\newtheorem{theorem}{Theorem}[section]
\newtheorem{lemma}[theorem]{Lemma}
\newtheorem{proposition}[theorem]{Proposition}
\newtheorem{corollary}{Corollary}[section]
\theoremstyle{definition}
\newtheorem{definition}{Definition}[section]
\theoremstyle{remark}
\newtheorem{remark}{Remark}[section]
\def\R{{\mathbb R}}
\def\N{{\mathbb N}}
\def\supp{\mathop{\rm supp}\nolimits}
\def\M{{\mathcal M}}
\newcommand{\dist}[2]{\Bigl\langle #1, #2 \Bigr\rangle}
\numberwithin{equation}{section}
\newcommand{\tendsto}[1]{\renewcommand{\arraystretch}{0.5}
\begin{array}[t]{c}
\longrightarrow \\
{ \scriptstyle #1 }
\end{array}
\renewcommand{\arraystretch}{1}}
\newcommand{\weaktendsto}[1]{\renewcommand{\arraystretch}{0.5}
\begin{array}[t]{c}
\rightharpoonup \\
{ \scriptstyle #1 }
\end{array}
\renewcommand{\arraystretch}{1}}
\begin{document}
\title[A Liouville property for the Camassa-Holm equation]{A Liouville  property with application to   asymptotic stability for the Camassa-Holm equation}

\subjclass[2010]{35Q35,35Q51, 35B40} 
\keywords{Camassa-Holm equation, asymptotic stability, peakon.}

\author[L. Molinet]{Luc Molinet}

\thanks{L.M. was partially supported by the french ANR project GEODISP}
\begin{abstract}
We prove a Liouville property for uniformly almost localized (up to translations) $ H^1$-global solutions of the Camassa-Holm equation with a momentum density that is a non negative finite measure.
More precisely, we show that  such solution has to be a  peakon. \\
As a consequence, we prove that peakons are asymptotically stable in the class of $ H^1$-functions with a momentum density that belongs to $ {\mathcal M}_+(\R) $.
Finally, we also  get an  asymptotic stability result for  train of peakons.
 \end{abstract}

\address{Luc Molinet, Institut Denis Poisson, Universit\'e de Tours, Universit\'e d'Orl\'eans, CNRS,  Parc Grandmont, 37200 Tours, France.}
\email{Luc.Molinet@univ-tours.fr}

\date{\today}

\maketitle

\section{Introduction}
\noindent The  Camassa-Holm equation (C-H),
\begin{equation}
u_t -u_{txx}=- 3 u u_x +2 u_x u_{xx} + u u_{xxx}, \quad
(t,x)\in\R^2, \label{CHCH}\\
\end{equation}
can be derived as a model for the propagation of unidirectional
shalow water waves over a flat bottom  by  writing the
Green-Naghdi equations in Lie-Poisson Hamiltonian form and then
making an asymptotic expansion which keeps the Hamiltonian
structure (\cite{CH1}, \cite{Johnson}). 
A rigorous derivation  of the Camassa-Holm equation from the full water waves problem is obtained in    \cite{AL} and  \cite{CL}.

(C-H) is completely integrable (see \cite{CH1},\cite{CH2},
\cite{C1} and \cite{CGI}) and enjoys also a geometrical derivation (cf. \cite{K1}, \cite{K2}). It possesses among  others the
following invariants
\begin{equation} 
M(v)= \int_{\R} (v-v_{xx}) \, dx , \; E(v)=\int_{\R} v^2(x)+v^2_x(x)
\, dx \mbox{ and } F(v)=\int_{\R} v^3(x)+v(x)v^2_x(x) \, dx\;
\label{E}
\end{equation}
and can be written in Hamiltonian form as
\begin{equation}
\partial_t E'(u) =-\partial_x F'(u) \quad .
\end{equation}
It is also worth noticing that \eqref{CH} can be rewritted as
\begin{equation}\label{CHy}
y_t +u y_x+2 u_x y =0 
\end{equation}
 which is a transport equation for the momentum density  $y=u-u_{xx} $.

Camassa and Holm \cite{CH1} exhibited peaked solitary waves
solutions to (C-H) that  are
 given by
$$ u(t,x)=\varphi_c(x-ct)=c\varphi(x-ct)=ce^{-|x-ct|},\; c\in\R^*.$$
They are called peakon whenever $ c>0 $ and antipeakon  whenever
$c<0$.  
Note that the initial value problem associated with (C-H) has to be rewriten as
\begin{equation}
\left\{ \begin{array}{l}
u_t +u u_x +(1-\partial_x^2)^{-1}\partial_x (u^2+u_x^2/2)=0\\
\label{CH} \; 
u(0)=u_0, 
\end{array}
\right.
\end{equation}
to give a meaning to these solutions.

Their stability seems not
to enter the general framework developed
 for instance in \cite{Benjamin}, \cite{GSS}, especially because of the non smoothness of the peakon. However, Constantin and
Strauss \cite{CS1} succeeded in proving their orbital stability by
a direct approach.  This approach is based on two  optimal inequalities: one involving $ E(u-\varphi) $ and  $\max_{\R}u $ and  the other one involving $ E(u) $, $F(u) $ and $\max_{\R} u $. 

In a series of papers (see for instance \cite{MM1}, \cite{MM2}) Martel and Merle developped  an approach, based on a Liouville property for uniformly almost localized  global solutions 
close to the solitary waves, to prove the asymptotic stability for a wide class of dispersive equations. The Liouville property is based on the study of a dual equation related to the linearized equation around the solitary waves. Such approach to prove the Liouville property seems not to be applicable for the C-H equation. Indeed, first working with the dual problem requires more regularity on the solution and, in contrast to  KdV-like  equations, one cannot require the asymptotic objects
 of the C-H equation to be smooth (see  the peakon !). Second, for the same reasons for which there is no proof of the orbital stability by the spectral method, it seems very difficult to  get a non negative property on the underlying linear operator.

 In this paper we prove a Liouville result for uniformly almost localized (up to translations) global solutions to the CH equation and then follows the general strategy developed by Martel and Merle to deduce the asymptotic stability of the peakon. The main ingredient to prove our Liouville result is   the finite speed  propagation of  the momentum density of the  solution. We would like to underline that our arguments are not specific to the Camassa-Holm equation but can be adapted for a wide class of equations with peakons as we will show in a forthcoming work.

In this paper we will work in the framework of the solutions constructed in \cite{CM1}.  This class corresponds to solutions emanating from initial data that belong to $ H^1(\R) $ with a density momentum that is a non negative finite measure. It has the advantage  to contain the peakon and to enjoy good properties as global existence, uniqueness and $ H^1$-continuity of the flow. 

  It is worth noticing that, recently\footnote{See for instance \cite{L} for a survey on previous existence results.}, Bressan and Constantin (\cite{BC1}, \cite{BC2}) succeed to  construct global conservative and dissipative solutions of the \eqref{CH} for initial data in $ H^1(\R) $ by using scalar conservation laws techniques. The uniqueness of the conservative solution has been shown very recently by Bressan-Chen-Zhang \cite{BCZ} but its continuity  with values in $ H^1(\R) $ is not known. In consequence, the orbital stability of the peakon with respect to these solutions is still an open problem. In this direction, note  that a Lipschitz metric on $ H^1 $-bounded sets has been very recently constructed in \cite{CCCS}.

Before stating our results let us introduce the function space to which  our initial data belong. Following \cite{CM1}, we introduce the following space of functions
\begin{equation}
Y=\{ u\in H^1(\R)    \mbox{ such that  }  u-u_{xx} \in {\mathcal M}(\R)  \} \; 
\end{equation}
where $ {\mathcal M}(\R) $ is the space of finite Radon measures on $ \R $.
We denote by $ Y_+ $ the closed subset of $ Y $ defined by  $Y_+=\{u\in Y \, /\, u-u_{xx}\in \M_+ \} $ where $ {\mathcal M}_+ $ is the set of non negative finite Radon measures on $ \R$.

Let $ C_b(\R)$  be the set of bounded continuous functions on $ \R $, $ C_0(\R)$  be  the set of continuous functions  on $ \R $ that tends to $ 0 $ at infinity and let $ I \subset \R$ be an interval.  
 A sequence $\{\nu_n\}\subset {\mathcal M} $ is said to converge tightly (resp. weakly) towards $ \nu\in {\mathcal M} $ if for any $ \phi\in C_b(\R) $ (resp. $C_0(\R)$), $ \langle \nu_n,\phi\rangle \to
  \langle \nu,\phi\rangle $. We will then write $ \nu_n  \rightharpoonup \! \ast \; \nu $ tightly  in $ \M $ (resp. $ \nu_n  \rightharpoonup \! \ast \; \nu $ in $\M$).
 
 Throughout this paper, $ y\in C_{ti}(I;\M) $ (resp.   $ y\in C_{w}(I;\M) $) will signify that for any $ \phi\in C_b(\R) $
  (resp. $\phi\in C_0(\R)$) , 
 $ t\mapsto \dist{y(t)}{\phi} $ is continuous on $ I$ and $ y_n \rightharpoonup \! \ast \; y $ in $ C_{ti}(I;\M) $ (resp. $ y_n \rightharpoonup \! \ast \; y $ in $ C_{w}(I;\M) $) will signify that for any $ \phi\in C_b(\R) $ (resp. $C_0(\R)$),
 $ \dist{y_n(\cdot)}{\phi}\to \dist{y(\cdot)}{\phi} $ in $ C(I)$. 

\begin{definition}\label{defYlocalized}
 We say that  a solution $ u \in C(\R; H^1(\R)) $ with $ u-u_{xx}\in C_{w}(\R; \M_+) $ of \eqref{CH} is $ Y$-almost localized if there exist $ c>0 $ and a $ C^1 $-function $ x(\cdot) $, with $ \dot{x}\ge c>0 $,  for which for any $ \varepsilon>0 $, there exists $ R_{\varepsilon}>0 $ such that for all $ t\in\R $ and all $ \Phi\in C(\R) $ with $0\le \Phi\le 1 $ and $ \supp \Phi \subset [-R_\varepsilon,R_\varepsilon]^c $.
  \begin{equation}\label{defloc}
 \int_{\R} (u^2(t)+u_x^2(t))  \Phi(\cdot-x(t)) \, dx + \Bigl\langle  \Phi(\cdot-x(t)), u(t)-u_{xx}(t)\Bigr\rangle \le \varepsilon \; .
\end{equation}
 \end{definition}

\begin{theorem}\label{liouville}
 Let $ u \in C(\R; H^1(\R)) $,  with $ u-u_{xx}\in C_{w}(\R; \M_+) $, be a $ Y$-almost localized solution of \eqref{CH} that is not identically vanishing. Then there exists $ c^*	>0 $ and $ x_0\in \R $ such that  $$
 u(t)=c^* \, \varphi(\cdot -x_0-c^* t) , \quad \forall t\in \R \, .
 $$
\end{theorem}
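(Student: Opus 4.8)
The plan is to follow the Martel--Merle philosophy but to replace the spectral/dual-problem analysis by arguments relying only on the transport of the momentum density and on local conservation of energy, so as to stay inside the low-regularity class $Y_+$. Write $g=\frac12 e^{-|\cdot|}$ for the kernel of $(1-\partial_x^2)^{-1}$. Any solution in our class satisfies $u=g*y$ with $y=u-u_{xx}\in\M_+$, whence $0\le u\le \frac12\|y\|_{\M}=\frac12 M$ and $u\to0$ at $\pm\infty$, where $M=\dist{y(t)}{1}$ is conserved. Moreover $y$ obeys the transport equation \eqref{CHy}, so along the characteristics $\dot q=u(t,q)$ one has $y(t,q)\,q_x^2=y_0$; in particular $q(t,\cdot)$ is an increasing family of homeomorphisms with speed in $[0,M/2]$. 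Using the conservation of $E$ and $M$ together with the uniform localization \eqref{defloc}, I would first show that $\{u(t,\cdot+x(t))\}_{t\in\R}$ is relatively compact in $H^1(\R)$ and that $\{y(t,\cdot+x(t))\}_{t\in\R}$ is relatively compact for the tight topology of $\M_+$ (no mass escapes by \eqref{defloc}, and local compactness comes from the $H^1$-bound and the representation $u=g*y$).

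The engine of the proof is an almost-monotonicity property in the moving frame. Differentiating \eqref{CH} gives the local energy identity $\partial_t(u^2+u_x^2)+\partial_x G=0$ with $G=2uP+uu_x^2$ and $P=g*(u^2+u_x^2/2)$. For a smooth nondecreasing weight $\psi$ with $\psi(-\infty)=0$ and $\psi(+\infty)=1$, I would study $\mathcal I(t)=\int (u^2+u_x^2)(t)\,\psi(\cdot-x(t)-R)\,dx$, whose derivative equals $\int\big(G-\dot x\,(u^2+u_x^2)\big)\,\psi'(\cdot-x(t)-R)\,dx$. Since $\psi'$ is supported far to the right of the bulk, there $u$ is exponentially small and $G\le \dot x\,(u^2+u_x^2)$ (using $\dot x\ge c>0$), so $\mathcal I$ is nonincreasing up to an error exponentially small in $R$ coming from \eqref{defloc}. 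For the momentum I would argue directly with the flow: because $y\ge0$ is transported along $\dot q=u(t,q)$ with speed in $[0,M/2]$ while the center advances at speed $\ge c$, no portion of $y$ can lag behind $x(t)$ without eventually leaving every window $[x(t)-R,x(t)+R]$, which \eqref{defloc} forbids. This finite-speed dichotomy is the decisive mechanism.

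Since $u$ is global and uniformly localized, the monotone quantities above are bounded, so their variation over $\R$ is finite; integrating the signed derivatives in time forces the instantaneous energy flux through the moving frame to tend to $0$ along a sequence $t_n$. Extracting limits with the compactness of the first step, I obtain an asymptotic profile that is a global solution in $Y_+$ for which the energy flux across every moving line vanishes identically, i.e. a genuine traveling wave with momentum in $\M_+$. Invoking the classification of decaying traveling waves of \eqref{CH} with nonnegative momentum density, the only possibilities are the peakons $c^*\varphi(\cdot-c^*t-x_0)$, for which $y=2c^*\delta$ and $c^*=M/2$. Finally I would upgrade this to the original solution: the monotonicity shows the localized energy has in fact no variation, so the flux vanishes for $u$ itself and not merely in the limit, whence $u$ equals its traveling-wave profile for every $t$; conservation of $M$ fixes $c^*=M/2$ and the speed condition $\dot x\equiv c^*$ yields $x(t)=x_0+c^*t$.

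The hard part will be the rigidity step carried out entirely within $\M_+$. Because the peakon is not smooth one cannot linearize, so the identification of the asymptotic object as an exact traveling wave — and the exclusion of spread-out or multi-peakon profiles — must be extracted solely from the transport of $y$, the sign of the energy flux, and the finite speed of propagation. Correspondingly, the delicate technical heart is to make the monotonicity estimate $G\le\dot x\,(u^2+u_x^2)$ and the companion control of the localized momentum rigorous when the momentum is only a nonnegative measure.
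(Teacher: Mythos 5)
Your preliminary ingredients --- the almost monotonicity of the localized energy/momentum in the moving frame, the finite propagation speed of $y$ along the characteristics $\dot q = u(t,q)$, and compactness of the orbit --- do correspond to Section 3 and the start of Section 4 of the paper. But the rigidity core of your argument has a genuine gap, precisely at the point you flag as ``the hard part''. First, knowing that $\frac{d}{dt}\mathcal I(t)\to 0$ along a sequence $t_n$ (for one weight $\psi$ and one $R$) and extracting a limit does not produce ``a global solution for which the energy flux across every moving line vanishes identically'': the limit object is merely another $Y$-almost localized solution, and vanishing of a single localized flux along one sequence of times is far too weak to force it to be a traveling wave. Second, and worse, the ``upgrade'' step is unjustified: the monotonicity you have is only \emph{almost} monotonicity, with errors of order $e^{-R/6}$ that nothing in your proposal removes, so you cannot conclude that ``the localized energy has in fact no variation''. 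Even if you proved that $u(t_n,\cdot+x(t_n))\to c^*\varphi$ strongly in $H^1$, that would not make $u$ itself a peakon: Theorem \ref{asympstab} exhibits non-peakon solutions converging to peakons as $t\to+\infty$. The Liouville property is exactly the assertion that this cannot happen for solutions that are uniformly almost localized for \emph{all} $t\in\R$, so it cannot be obtained by an argument that never exploits more than convergence along a sequence of times; as written, the step is essentially circular.

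The mechanism the paper actually uses is different and is absent from your proposal. Backwards-in-time characteristics plus the localization give that $\supp y(t,\cdot+x(t))\subset(-\infty,r_0]$ with a lower bound on $u$ at the right edge (Proposition \ref{pro3}); the edge $q^*(t)=x(t)+x_+(t)$ is a characteristic (Lemma \ref{lemmaq}); and the jump $a(t)=u_x(t,q^*(t)-)-u_x(t,q^*(t)+)$ is a bounded, non-decreasing function with $a'(t)=\frac12\bigl(u^2-u_x^2\bigr)(t,q^*(t)-)\ge 0$, by the pointwise inequality $|u_x|\le u$ valid on $Y_+$ (Proposition \ref{propa}). Applying the symmetry $(t,x)\mapsto(-t,-x)$ yields the analogous monotone jump at the left edge of $\supp y$, and comparing the four limits at $t\to\pm\infty$ --- if they were not all equal, the two edges would separate linearly in time, contradicting the boundedness of the support relative to $x(t)$ --- forces $a$ to be constant, hence $a'\equiv 0$, i.e.\ saturation of $|u_x|\le u$ at the edge. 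Via the representation $u=\frac12 e^{-|\cdot|}\ast y$, this saturation forces $y$ to be a single Dirac mass, so $u$ is a peakon; no classification theorem for traveling waves is needed, because the traveling-wave structure and the peakon profile are obtained simultaneously from this jump analysis. This is the ingredient your proposal lacks, and without it (or an equivalent rigidity mechanism) the proof does not go through.
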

\begin{remark}
This theorem implies, in particular, that a $Y$-almost localized solution with non negative momentum density cannot be smooth for any time. More precisely, 
 if $ u\in C(\R;H^1) $, with $ u-u_{xx}\in C_{w}(\R; \M_+) $, is  a  $ Y$-almost localized solution of the Camassa-Holm equation that   belongs  to $ H^{\frac{3}{2}}(\R) $ for some $ t\in \R $ then $ u $  must be the trivial null solution.
\end{remark}

As a consequence we get the asymptotic stablity of the peakons :
\begin{theorem} \label{asympstab} 
Let $ c>0 $ be fixed. There exists a  universal constant $0<\eta_0\ll 1  $ such that for any $ 0<\theta<c $ and any $ u_0\in Y_+ $ satisfying 
\begin{equation}\label{difini}
\|u_0-\varphi_c \|_{H^1} \le \eta_0 \Bigl(\frac{\theta}{c}\Bigr)^{8}\; ,
\end{equation}
there exists $ c^*>0 $ with $ |c-c^*|\ll c $ and a $C^1$-function $ x \, : \, \R \to \R $ 
 with $ \displaystyle\lim_{t\to \infty} \dot{x}=c^* $  such that
\begin{equation}
u(t,\cdot+x(t)) \weaktendsto{t\to +\infty} \varphi_{c^*} \mbox{ in } H^1(\R) \; ,
\end{equation}
where $ u\in C(\R; H^1) $ is the solution emanating from $ u_0 $.
Moreover,
\begin{equation}\label{cvforte}
\lim_{t\to +\infty} \|u(t) -\varphi_{c^*}(\cdot-x(t))\|_{H^1(]\theta t ,+\infty[)}=0 \; .
\end{equation}
\end{theorem}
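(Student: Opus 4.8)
The plan is to follow the now-classical Martel--Merle roadmap: feed the rigidity result of Theorem \ref{liouville} with an asymptotic object produced by weak compactness, using the orbital stability of Constantin--Strauss \cite{CS1} and the well-posedness/continuity framework of \cite{CM1} as the functional backbone. By the scaling symmetry $u(t,x)\mapsto\lambda u(\lambda t,x)$, which sends $\varphi_c$ to $\varphi_{\lambda c}$, one may normalise $c=1$ while keeping track of the $\theta/c$ dependence. First I would invoke \cite{CS1} so that \eqref{difini} forces $u(t)$ to remain close in $H^1$ to the orbit $\{\varphi_c(\cdot-\xi)\}_\xi$ for all $t$, with closeness controlled by the right-hand side of \eqref{difini}. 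Near the peakon I set up a modulation by tracking the (now unique) maximum point of $u(t,\cdot)$, defining a $C^1$ translation $x(\cdot)$ and a local scale $c(\cdot)$ so that $v(t):=u(t,\cdot+x(t))$ satisfies a normalisation adapted to the non-smooth profile; differentiating in time and using \eqref{CH} yields $\dot x=c(t)+o(1)\approx c$ and $\dot c=o(1)$, hence $\dot x\ge c'>0$, which is exactly the datum required by Definition \ref{defYlocalized}.

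The technical heart is an almost-monotonicity property: the portion of the conserved energy $E$ lying to the right of a point moving at a speed between $\theta$ and $c$ is almost non-decreasing in time, up to errors of order $e^{-\theta t}$. Choosing a smooth increasing cut-off $\Psi$ and a point $\rho(t)$ with $\dot\rho\in(\theta,c)$, I would set $I(t)=\int_\R(u^2+u_x^2)(t)\,\Psi(\cdot-\rho(t))\,dx$, differentiate in $t$, and exploit the two structural features of \eqref{CH} that survive the low regularity: the sign $u-u_{xx}\ge0$ and the finite speed of propagation of the momentum density $y=u-u_{xx}$ encoded by the transport equation \eqref{CHy}. Together they force the flux through the cut-off to have the right sign modulo exponentially small tails.

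With this in hand, I would take $t_n\to+\infty$: the sequence $v(t_n)$ is bounded in $H^1$ and $y(t_n,\cdot+x(t_n))$ is bounded in $\M_+$, so extracting weak-$H^1$ and weak-$*$ limits gives $\tilde u_0\in Y_+$ with $\tilde u_0(0)=\lim_n v(t_n,0)$ close to $c$ (local uniform convergence from the $H^1$ bound), hence $\tilde u_0$ nontrivial. The $C_w(\R;\M_+)/H^1$ continuity of the flow from \cite{CM1} propagates this to a global solution $\tilde u$ with $u(t_n+\cdot,\cdot+x(t_n))\rightharpoonup\tilde u$, and the monotonicity above, being uniform in $n$, transfers to $\tilde u$ the localisation estimate \eqref{defloc}, so $\tilde u$ is $Y$-almost localised with $\dot{\tilde x}\ge c'>0$. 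Theorem \ref{liouville} then yields $\tilde u(t)=c^*\varphi(\cdot-x_0-c^*t)$ with $|c-c^*|\ll c$ by conservation of $E$. Because the energy trapped to the right stabilises, $c^*$ is independent of the sequence, giving $v(t)\rightharpoonup\varphi_{c^*}$ weakly in $H^1$; combining this weak convergence with the convergence of the localised $E$-mass coming from the monotonicity upgrades it to the strong statement \eqref{cvforte} on $]\theta t,+\infty[$.

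The main obstacle is the monotonicity step together with its transfer to $\tilde u$: proving the localised almost-monotonicity and deducing the almost-localisation of $\tilde u$ in the very weak $Y_+$-topology. The low regularity---the momentum density is merely a measure and $\varphi$ is not $C^1$---blocks the dual-problem/spectral route of Martel--Merle, so the sign $u-u_{xx}\ge0$ and the transport structure \eqref{CHy} must carry the whole argument; propagating the explicit constants through these estimates is precisely what produces the $(\theta/c)^8$ threshold in \eqref{difini}.
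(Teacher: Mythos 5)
Your overall roadmap (orbital stability $\Rightarrow$ modulation $\Rightarrow$ almost monotonicity $\Rightarrow$ weak compactness $\Rightarrow$ Liouville $\Rightarrow$ rigidity of the limit) is indeed the paper's strategy, but two of your key steps contain genuine gaps.

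First, the modulation. You propose to define $x(t)$ by tracking the maximum point of $u(t,\cdot)$ and to obtain $\dot x$ by "differentiating in time". This fails precisely because of the non-smoothness that makes the whole problem hard: for $u(t)\in Y_+$ close to a peakon, $u_x(t,\cdot)$ has a jump at the crest, $u(t,\cdot)$ is not $C^1$ in $x$, so the implicit function theorem cannot be applied to $\partial_x u(t,x(t))=0$, and the maximum point is in general not a differentiable function of $t$. The paper is explicit that this is a real obstruction (see the footnote to Lemma \ref{modulation} concerning the gap in \cite{EL2}) and circumvents it by imposing the \emph{mollified} orthogonality condition \eqref{ort}, $\int_\R u(t)\,(\rho_{n_0}\ast\varphi')(\cdot-x(t))=0$, to which the implicit function theorem does apply; this is what yields $x\in C^1(\R)$ with \eqref{estc}. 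The $C^1$-regularity and the lower bound $\dot x\ge c_0>0$ are not cosmetic: they are required by Definition \ref{defYlocalized}, by the monotonicity lemma, by the uniqueness argument that pins the translation of the asymptotic profile (via \eqref{unic}), and by the conclusion $\dot x\to c^*$.

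Second, the monotonicity functional. You take $I(t)=\int_\R(u^2+u_x^2)(t)\,\Psi(\cdot-\rho(t))\,dx$, i.e.\ energy only, and then claim that this monotonicity transfers the localization estimate \eqref{defloc} to the asymptotic object $\tilde u$. But \eqref{defloc} — the hypothesis of Theorem \ref{liouville} — also demands smallness of $\bigl\langle \Phi(\cdot-x(t)),y(t)\bigr\rangle$, i.e.\ uniform tightness of the momentum \emph{measure}. Energy localization does not imply momentum localization in $Y_+$: a fixed amount of momentum split into many small, widely spaced atoms carries arbitrarily little $H^1$-energy, so your functional cannot rule out momentum escaping while the energy stays localized. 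This is why the paper's Lemma \ref{almostdecay} works with the combined density $u^2+u_x^2+\gamma y$, and why the restriction $0\le\gamma\le\frac{3}{2}(1-\alpha)c_0$ appears: the signless flux term $\frac{\gamma}{2}\int(u^2-u_x^2)\Psi'$ produced by the transport equation \eqref{CHy} must be absorbed by the energy flux term. (Also, your monotonicity is stated in the wrong direction: along a reference point moving slower than $x(t)$, the right-localized quantity is almost non-\emph{increasing} forward in time, its left-localized counterpart almost non-decreasing; it is this pair of inequalities, for the coupled functional, that drives Proposition \ref{propasym}.) As written, your argument cannot certify that $\tilde u$ satisfies the hypotheses of the Liouville theorem, so the rigidity step cannot be invoked.
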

\begin{remark} Using that \eqref{CH} is invariant by the change of unknown $ u(t,x)\mapsto -u(t,-x) $, we obtain as well the asymptotic stability of the antipeakon profile $ c \varphi $ with $ c<0 $ in the class of $ H^1$-function with a momentum density that belongs to $ \M_-(\R) $.
\end{remark}
\begin{remark} This theorem implies the growth of the  high Sobolev norms for some smooth solutions of the Camassa-Holm equation. 
Indeed, it is proven in    \cite{CE1} that any initial datum $ u_0 \in H^\infty(\R) \cap Y_+ $ gives rise to a solution $u \in C(\R;H^\infty(\R)) $ and the above theorem
 ensures that if such initial datum satisfies
 \eqref{difini},  then $ \|u(t)\|_{H^{3/2}} \to +\infty $ as $ t \to +\infty $.
\end{remark}
\begin{remark} Theorem \ref{asympstab}  and especially Theorem \ref{asympt-mult-peaks}, in Section \ref{6},  are first steps towards the peakons decomposition of solutions with a non negative momentum density that is studied by the inverse scattering approach in \cite{eck}.
\end{remark}
This paper is organized as follows : in the next section we recall the well-posedness results for the class of solutions we will work with and, in Section 3, we derive an almost monotonicity result that implies an  exponential decay result for $ Y$-almost localized global solutions. Section 4 is devoted to the Liouville theorem for $ Y$-almost localized solutions which is the heart of this work. Finally, in Section 5 and 6, we respectively prove the asymptotic stability of a single peakon and of a train of peakons. 
\section{Global well-posedness results} 
We first recall some obvious estimates that will be useful in the sequel of this paper.  
Noticing that $ p(x)=\frac{1}{2}e^{-|x|} $ satisfies $p\ast y=(1-\partial_x^2)^{-1} y $ for any $y\in H^{-1}(\R) $ we easily get
$$
\|u\|_{W^{1,1}}=\|p\ast (u-u_{xx}) \|_{W^{1,1}}\lesssim \| u-u_{xx}\|_{\M}
$$
and 
$$
\|u_{xx}\|_{\M}\le \|u\|_{L^1}+ \|u-u_{xx} \|_{\M} 
$$
 which ensures  that 
\begin{equation} \label{bv}
Y\hookrightarrow  \{ u\in W^{1,1}(\R) \mbox{ with } u_x\in {\mathcal BV}(\R) \} \; .
\end{equation}
 It is also worth noticing that since for $ v\in C^\infty_0(\R) $, 
$$
v(x)=\frac{1}{2} \int_{-\infty}^x e^{x'-x} (v-v_{xx})(x') dx' +\frac{1}{2} \int_x^{+\infty} e^{x-x'} (v-v_{xx})(x') dx'
$$
and 
$$
v_x(x)=-\frac{1}{2}\int_{-\infty}^x e^{x'-x} (v-v_{xx})(x') dx' + \frac{1}{2}\int_x^{+\infty} e^{x-x'} (v-v_{xx})(x') dx' \; ,
$$
we get $ v_x^2 \le v^2 $ as soon as $ v-v_{xx} \ge 0 $ on $ \R $. By the density of $ C^\infty_0(\R) $ in $ Y $, we deduce that 
\begin{equation}\label{dodo}
 |v_x|\le v \mbox { for any } v\in Y_+ \; .
\end{equation}
Finally, throughout this paper, we will denote $ \{\rho_n\}_{n\ge 1} $ the mollifer defined by 
\begin{equation} \label{rho}
\rho_n=\Bigl(\int_{\R} \rho(\xi) \, d\xi 
\Bigr)^{-1} n \rho(n\cdot ) \mbox{ with } \rho(x)=\left\{ 
 \begin{array}{lcl} e^{1/(x^2-1)} & \mbox{for} & |x|<1 \\
0 & \mbox{for} & |x|\ge 1 
\end{array}
\right.
\end{equation}
In \cite{CE1}  a global well-posedness result is shown for smooth solution to \eqref{CH} with a non negative momentum density. This result can be summarized in the following proposition 
 \begin{proposition}{(Global smooth solutions \cite{CE1})}\label{smoothWP} \\
 Let $ u_0\in H^3(\R)$, satisfying $y_0= u_{0}-u_{0,xx}\ge 0 $ with $ y_0\in L^1(\R) $, then the initial value problem associated with \eqref{CH}  has a unique solution 
$u\in C(\R;H^3(\R))\cap C^1(\R;H^2(\R)) $. This solution satisfies  $y=u-u_{xx} \ge 0 $ on $ \R^2$ and  $ M(\cdot) $, $E(\cdot) $ and $ F(\cdot) $ are constant along the trajectory
 \footnote{In particular, $ y\in L^\infty(\R;L^1(\R)) $.}.  If moreover $ u_0\in H^\infty(\R) $ then $u\in C(\R;H^\infty(\R)) $.
\end{proposition}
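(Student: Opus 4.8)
The plan is to prove this by the classical program for quasilinear dispersive equations: first a local existence, uniqueness and continuation theory in $H^3$ (in fact in any $H^s$, $s>3/2$), then an a priori bound coming from the sign of the momentum density that upgrades the local solution to a global one. Throughout I would use the nonlocal form \eqref{CH}, writing $u_t=-uu_x-\partial_x p\ast(u^2+u_x^2/2)$ with $p(x)=\frac12 e^{-|x|}$, so that $p\ast\,\cdot\,=(1-\partial_x^2)^{-1}$ is smoothing of order $-2$ and $\partial_x p\ast$ of order $-1$.

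For the local theory I would apply $J^s=(1-\partial_x^2)^{s/2}$ to the equation and pair with $J^s u$ in $L^2$. The only genuinely quasilinear term is $uu_x$: writing $J^s(uu_x)=uJ^su_x+[J^s,u]u_x$, the leading piece integrates by parts to $\tfrac12\int u_x(J^su)^2$, the commutator is controlled by the Kato--Ponce estimate $\|[J^s,u]u_x\|_{L^2}\lesssim\|u_x\|_{L^\infty}\|u\|_{H^s}$, and the smoothing term is handled by a Moser product bound $\|\partial_x p\ast(u^2+u_x^2/2)\|_{H^s}\lesssim(\|u\|_{L^\infty}+\|u_x\|_{L^\infty})\|u\|_{H^s}$. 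This yields
\begin{equation}
\frac{d}{dt}\|u\|_{H^s}^2\lesssim\bigl(\|u\|_{L^\infty}+\|u_x\|_{L^\infty}\bigr)\|u\|_{H^s}^2 ,
\end{equation}
which, after a standard regularization of the transport term (Friedrichs mollifiers or a parabolic term $\varepsilon u_{xx}$, uniform estimates, and passage to the limit), produces a unique maximal solution $u\in C([0,T^*);H^s)$ together with the continuation criterion that the solution is global as soon as $\int_0^T(\|u\|_{L^\infty}+\|u_x\|_{L^\infty})\,dt<\infty$ for every finite $T$. Uniqueness and $H^1$-continuity of the flow follow by running the same estimate on the difference of two solutions in a lower norm, and reading $u_t$ off the equation then gives $u\in C^1(\R;H^2)$.

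The heart of the global step is the sign of $y=u-u_{xx}$. Introducing the flow $q(t,\cdot)$ defined by $\partial_t q=u(t,q)$, $q(0,x)=x$ --- well defined and an increasing diffeomorphism of $\R$ since $u(t,\cdot)\in H^3\hookrightarrow C^2_b$ is Lipschitz, whence $q_x(t,x)=\exp\bigl(\int_0^t u_x(s,q(s,x))\,ds\bigr)>0$ --- a direct computation from the transport form \eqref{CHy} gives $\frac{d}{dt}\bigl(y(t,q)\,q_x^2\bigr)=0$, so that $y(t,q(t,x))\,q_x(t,x)^2=y_0(x)\ge0$. As $q_x>0$ and $q(t,\cdot)$ is onto, $y(t,\cdot)\ge0$ on $\R$ for all $t$. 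Consequently $u=p\ast y\ge0$ because $p\ge0$, and since $u(t)$ satisfies exactly $u-u_{xx}\ge0$, the pointwise inequality \eqref{dodo} gives $|u_x|\le u$ on $\R$.

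These ingredients close the argument. Conservation of $E$ follows by multiplying the equation by $y$ and integrating by parts; with $E$ conserved, the one-dimensional Sobolev inequality $\|u\|_{L^\infty}\le\frac1{\sqrt2}\|u\|_{H^1}=\frac1{\sqrt2}\sqrt{E}$ together with $|u_x|\le u$ yields the uniform-in-time bound $\|u_x\|_{L^\infty}\le\|u\|_{L^\infty}\le\frac1{\sqrt2}\sqrt{E(u_0)}$, making the integrand in the continuation criterion bounded, hence the solution global. Conservation of $M$ follows by integrating \eqref{CHy} in $x$ (the flux terms vanish by decay), and of $F$ by an analogous longer integration by parts reflecting the Hamiltonian structure $\partial_tE'(u)=-\partial_xF'(u)$; positivity of $y$ and conservation of $M$ give $\|y(t)\|_{L^1}=\int_\R y(t)=M(u_0)$, the footnote bound $y\in L^\infty(\R;L^1)$. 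Persistence of regularity is then the same energy inequality in $H^s$: since $\|u\|_{L^\infty}+\|u_x\|_{L^\infty}$ is globally bounded, Gr\"onwall keeps $\|u(t)\|_{H^s}$ finite on every finite interval for each $s\ge3$, so $u_0\in H^\infty$ gives $u\in C(\R;H^\infty)$. The two delicate points are the rigorous justification of the quasilinear energy estimate (the commutator and approximation scheme for the nondispersive transport term) and the verification that the characteristic flow is a global diffeomorphism, which is precisely what propagates the sign of $y_0$ to all of $\R$; once \eqref{dodo} and conservation of $E$ are available, the conceptual core --- a uniform bound on $\|u_x\|_{L^\infty}$ forcing global existence --- is short.
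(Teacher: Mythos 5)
Your proposal is correct and follows essentially the same route as the source the paper cites for this statement (the paper itself gives no proof, quoting the result from Constantin--Escher \cite{CE1}): the core is identical, namely propagation of the sign of $y$ via $y(t,q(t,x))q_x(t,x)^2=y_0(x)$ along the characteristic flow, the resulting pointwise bound $|u_x|\le u$ as in \eqref{dodo}, and conservation of $E$, which together defeat the blow-up/continuation criterion and also yield the $L^1$ bound on $y$ from conservation of $M$. The only cosmetic difference is in the local theory, where \cite{CE1} invokes Kato's quasilinear semigroup framework while you run a mollified energy scheme with the Kato--Ponce commutator estimate; both are standard and interchangeable for this purpose.
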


Unfortunately, the peakons do not enter in this framework since their profiles do not belong  even to $ H^{\frac 3 2}(\R) $. In \cite{CM1} an existence and uniqueness result of global solutions to \eqref{CH} in  a class of functions that contains the peakon is proved. This result will be crucial in our analysis. We give below a slightly improved version:
 \begin{proposition}{(Global weak solution \cite{CM1})}\label{WP} \\
 Let $ u_0 \in Y_+ $ be given. \vspace*{2mm} \\
 {\bf 1. Uniqueness and global existence :} \eqref{CH} has a unique solution $ u\in C^1(\R; L^2(\R))\cap C(\R;H^1(\R)) $ such that 
  $ y=(1-\partial_x^2)u \in C_{ti}(\R; \M_+) $. Moreover, $E(u) $ $ F(u) $ and $ M(u)=\dist{y}{1} $ are conservation laws . \vspace*{2mm} \\
  {\bf 2. Continuity with respect to initial data  in $H^1(\R)$}: For any sequence $ \{u_{0,n}\} $ bounded in $ Y_+ $ such that $ u_{0,n} \to u_0 $ in $ H^1(\R ) $
   and $ (1-\partial_x^2)u_{0,n}   \rightharpoonup  \! \ast  u_0-u_{0,xx}$ tightly in $ \M$,   the emanating sequence of solution $ \{u_n\} \subset  C^1(\R_+;L^2(\R))\cap C(\R_+;H^1(\R)) $ satisfies for any $ T>0 $
  \begin{equation}\label{cont1}
  u_n \to u \mbox{ in } C([-T,T]; H^1(\R))  \end{equation}
  and 
    \begin{equation}\label{cont2}
 (1-\partial_x^2) u_n  \rightharpoonup  \! \ast \; y \mbox{ in } C_{ti} ([-T,T], \M) \; . 
   \end{equation}
       {\bf 3. Continuity with respect to initial data  in $Y$ equipped with the weak topology}: For any sequence $ \{u_{0,n}\} \subset Y_+ $ such that\footnote{By this we mean that $ u_{0,n} 
     \rightharpoonup u_0$ in $ H^1(\R)     $ and $ (1-\partial_x^2) u_{0,n}  \rightharpoonup \! \ast  \; u_0-u_{0,xx}  $ in $ \M$} $ u_{0,n} \rightharpoonup \! \ast \; u_0 $ in $ Y $,  the emanating sequence of solution $ \{u_n\} \subset  C^1(\R;L^2(\R))\cap C(\R_+;H^1(\R)) $ satisfies for any $ T>0 $,
  \begin{equation}\label{weakcont}
  u_n \weaktendsto{n\to\infty} u \mbox{ in } C_{w}([-T,T]; H^1(\R) ) \; ,
  \end{equation}
  and
   \begin{equation}\label{weakcont2}
 (1-\partial_x^2) u_n  \rightharpoonup  \! \ast \; y \mbox{ in } C_{w} ([-T,T], \M)  \; . 
   \end{equation}
  \end{proposition}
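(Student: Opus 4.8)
The plan is to realize the solution as a limit of the smooth solutions furnished by Proposition~\ref{smoothWP}, using the invariants together with the sign of the momentum density to produce the uniform bounds that drive the compactness; uniqueness is treated separately, and for the two continuity statements the new ingredient is to upgrade weak convergence to strong (resp.\ tight) convergence by means of the conserved quantities $ E $ and $ M $. The construction itself goes as follows. Given $ u_0\in Y_+ $, I would regularize by setting $ y_{0,n}=\rho_n\ast(u_0-u_{0,xx}) $ and $ u_{0,n}=p\ast y_{0,n} $; since the mollifier $ \rho_n $ of \eqref{rho} is non negative, $ y_{0,n}\ge 0 $, so that $ u_{0,n}\in Y_+\cap H^\infty $ with $ u_{0,n}\to u_0 $ in $ H^1 $ and $ y_{0,n}\rightharpoonup\!\ast\,(u_0-u_{0,xx}) $ tightly in $ \M $. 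Proposition~\ref{smoothWP} then provides global smooth solutions $ u_n $ with $ y_n=u_n-u_{n,xx}\ge 0 $. The essential bounds come from the invariants: $ E(u_n)=\|u_n\|_{H^1}^2 $ is constant, so $ \{u_n\} $ is bounded in $ L^\infty(\R;H^1) $, and because $ y_n\ge 0 $ one has $ \|y_n(t)\|_{\M}=\dist{y_n(t)}{1}=M(u_n) $, also conserved, whence $ \{y_n\} $ is bounded in $ L^\infty(\R;\M_+) $. Reading $ \partial_t u_n $ off \eqref{CH} and using $ H^1\hookrightarrow L^\infty $ with the smoothing of $ p\ast\cdot $ gives a uniform bound on $ \partial_t u_n $, hence equicontinuity in time.

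Combining the uniform $ H^1 $-bound, the time-equicontinuity and local compactness in space, I would extract (Arzel\`a--Ascoli together with weak-$ \ast $ compactness of bounded subsets of $ \M $) a subsequence with $ u_n\rightharpoonup u $ in $ C_w([-T,T];H^1) $, $ u_n\to u $ in $ C([-T,T];L^2_{loc}) $ and $ y_n\rightharpoonup\!\ast\,y $ in $ C_w([-T,T];\M) $. The quadratic terms pass to the limit since $ u_n^2\to u^2 $ in $ L^1_{loc} $ and $ u_{n,x}\rightharpoonup u_x $ in $ L^2 $, while the non-local term is controlled by the kernel $ p $; thus $ u $ solves \eqref{CH}.

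\emph{Uniqueness} I expect to be the main obstacle, and it is where one must go beyond soft compactness; this is the part already obtained in \cite{CM1}. Because $ u(t)\in Y_+ $ satisfies $ |u_x|\le u $ by \eqref{dodo}, $ u(t,\cdot) $ is Lipschitz, so the flow $ q(t,\xi) $ given by $ \partial_t q=u(t,q) $, $ q(0,\xi)=\xi $, is well defined. The transport equation \eqref{CHy} then shows, at least formally, that $ y(t,q(t,\xi))\,q_\xi(t,\xi)^2 $ is propagated along characteristics; using this, the positivity of $ y $ (so that $ y(t) $ is the weighted push-forward of $ y_0 $ by $ q(t,\cdot) $) recasts the dynamics as a closed system of ODEs in a suitable Banach space, on which a Gronwall argument shows that two solutions issued from the same datum coincide. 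The delicate point is that only $ u\in W^{1,\infty} $ is available, which is exactly enough for the flow and the estimate to make sense.

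Finally the continuity statements. Part~3 follows from the previous steps by a compactness-plus-uniqueness argument: a weak-$ \ast $ convergent, $ Y_+ $-bounded sequence of data produces, along any subsequence, a solution that by uniqueness must be the solution emanating from $ u_0 $, so the whole sequence converges, giving \eqref{weakcont}--\eqref{weakcont2}. The strong statement, Part~2, is the sharper improvement. Once $ u_n\rightharpoonup u $ in $ C_w([-T,T];H^1) $ is known, I would invoke conservation of energy: for each fixed $ t $, $ \|u_n(t)\|_{H^1}^2=E(u_{0,n})\to E(u_0)=\|u(t)\|_{H^1}^2 $, and weak convergence together with convergence of norms forces $ u_n(t)\to u(t) $ in $ H^1 $, the time-equicontinuity promoting this to convergence in $ C([-T,T];H^1) $, which is \eqref{cont1}. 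For \eqref{cont2} I would use the analogous fact for non negative measures: weak-$ \ast $ convergence of $ y_n(t) $ with $ \|y_n(t)\|_{\M}=M(u_{0,n})\to M(u_0)=\|y(t)\|_{\M} $ forbids any escape of mass to infinity and hence upgrades weak-$ \ast $ to tight convergence, uniformly in $ t $. The interplay of the sign of $ y $ with the conservation of $ E $ and $ M $ is precisely what yields the tight and strong continuity.
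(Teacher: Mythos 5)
Your overall architecture (mollify the datum, invoke Proposition \ref{smoothWP}, extract limits by Arzel\`a--Ascoli and weak-$\ast$ compactness, identify the limit through the uniqueness of \cite{CM1}, then upgrade weak to strong or tight convergence via the conserved quantities) is the same as the paper's, and your treatment of \eqref{cont1} (weak convergence plus conservation of $E$ forces strong $H^1$ convergence) and of Part 3 (compactness plus uniqueness) agrees with what the paper does. But there is a genuine gap at exactly the point where the proposition goes beyond \cite{CM1}: you use, both in Part 1 and for \eqref{cont2}, the identity $\dist{y(t)}{1}=M(u_0)$, i.e.\ the conservation of $M$ along the \emph{weak} solution. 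This is not provided by \cite{CM1} (the paper states explicitly that the conservation of $M(u)$ and the property $y\in C_{ti}(\R;\M_+)$ are precisely the new assertions), and it does not follow from your construction: since $y_n(t)\ge 0$, the weak-$\ast$ convergence $y_n(t)\rightharpoonup\!\ast\;y(t)$ only yields the one-sided bound
$$
\dist{y(t)}{1}\;\le\;\liminf_{n\to\infty}\dist{y_n(t)}{1}\;=\;\lim_{n\to\infty}M(u_{0,n})\;=\;M(u_0)\, ,
$$
because momentum mass can escape to spatial infinity in the limit. Your tightness step (``convergence of total masses forbids escape of mass'') therefore assumes what has to be proven: without the equality $\dist{y(t)}{1}=M(u_0)$ you can invoke neither the Bourbaki-type result (\cite{Bourbaki}) upgrading weak-$\ast$ to tight convergence, nor conclude that $y\in C_{ti}(\R;\M_+)$, and \eqref{cont2} collapses.

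The paper closes this gap with a two-sided approximation that exploits the reversibility of \eqref{CH}: approximating $u_0$ by smooth data gives $M(u(t))\le M(u_0)$ for all $t\neq 0$; then approximating $u(t_0)$, for an arbitrary $t_0\neq 0$, by smooth data and solving from time $t_0$ gives $M(u(t))\le M(u(t_0))$ for all $t\neq t_0$; the two inequalities together force $t\mapsto M(u(t))$ to be constant. Only after this does the convergence of total masses hold, and with it the tight convergence of $y_n(t)$ for each $t$, the tight continuity in time (via the $L^\infty_t L^1_x$ bound on $\partial_t u_n$ and Arzel\`a--Ascoli), and finally \eqref{cont2}. You should supply this monotonicity-in-both-time-directions argument for $M$, or some equivalent mechanism ruling out loss of momentum mass at infinity; everything downstream of that point in your proposal is otherwise sound.
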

  \begin{proof}
  The uniqueness and global existence results are obtained in \cite{CM1} except the conservation of $ M(u) $ and the fact that $ y $ belongs to $ C_{ti}(\R; \M_+)$. 
   In \cite{CM1}, only the fact that $ y\in L^\infty(\R,\M_+) $ is stated. Note also that in \cite{CM1} the results  are stated only for positive times but, since the equation is reversible with time, it is direct to check that the results hold as well for negative times. 
  
 To prove \eqref{cont1}, it suffices to notice that, according to the conservation of the $ H^1$-norm   and \eqref{bv}, the sequence of emanating solution $\{u_n\} $ is bounded in  $ C(\R_+;H^1(\R))\cap L^\infty(\R; W^{1,1}(\R)) $
  with $ \{u_{n,x} \} $ bounded in $ L^\infty(\R; {\mathcal BV}(\R)) $.  Therefore, there exists $ v\in  L^\infty(\R;H^1(\R)) $ with $ (1-\partial_x^2) v \in 
   L^\infty(\R; {\mathcal M_+(\R)}) $ and an increasing sequence of integers $ \{n_k\}_{k\ge 1} $ such that,  for any $ T>0$,
  $$
     u_{n_k} \weaktendsto{k\to\infty} v \in L^\infty([-T,T]; H^1(\R))  \mbox{ and }  (1-\partial_x^2) u_{n_k} \weaktendsto{k\to\infty} \hspace*{-3mm} \ast \; (1-\partial_x^2) v  \mbox{ in } L^\infty(]-T,T[; {\mathcal M}_+(\R))  
$$
   But, using that $ \{\partial_t u_n\} $ is bounded in $L^\infty(\R; L^2(\R) \cap L^1(\R) )$, Helly's,  Aubin-Lions' compactness and Arzela-Ascoli's theorems then ensure that $ v $ is a solution to \eqref{CH} that belongs to $ C_{w}([-T,T]; H^1(\R)) $ with  $ v(0)=u_0 $ and that 
    \begin{equation} \label{yt}
    (1-\partial_x^2) u_{n_k}  \rightharpoonup  \! \ast \; (1-\partial_x^2) v \mbox{ in } C_{w} ([-T,T], \M) \; .
    \end{equation} 
     In particular, $ v_t\in L^\infty(]-T,T[; L^2(\R)) $ and thus $ v\in C([-T,T];L^2(\R)) $. Since $ v\in L^\infty(]-T,T[; H^{\frac{3}{2}-} (\R))$, this  actually implies that 
   $ v\in C([-T,T];  H^{\frac{3}{2}-}(\R)) $ and, using again the equation, it follows that $ v_t\in C(\R; L^2(\R)) $. Therefore, 
 $v$  belongs to the uniqueness class which ensures that $ v=u$ and thus the above weak convergence results hold for the whole sequence $\{u_n\} $. 
 The conservation of $ E(\cdot) $ together with these weak convergence results then lead to \eqref{cont1}. 
 
 Let us now prove  that $M(\cdot)$ is a conservation law  for our solutions and that $y\in C_{ti}(\R; \M_+) $. For this we apply the same arguments as above but for a smooth sequence $ \{\tilde{u}_{0,n}\} \subset H^3(\R) \cap Y_+ $ that converges in the same sense to $u_0\in Y_+ $. According to Proposition \ref{smoothWP},  $ M(\cdot) $ is a conservation law for the solutions emanating from $ \tilde{u}_{0,n} $ and, by hypothesis, 
  $M(\tilde{u}_{0,n})\to M(u_0) $. Therefore, the convergence result \eqref{yt} ensures that $ M(u(t)) \le M(u_0) $ for all $ t\neq 0 $. But, approximating in the same way $ u(t_0) $ for $t_0\neq 0 $, the same arguments lead to $ M(u(t))\le M(u(t_0)) $ for all $ t\neq t_0 $ which forces  $ M(\cdot) $ to be a conservation law. Hence,  $ M(\tilde{u}_n(t)) \to M(u(t)) $ for all $t\in \R $. Since $  \{(1-\partial_x^2) \tilde{u}_n(t)\}\subset {\mathcal M}_+ $, it is well-known (see for instance Proposition 9, page 61, in \cite{Bourbaki}) that this convergence  result  together with the weak convergence  \eqref{yt}  ensure the tight convergence of  $ 
 ( 1-\partial_x^2) \tilde{u}_n(t) $ towards $y(t) $ for all $ t\in\R $. Using again that $ \{\partial_t \tilde{u}_n\} $ is bounded in 
 $L^\infty(]-T,T[;L^1(\R)) $ and Arzela-Ascoli's theorem, we obtain that $y\in  C_{ti}(\R;\M_+) $ and that \eqref{cont2} holds for  $ \{\tilde{u}_n\} $.
  
  Now, coming back to the sequence $ \{u_n\} $, we deduce from the tight convergence of $\{(1-\partial_x^2) u_{0,n}\} $ towards $ u_0-u_{0,xx} $ together with the conservation of $ M(\cdot) $  that $ M(u_n(t)) \to M(u(t)) $ for all $t\in \R $ and the same arguments as above lead to \eqref{cont2}.
 
    Finally \eqref{weakcont} can be proven exactly in the same way, since  $ \{u_{0,n}\} $ is bounded in $ Y_+ $  by  Banach-Steinhaus theorem. 
  \end{proof}
  \begin{remark}
  {\bf 3.} of Proposition \ref{WP} ensures that \eqref{CH} is a dynamical system in $ Y_+ $ endowed with its natural weak star topology, i.e.
  $$
  \varphi_n \rightharpoonup\!\ast\; \varphi \mbox{ in } Y \mbox{ iff }    \varphi_n \rightharpoonup \varphi \mbox{ in } H^1(\R) \mbox{ and }
  (1-\partial_x^2) \varphi_n\ \rightharpoonup\! \ast \;   (1-\partial_x^2)\varphi \text{ in } {\mathcal M}(\R) \; .
  $$
  \end{remark}

 \section{Decay of $Y$-almost localized solution moving to the right}
  \begin{proposition}\label{prodecay}
 Let $ u \in C(\R; H^1) $  with  $ y=(1-\partial_x^2)u \in C_{w}(\R; \M+) $ be a $Y$-almost localized solution of \eqref{CH} with $ \inf_{\R} \dot{x}\ge c_0>0$.  Then there exists $ C>0 $  such that for all $ t\in\R $, all 
  $ R >0 $  and all $ \Phi\in C(\R) $ with $0\le \Phi\le 1 $ and $ \supp \Phi \subset [-R,R]^c $.
\begin{equation}\label{estimatedecay}
 \int_{\R} (u^2(t)+u_x^2(t))  \Phi(\cdot-x(t)) \, dx + c_0 \dist{ \Phi(\cdot-x(t))}{y(t)} \le C \, \exp( -R/6) \; .
 \end{equation}
\end{proposition}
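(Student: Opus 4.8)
The plan is to run a Martel--Merle type almost-monotonicity argument adapted to the two densities that control a solution in $Y_+$: the energy density $u^2+u_x^2$ and the momentum density $y$. First I would record the structural facts that make everything work. By Proposition \ref{WP}, $E(u)$ and $M(u)=\dist{y}{1}$ are conserved, so $\|u(t)\|_{H^1}$ and the total mass $\|y(t)\|_{\M}$ are bounded uniformly in $t$; in particular $\|u(t)\|_{L^\infty}^2\le E(u_0)/2=:M_0^2$. Moreover \eqref{dodo} gives $|u_x|\le u$ pointwise, hence $u\ge 0$, together with $y\ge 0$. These sign properties are exactly what will let the nonlocal and positive contributions be absorbed.

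Next I would write down the two local conservation laws. Setting $P=(1-\partial_x^2)^{-1}(u^2+u_x^2/2)=p\ast(u^2+u_x^2/2)\ge 0$, the equation \eqref{CH} yields the local energy identity $\partial_t(u^2+u_x^2)+\partial_x(2uP+uu_x^2)=0$. The transport form \eqref{CHy} gives, against a smooth weight $\psi$, $\dist{\psi}{\partial_t y}=\dist{\psi'}{uy}-\dist{\psi}{u_x y}$, and I would use $u_xy=\tfrac12\partial_x(u^2-u_x^2)$ to rewrite $-\dist{\psi}{u_x y}=\tfrac12\int\psi'(u^2-u_x^2)\,dx$, which is nonnegative since $|u_x|\le u$. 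Then, for a fixed smooth nondecreasing $\psi:\R\to[0,1]$ with $\psi(-\infty)=0$, $\psi(+\infty)=1$ and $0\le\psi'\le\tfrac16\psi$, so that $\psi'(x)\le\psi'(z)e^{|x-z|/6}$, the pressure is controlled locally:
$$
\int P\,\psi'\,dx\le\tfrac12\int(u^2+u_x^2/2)(z)\,\psi'(z)\Bigl(\int e^{-|x-z|}e^{|x-z|/6}\,dx\Bigr)dz=\tfrac65\int(u^2+u_x^2/2)\psi'\,dx .
$$
This is the step that pins the rate $1/6$: the kernel $p=\tfrac12 e^{-|x|}$ must beat the growth $e^{|x-z|/6}$ of the weight so that the exponent $-\tfrac56|x-z|$ keeps the $x$-integral finite.

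With these ingredients I would set $F_R(t)=\int(u^2+u_x^2)\psi(\cdot-x(t)-R)\,dx+c_0\dist{\psi(\cdot-x(t)-R)}{y}$ and differentiate. Using the two conservation laws, the pressure estimate and $\dot x\ge c_0$, every term carries the factor $\psi'(\cdot-x(t)-R)$. Where $u$ is large (near $x(t)$) this factor is $\le\tfrac16 e^{-R/6}$, so those contributions are $O(e^{-R/6})$; where $\psi'$ is of order one (far to the right of $x(t)$) the densities are $\le\varepsilon$ by the almost-localization \eqref{defloc}, so that once $\varepsilon$ is fixed small in terms of $M_0$ and $c_0$ the negative term coming from $\dot x\ge c_0$ dominates the remaining positive terms. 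This yields an almost-monotonicity inequality of the form $\tfrac{d}{dt}F_R(t)\le C e^{-R/6}$.

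Finally I would upgrade this differential inequality to the uniform pointwise bound by trading time for space: since $u\ge 0$ and $y\ge 0$, the momentum propagates only to the right at finite speed $\le M_0$, while the frame advances at speed $\ge c_0>0$, so integrating the inequality over successive time intervals on which $x(\cdot)$ advances a fixed distance produces a recursion on the tail that sums to $Ce^{-R/6}$. Splitting a general $\Phi$ supported in $[-R,R]^c$ into its right and left pieces and running the symmetric estimate on each then gives \eqref{estimatedecay}. I expect the main obstacle to be precisely this last upgrade --- converting the qualitative localization into a quantitative exponential rate while keeping the signs and constants in the time derivative closed; the choice of rate $1/6$ (so that $p$ dominates the weight) and the interplay of $\dot x\ge c_0$ with the rightward finite-speed propagation of the nonnegative momentum are exactly what make it succeed.
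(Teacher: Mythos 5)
Your setup matches the paper's in its essentials: the weighted identities for $\frac{d}{dt}\int(u^2+u_x^2)g$ and $\frac{d}{dt}\int yg$ are exactly the paper's \eqref{go}--\eqref{go2}, the rewriting of the momentum flux as $\frac12\int\psi'(u^2-u_x^2)$ and its absorption by the drift term is the paper's ``crucial observation'', and your direct kernel estimate for the pressure is equivalent to the paper's operator inequality $(1-\partial_x^2)^{-1}\Psi'\le 2\Psi'$ deduced from $|\Psi'''|\le\frac12\Psi'$; both pin the rate $1/6$ for the same reason. Two smaller issues first: the implication ``$0\le\psi'\le\frac16\psi$, so that $\psi'(x)\le\psi'(z)e^{|x-z|/6}$'' is false as stated --- you need a bound on the logarithmic derivative of $\psi'$ itself, i.e. $|\psi''|\le\frac16\psi'$ (the paper's $\Psi$ does satisfy this); and your computations require smooth solutions, so they must be run on mollified data $\rho_n\ast u(t_0)$ and passed to the limit via the continuity statements of Proposition \ref{WP}, a step you omit entirely.

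The fatal gap is the last step. You anchor the weight at $x(t)+R$, so it travels at speed $\dot x$, and you obtain $\frac{d}{dt}F_R(t)\le Ce^{-R/6}$ with a right-hand side constant in time. Integrating gives $F_R(t)\le F_R(t_1)+C(t-t_1)e^{-R/6}$, whose error grows linearly in the length of the time interval, so no uniform bound follows. Your proposed rescue --- finite speed of propagation plus a ``recursion on the tail'' --- does not close: the momentum is indeed transported by $u$ at speed $\le\|u\|_{L^\infty}$, but the energy density is not, since its flux $2uP+uu_x^2$ contains the nonlocal pressure $P=p\ast(u^2+u_x^2/2)$, which moves energy across arbitrary distances instantaneously; moreover, with the weight glued to $x(t)$ the tail radius never improves from one time step to the next, so the recursion has nothing geometric to sum. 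What the paper does instead (Lemma \ref{almostdecay} and the proof of Proposition \ref{prodecay}) is to let the weight move \emph{strictly slower} than the solution, $z(t)=\frac23x(t)$, anchored so that it passes through $x(t_0)+R$ at time $t_0$: then for $t\le t_0$ its center lies at distance $R+\frac13(x(t_0)-x(t))\ge R+\frac13c_0(t_0-t)$ to the right of $x(t)$. This buys exactly the two things your argument lacks: (a) the error terms localized near $x(t)$ carry the factor $e^{-R/6}e^{-\beta c_0(t_0-t)/6}$, integrable over $(-\infty,t_0]$, so the accumulated error is $O(e^{-R/6})$ uniformly in the time span; and (b) the monotone quantity $I^{+R}_{t_0}(t)$ tends to $0$ as $t\to-\infty$, because its weight recedes to $+\infty$ relative to $x(t)$ and the almost-localization hypothesis kills both the near and far pieces of the integral. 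Backward monotonicity plus this vanishing limit give the absolute bound $I^{+R}_{t_0}(t_0)\le Ce^{-R/6}$; the left half of \eqref{estimatedecay} then follows from the symmetry $(t,x)\mapsto(-t,-x)$, as you anticipated. Without the slower reference curve and the limit at $t=-\infty$, your differential inequality cannot be converted into the stated estimate.
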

To prove this proposition, the main tool is an almost monotonicity result for $E(u)+c_0 M(u) $ at the right of an almost  localized solution. Actually, the almost monotonicity is more general and  says somehow that if $z(t) $ moves to the right with a positive speed strictly less that $ \dot{x}(t) $ then the part of $E(u)+c_0 M(u) $ at the right of $ z(t) $ is almost decreasing as soon as $ |z(t)-x(t)| $ stays large enough. 

As in \cite{MM2}, we introduce the $ C^\infty $-function $ \Psi $ defined  on $ \R $ by 
\begin{equation}
\Psi(x) =\frac{2}{\pi} \arctan \Bigl( \exp(x/6)\Bigr) 
\end{equation}
It is easy to check that $ \Psi(-\cdot)=1-\Psi $ on $ \R $, $ \Psi' $ is a  positive even  function and that 
 there exists $C>0 $ such that $ \forall x\le 0 $, 
\begin{equation}\label{psipsi}
|\Psi(x)| + |\Psi'(x)|\le C \exp(x/6) \; .
\end{equation}
Moreover, by direct calculations, it is easy to check that 
\begin{equation}\label{psi3}
|\Psi^{'''}| \le  \frac{1}{2} \Psi' 
\end{equation}
\begin{lemma}\label{almostdecay}
  Let $0<\alpha < 1 $ and let $ u\in C(\R;H^1) $,  with $ y=(1-\partial_x^2)u \in C_{w}(\R; \M_+) $, be a solution of \eqref{CH} such that there exist $x\,:\, \R\to \R $ of class $ C^1 $ with 
   $ \inf_{\R} \dot{x}\ge c_0>0$  and $ R_0>0 $ with
 \begin{equation}\label{loc}
 \|u(t)\|_{L^\infty(|x-x(t)|>R_0)} \le \frac{(1-\alpha) c_0}{2^6} \, , \; \forall t\in\R .
 \end{equation}
  For $ 0<\beta \le \alpha $, $ 0\le \gamma\le \frac{3}{2} (1-\alpha) c_0 $, $ R>0 $, $ t_0\in\R $  and any $ C^1 $-function 
  \begin{equation}\label{condz}
  z\, :\, \R\to \R \mbox{ with } (1-\alpha)  \dot{x}(t) \le \dot{z}(t) \le (1-\beta) \dot{x}(t), \quad \forall t\in \R,
   \end{equation}
  setting
     \begin{equation}\label{defI}
 I^{\mp R}_{t_0} (t)=\dist{u^2(t)+u_x^2(t)+\gamma  y(t)}{\Psi\Bigl(\cdot- z_{t_0}^{\mp R}(t)\Bigr)}
 \end{equation}
  where 
 $$
 z_{t_0}^{\mp R}(t)=x(t_0)\mp R +z(t)-z(t_0)
 $$
 we have 
 \begin{equation}
I^{+R}_{t_0}(t_0)-I^{+R}_{t_0}(t)\le K_0 e^{-R/6} , \quad \forall t\le t_0 \quad  \label{mono}
\end{equation}
and 
 \begin{equation}
I^{-R}_{t_0}(t)-I^{-R}_{t_0}(t_0)\le K_0 e^{-R/6} , \quad \forall t\ge t_0 \quad , \label{mono2}
\end{equation}
for some constant $ K_0>0 $ that only depends on $ E(u) $, $c_0$, $R_0$ and $ \beta$. 
\end{lemma}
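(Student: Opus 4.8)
The plan is to pass to smooth solutions, establish the pointwise differential inequality
\[
\frac{d}{dt} I^{\mp R}_{t_0}(t) \le C\, e^{-d(t)/6}, \qquad d(t):=\bigl|z_{t_0}^{\mp R}(t)-x(t)\bigr| ,
\]
with $C=C(E(u),c_0,R_0)$, and then integrate it against the geometry of $z_{t_0}^{\mp R}$. First I would approximate the datum of $u$ at a fixed time by a sequence in $H^3(\R)\cap Y_+$ converging in $H^1(\R)$ with tight convergence of the momentum density; by Proposition \ref{smoothWP} the emanating solutions are smooth, and by Proposition \ref{WP} they converge to $u$ in $C([-T,T];H^1(\R))$ so that $I^{\mp R}_{t_0}$ converges pointwise in $t$. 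All integrations by parts below are then licit at the smooth level and, the final bound being uniform, it passes to the limit; note that \eqref{loc} survives (with an arbitrarily small enlargement of the constant) along the approximation since it converges in $L^\infty_{loc}$. Two facts about $\Psi$ are used. From \eqref{psi3}, $\tfrac12\Psi'\le(1-\partial_x^2)\Psi'=\Psi'-\Psi'''\le\tfrac32\Psi'$; since $p=\tfrac12 e^{-|\cdot|}\ge 0$ and $p\ast\,{=}\,(1-\partial_x^2)^{-1}$ is order preserving, applying $p\ast$ to $\Psi'\le 2(\Psi'-\Psi''')$ gives the \emph{pointwise} bound $p\ast\Psi'\le 2\Psi'$; together with the decay $\Psi'(w)\le Ce^{-|w|/6}$ from \eqref{psipsi} these are all I need.

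Next I compute the derivative. Writing $\psi=\Psi(\cdot-z_{t_0}^{\mp R}(t))$ and $P=(1-\partial_x^2)^{-1}$, a direct computation from $u_t+uu_x+\partial_x P(u^2+\tfrac12 u_x^2)=0$ yields the local energy identity $\partial_t(u^2+u_x^2)+\partial_x\bigl(uu_x^2+2uP(u^2+\tfrac12 u_x^2)\bigr)=0$, while \eqref{CHy} gives $\tfrac{d}{dt}\dist{y}{\psi}=\dist{uy}{\psi'}-\dist{u_xy}{\psi}-\dot z\dist{y}{\psi'}$. The term $\dist{u_xy}{\psi}$ is the delicate one, because for $z_{t_0}^{-R}$ the weight $\psi$ is \emph{not} small near the soliton; using $y=u-u_{xx}$ and integration by parts I rewrite $-\dist{u_xy}{\psi}=\tfrac12\int(u^2-u_x^2)\psi'$, which relocates it onto the localized weight $\psi'$. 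With $F=u^2+\tfrac12 u_x^2$ this gives
\[
\frac{d}{dt}I^{\mp R}_{t_0}(t)=-\dot z\,\dist{u^2+u_x^2+\gamma y}{\psi'}+\int\Bigl(uu_x^2+2uPF+\tfrac{\gamma}{2}(u^2-u_x^2)\Bigr)\psi'\,dx+\gamma\dist{uy}{\psi'} ,
\]
whose first term is $\le 0$.

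It remains to bound the rest by $Ce^{-d/6}$, splitting $\R=A\cup B$ with $A=\{|x-x(t)|\le R_0\}$. On $A$ the geometry and \eqref{psipsi} give $\psi'\le Ce^{-d/6}$, while $\|u\|_{L^\infty},\,\|PF\|_{L^\infty}\lesssim E(u)$ and the mass of $y$ on $A$ is $\lesssim\sqrt{E(u)}(\sqrt{R_0}+1)$ (from $y=u-u_{xx}$, $|u_x|\le u$ and the $H^1$ embedding), so the whole $A$-contribution is $\le C(E(u),R_0)e^{-d/6}$. On $B$, \eqref{loc} gives $|u|,|u_x|\le\varepsilon_0:=\tfrac{(1-\alpha)c_0}{2^6}$: the momentum error $\gamma\dist{uy}{\psi'}$ there is dominated by $-\gamma\dot z\dist{y}{\psi'}$ from the main term since $\varepsilon_0<\dot z$, and the quadratic part $uu_x^2-\dot z(u^2+u_x^2)+\tfrac\gamma2(u^2-u_x^2)$ is $\le-\tfrac14(1-\alpha)c_0\,u^2$ because $\gamma\le\tfrac32(1-\alpha)c_0$ and $\dot z\ge(1-\alpha)\dot x\ge(1-\alpha)c_0$ by \eqref{condz}. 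The only genuinely nonlocal term is $\int_B 2uPF\psi'$: from $F\le\tfrac32 u^2$ (as $|u_x|\le u$) one gets $2uPF\le 3\varepsilon_0 P(u^2)$ on $B$, and $\int P(u^2)\psi'=\int u^2(p\ast\psi')\le 2\int u^2\psi'$ by the pointwise bound, whence $\int_B 2uPF\psi'\le 6\varepsilon_0\int u^2\psi'+Ce^{-d/6}$; this is absorbed by $-\tfrac14(1-\alpha)c_0\int u^2\psi'$ exactly because $6\varepsilon_0\le\tfrac14(1-\alpha)c_0$. Hence $\frac{d}{dt}I^{\mp R}_{t_0}(t)\le Ce^{-d(t)/6}$.

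Finally the geometry closes the argument: for $z_{t_0}^{+R}$ and $t\le t_0$, $z_{t_0}^{+R}(t)-x(t)=R+\int_t^{t_0}(\dot x-\dot z)\ge R+\beta c_0(t_0-t)$ using $\dot z\le(1-\beta)\dot x$, and symmetrically $x(t)-z_{t_0}^{-R}(t)\ge R+\beta c_0(t-t_0)$ for $t\ge t_0$, so in both cases $d(t)\ge R+\beta c_0|t-t_0|$. Integrating $\frac{d}{dt}I^{\mp R}_{t_0}\le Ce^{-d/6}$ over $[t,t_0]$ (resp. $[t_0,t]$) and using $\int e^{-d/6}\le e^{-R/6}\int_0^{\infty}e^{-\beta c_0\tau/6}\,d\tau=\tfrac{6}{\beta c_0}e^{-R/6}$ yields \eqref{mono} and \eqref{mono2} with $K_0=\tfrac{6C}{\beta c_0}$, which depends only on $E(u)$, $c_0$, $R_0$ and $\beta$. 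The main obstacle is precisely the nonlocal cubic flux term $2uP(u^2+\tfrac12 u_x^2)$: unlike every other error it is not exponentially small at the cutoff, and controlling it forces both the smallness threshold $2^{6}$ in \eqref{loc} and the sharp convolution bound $p\ast\Psi'\le 2\Psi'$ coming from \eqref{psi3}.
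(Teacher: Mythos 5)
Your proof is correct and takes essentially the same route as the paper's: the same smooth approximation via Propositions \ref{smoothWP}--\ref{WP}, the same two flux identities (your pointwise local conservation law is exactly the differential form of \eqref{go}, and your rewriting of $-\dist{u_x y}{\psi}$ is \eqref{go2}), the same absorption of the momentum-flux term $\frac{\gamma}{2}\int(u^2-u_x^2)\Psi'$ under $\gamma\le\frac{3}{2}(1-\alpha)c_0$, the same inner/outer splitting at $|x-x(t)|=R_0$ with the convolution bound $(1-\partial_x^2)^{-1}\Psi'\le 2\Psi'$ from \eqref{psi3}, and the same geometric estimate $d(t)\ge R+\beta c_0|t-t_0|$ followed by integration in time. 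One minor refinement worth noting: by bounding the local mass of $y$ on $\{|x-x(t)|\le R_0\}$ through $|u_x|\le u$ and $\int_I y=\int_I u-[u_x]_{\partial I}\lesssim \sqrt{E(u)}(\sqrt{R_0}+1)$, instead of invoking $\|y\|_{L^1}=M(u_0)$ as the paper does in \eqref{J11}, you obtain a constant $K_0$ that genuinely depends only on $E(u)$, $c_0$, $R_0$ and $\beta$, exactly as the lemma asserts.
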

\begin{proof}
We first approximate $u(t_0) $ by the sequence of smooth functions $ u_{0,n}=\rho_n\ast u(t_0) $, with $ \{\rho_n\} $ defined in \eqref{rho}, that belongs to $H^\infty(\R) \cap Y_+ $
 and converges\footnote{By this me mean that $u_{0,n} \to u(t_0) $ in $ H^1(\R) $ and $(1-\partial_x^2)u_{0,n} \to  (1-\partial_x^2)u(t_0)$ in $\M$.}  to $ u(t_0) $ in $ Y$. According to Propositions \ref{smoothWP} and \ref{WP}, the  sequence of solutions $ \{u_n\} $ to \eqref{CH}  with $ u_n(t_0)=u_{0,n} $  belongs to $ C(\R;H^\infty(\R)) $ and 
 for any fixed $ T>0 $ it holds 
\begin{eqnarray}
u_n & \to & u \mbox{ in } C([t_0-T,t_0+T];H^1) \label{cvH1}\\
y_n & \rightharpoonup \! \ast & y  \mbox{ in } C_{ti} (]t_0-T,t_0+T[;{\mathcal M}) \label{weakcv1}
\end{eqnarray}
where $ y_n=u_n-\partial^2_x u_n $. In particular, for any fixed $ T>0 $, there exists $n_0=n_0(T)\ge 0 $ such that for any $ n\ge n_0 $, 
$$
\|u_n-u\|_{L^\infty(]t_0-T,t_0+T[\times \R)} <  \frac{(1-\alpha) c_0}{2^6} \; ,
$$
which together with  \eqref{loc} forces 
\begin{equation}\label{dif}
\sup_{t\in ]t_0-T,t_0+T[}  \|u_n\|_{L^\infty(|x-x(t)|>R_0)} <  \frac{(1-\alpha) c_0}{2^5} \; .
\end{equation}
We first prove that    \eqref{mono}  holds on $ [t_0-T,t_0] $ with  $ u $ replaced by $ u_n $ for $ n\ge n_0 $. 
The following computations hold for $ u_n $ with $ n\ge n_0$ but , to simplify the notation, we drop the index $ n $.  For any function $ g\in C^1(\R) $ it is not too hard to check that  (see Appendix \ref{sect71})
\begin{eqnarray}
\frac{d}{dt}\int_{\R} (u^2+u_x^2) g &=&\int_{\R} u u_x^2 g'  +2 \int_{\R} u h g'
\label{go}
\end{eqnarray}
where $ h:=(1-\partial_x^2)^{-1} (u^2+u_x^2/2) $. Moreover, it easily follows from \eqref{CHy} that 
 \begin{eqnarray}
 \frac{d}{dt}\int_{\R}  y g \, dx & = &-\int_{\R} \partial_x (y u) g - \int_{\R} y u_x g \nonumber \\
 & = & \int_{\R} y u g' -\int_{\R} (u-u_{xx}) u_x g \nonumber \\
  &= &  \int_{\R} y u g' +\frac{1}{2} \int_{\R} (u^2-u_x^2) g'  \label{go2}
 \end{eqnarray}
  Applying \eqref{go}-\eqref{go2} with $ g(t,x)=\Psi(x - z^{R}_{t_0}(t)) $ we get
  \begin{eqnarray}
 \frac{d}{dt}I^{+R}_{t_0}(t) & = &-\dot{z}(t) \int_{\R} \Psi' \Bigl[ u^2+u_x^2 +\gamma y \Bigr] +\frac{\gamma}{2} \int_{\R} (u^2-u_x^2) \Psi'\nonumber \\
 &  & + \int_{\R} ( u u_x^2 + \gamma y u ) \Psi'+2 \int_{\R} u h \Psi' \nonumber \\
  &\le &   - \dot{z}(t) \int_{\R} \Psi' \Bigl[ u^2+u_x^2 +\gamma y \Bigr] +\frac{\gamma}{2} \int_{\R} (u^2-u_x^2) \Psi'+J_1+J_2\, . \label{go3}
 \end{eqnarray}
 The crucial observation is that the second term in the right-hand side of \eqref{go3} that provides from the momentum part of $ I $  can be absorbed thanks to 
 the term coming from the  energy part of $ I $. More precisely, for  $ 0\le \gamma\le \frac{3}{2}(1-\alpha) c_0 $, it holds 
 $$
 -\dot{z}(t) \int_{\R} \Psi' \Bigl[ u^2+u_x^2 +\gamma y \Bigr] +\frac{\gamma}{2} \int_{\R} (u^2-u_x^2) \Psi'\le -\frac{(1-\alpha) c_0}{4} \int_{\R} \Psi' \Bigl[ u^2+u_x^2 +\gamma y \Bigr] 
 $$
 Now, the terms $J_1$ and $ J_2$ are treated as usually. 
 To estimate $ J_{1} $ we divide $ \R $ into two regions relating to the size of $ |u| $ as follows
\begin{eqnarray}
J_{1}(t) &= & \int_{|x-x(t)|<R_0} (u u_x^2+\gamma y u )\Psi'
+ \int_{|x-x(t)|>R_0} (u u_x^2+\gamma y u )\Psi'\nonumber \\
 & = & J_{11}+J_{12}\quad . \label{J0}
\end{eqnarray}
Observe that \eqref{condz} ensures that $ \dot{x}(t)-\dot{z}(t)\ge \beta c_0 $ for all $t\in \R $  and thus, for $ |x-x(t)|<R_0 $,
 \begin{equation}\label{to1}
  x-z_{t_0}^{R}(t)=x-x(t)-R+(x(t)-z(t))-(x(t_0)-z(t_0))\le  R_0-R-\beta c_0 (t_0-t) 
  \end{equation}
  and thus the decay properties of $ \Psi' $ ensure that 
\begin{eqnarray}
J_{11} (t) &\lesssim & \Bigl[\|u(t)\|_{L^\infty} (\|u_x(t)\|_{L^2}^2+c_0\|y(t)\|_{L^1})\Bigr]  e^{R_0/6}  e^{-R/ 6}
e^{-\frac{\beta}{6} c_0(t_0-t)} \nonumber \\
 & \lesssim  &  \| u_0\|_{H^1}(\|u_0\|_{H^1}^2+c_0\|y_0\|_{L^1}) e^{R_0/6} e^{-R/6}
e^{-\frac{\beta}{6} c_0(t_0-t)} \quad . \label{J11}
\end{eqnarray}
On the other hand,  \eqref{dif}  ensures that for all $ t\in [t_0-T,t_0] $ it holds
\begin{eqnarray}
J_{12} &\le & 4 \| u\|_{L^\infty(|x-x(t)|>R_0)} \int_{|x-x(t)|>R_0} (u_x^2+\gamma y)\Psi'\nonumber \\
 & \le & \frac{ (1-\alpha) c_0 }{8}  \int_{|x-x(t)|>R_0} (u_x^2+\gamma y ) \Psi' \quad .\label{J12}
\end{eqnarray}
 It thus remains to estimate
 $ J_2(t) $.
For this,  we decompose again $
  \R $ into two regions relating to the size of $ |u| $.
   First proceeding as
    in \eqref{J11} we easily check that 
  \begin{eqnarray}
& & \int_{|x-x(t)|<R_0}u \Psi'
(1-\partial_x^2)^{-1}(2u^2+u_x^2 ) \nonumber \\
& & \le 4 \|u\|_{L^\infty} \sup_{|x-x(t)|<R_0}
|\Psi'(x- z^{R}_{t_0}(t))|\int_{\R} e^{-|x|} \ast (u^2+u_x^2 ) \, dx \nonumber \\
 &  & \le C \|u_0\|_{H^1}^3 e^{R_0/6} e^{-R/6}
 e^{-\frac{\beta}{6}c_0(t-t_0)} \label{J31}
\end{eqnarray}
since
\begin{equation}
 \forall f\in L^1(\R), \quad (1-\partial_x^2)^{-1} f =\frac{1}{2} e^{-|x|} \ast f \quad .
 \label{tytu}
 \end{equation}
Now in the region $ |x-x(t)|>R_0$, noticing that $ \Psi' $ and
$ u^2+u_x^2/2 $ are non-negative, we  get
 \begin{eqnarray}
 & & \int_{|x-x(t)|>R_0} u \Psi'
(1-\partial_x^2)^{-1}(2u^2+u_x^2 ) \nonumber \\
 &  \le &
\|u(t)\|_{L^\infty(|x-x(t)|>R_0)}\int_{|x-x(t)|>R_0}\Psi'(
(1-\partial_x^2)^{-1}(2u^2+u_x^2) \nonumber \\
&  \le &  \|u(t)\|_{L^\infty(|x-x(t)|>R_0)}  \int_{\R} (2u^2+u_x^2) (1-\partial_x^2)^{-1}
\Psi'
\end{eqnarray}
On the other hand, from
  \eqref{psi3} and \eqref{tytu}   we  infer  that 
  $$
(1-\partial_x^2) \Psi' \ge \frac{1}{2} \Psi' \Rightarrow
(1-\partial_x^2)^{-1} \Psi'\le 2  \Psi' \; .
  $$
Therefore, on account of \eqref{dif},
\begin{eqnarray}
 & &  \int_{|x-x(t)|>R_0} u  \Psi'
(1-\partial_x^2)^{-1}(2u^2+u_x^2) \nonumber \\
&  &  \le 2 \|u(t)\|_{L^\infty(|x-x(t)|>R_0)} \int_{\R}
(2u^2+u_x^2)
 \Psi' \nonumber \\
 &  &  \le   \frac{ (1-\alpha)  c_0}{8}
\int_{\R} (u^2+u_x^2)
 \Psi' \label{J32}
\end{eqnarray}
Gathering \eqref{J0}, \eqref{J11}, \eqref{J12}, \eqref{J31} and
\eqref{J32} we conclude that there exists  $C $  only
depending on  $R_0 $ and $ E(u) $ 
  such
that for  $ R \ge R_0 $ and $ t\in [-T+t_0,t_0] $ it holds 
\begin{equation}
\frac{d}{dt} I^{+R}_{t_0}(t) \le  C  e^{-R/6} e^{-\frac{\beta}{6}(t_0-t)} \; .
\label{nini}
\end{equation}
Integrating between $ t$ and $ t_0$  we obtain \eqref{mono} for  any $  t \in [ t_0-T,t_0] $ and $ u $ replaced by $ u_n $ with $ n\ge n_0$. Note that the constant appearing in front of the exponential now also depends on $ \beta$. 
 The convergence results \eqref{cvH1}-\eqref{weakcv1} then ensure that \eqref{mono} holds also for  $ u $ and   $t\in[ t_0-T,t_0] $ and  the result for $ t\le t_0 $ follows since $ T>0 $ is arbitrary. Finally, \eqref{mono2} can be proven in exactly the same  way by noticing that  for  $|x-x(t)|<R_0 $ it holds 
  \begin{equation} \label{to2}
   x-z_{t_0}^{-R}(t)=x-x(t)+R+(x(t)-z(t))- (x(t_0)-z(t_0))\ge  -R_0+R+\beta c_0 (t-t_0) \; .
  \end{equation}
\end{proof}
\noindent
{\bf Proof of Proposition \ref{prodecay}}
First, since $ u $ is $Y$-almost localized, it is clear that $ u $ satisfies the hypotheses of Lemma \ref{almostdecay} for $\alpha=1/3$. 
We  fix $\alpha=1/3 $ and take $ \beta=1/3 $, $ \gamma=c_0$ and $ z(\cdot)=\frac{2}{3} x(\cdot)$ which clearly satisfy \eqref{condz}. Let us   show that $ I^{+R}_{t_0}(t) \tendsto{t\to-\infty} 0 $ which
together with \eqref{mono}
 will clearly lead to
 \begin{equation}
I^{+R}_{t_0}(t_0) \le  C e^{-R/6} \label{gigi}\quad .
 \end{equation}
For $ R_\varepsilon>0 $ to be specified later we decompose
  $ I^{+R}_{t_0} $ into
  \begin{eqnarray*}
I^{+R}_{t_0}(t)& = &\dist{u^2(t)+u_x^2(t)+c_0  y(t)}{\Psi(\cdot-z^{R}_{t_0}(t))\Bigl(1-\phi(\frac{\cdot-x(t)}{R_\varepsilon})\Bigr)}\\
& & +\dist{u^2(t)+u_x^2(t)+c_0 y(t)}{\Psi(\cdot-z^R_{t_0}(t))
\phi(\frac{\cdot-x(t)}{R_\varepsilon})}
\\
 &= & I_1(t)+I_2(t) \quad .
  \end{eqnarray*}
  where $ \phi\in C^\infty(\R) $ is supported in $[-1,1] $ with $ 0\le \phi\le 1 $ on $ [-1,1] $ and $ \phi\equiv 1 $ on $[-1/2,1/2]$. 
From the $Y$-almost localization hypothesis, for any $ \varepsilon>0
$ there exists $ R_\varepsilon>0 $
 such that $ I_1(t) \le \varepsilon/2 $. On the other hand, we observe that
 $$
 I_2(t) \le  (\|u_0\|_{H^1}^2+c_0\|y_0\|_{\mathcal{M}}) \Psi\Bigl(R_\varepsilon-R-\frac{1}{3}(x(t_0)-x(t))\Bigr) \quad.
 $$
  But
  $ \dot{x}>c_0>0 $  obviously  imply that, for $ |x-x(t)|\le R_\varepsilon$, 
  $$x-z^{+R}_{t_0}(t)=x-x(t)-R -\frac{1}{3} (x(t_0)-x(t))  \le R_\varepsilon-R-\frac{1}{3} c_0 (t_0-t) \tendsto{t\to -\infty} -\infty
  $$
  which
  proves our claim since $\displaystyle\lim_{x\to -\infty} \Psi(x) =0 $. \\
 It follows from  \eqref{gigi} that for all $ t\in\R $, all 
  $x_0>0 $  and all $ \Phi\in C(\R) $ with $0\le \Phi\le 1 $ and $ \supp \Phi \subset [x_0, +\infty[ $.
$$
 \int_{\R} (u^2(t)+u_x^2(t))  \Phi(\cdot-x(t)) \, dx + c_0 \dist{ \Phi(\cdot-x(t))}{y(t)} \le C \, \exp( -R/6) \; .
 $$
  The invariance of (C-H) under the transformation $ (t,x) \mapsto (-t,-x) $ yields the result for $  \supp \Phi \subset ]-\infty,-x_0] $ which completes the proof of the proposition.
 \section{Liouville result  for  $Y$-almost localized solution moving to the right}
  In this section we will need the following lemma (see for instance \cite{Ifti})
 \begin{lemma}\label{BV}
 Let $\mu $  be a finite nonnegative measure on $  \R$ . Then $\mu $ is the sum of a nonnegative non atomic  measure $ \nu $ and a countable sum of positive Dirac measures (the discrete part of $\mu$). Moreover, for all $ \varepsilon>0 $ there exists $\delta>0 $  such that, if $I$ is an interval of length less than 
 $\delta$ , then $ \nu(I) \le \varepsilon $.
 \end{lemma}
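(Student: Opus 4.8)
The plan is to establish the two assertions of the lemma separately: first the atomic / non-atomic decomposition, and then the uniform smallness of the non-atomic part $\nu$ on short intervals.

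For the decomposition, I would begin from the observation that a finite measure can charge only countably many points. Indeed, for each $n\ge 1$ the set $A_n=\{x\in\R \,:\, \mu(\{x\})>1/n\}$ contains at most $n\,\mu(\R)$ points, so the set of all atoms $\{x_i\}_i=\bigcup_{n\ge 1}A_n$ is at most countable. Setting $a_i=\mu(\{x_i\})>0$, with $\sum_i a_i\le \mu(\R)<\infty$, I would take the discrete part to be $\sum_i a_i\,\delta_{x_i}$ and define $\nu=\mu-\sum_i a_i\,\delta_{x_i}$. For any Borel set $A$ one has $\nu(A)=\mu\bigl(A\setminus\{x_i\}_i\bigr)\ge 0$, so $\nu$ is a nonnegative finite measure; and by construction $\nu(\{x\})=0$ for every $x\in\R$, i.e. $\nu$ is non atomic. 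This gives the decomposition.

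For the second assertion I would argue by contradiction, exploiting the finiteness (hence tightness) of $\nu$. Suppose the conclusion fails: there exist $\varepsilon_0>0$ and intervals $I_n$ with $|I_n|\to 0$ but $\nu(I_n)>\varepsilon_0$ for all $n$. Since $\nu$ is finite I may fix $M>0$ with $\nu([-M,M]^c)<\varepsilon_0$; then every $I_n$ must meet $[-M,M]$, and because $|I_n|\to 0$ we get $I_n\subset[-M-1,M+1]$ for $n$ large. Choosing $c_n\in I_n$ and passing to a subsequence, $c_n\to x_0$ for some $x_0$, so for each fixed $\eta>0$ eventually $I_n\subset[x_0-\eta,x_0+\eta]$; hence $\limsup_n \nu(I_n)\le \nu([x_0-\eta,x_0+\eta])$. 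Letting $\eta\to 0$ and using continuity from above of the finite measure $\nu$ together with $\nu(\{x_0\})=0$ yields $\limsup_n\nu(I_n)=0$, contradicting $\nu(I_n)>\varepsilon_0$. Equivalently, one may phrase this through the distribution function $F(x)=\nu((-\infty,x])$, which is bounded, nondecreasing and, since $\nu$ is non atomic, continuous with $F(-\infty)=0$ and $F(+\infty)=\nu(\R)$; such an $F$ extends continuously to the compactified line, is therefore uniformly continuous, and $\nu((a,b])=F(b)-F(a)\le\varepsilon$ for $b-a<\delta$ is exactly the claim. (The type of interval is irrelevant since $\nu$ charges no point.)

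The only genuinely delicate point is the uniformity in the second part: pointwise the statement $\nu(\{x\})=0$ is trivial, but obtaining a single $\delta$ valid for all short intervals requires ruling out mass escaping to infinity or concentrating along a moving sequence of points. Both the tightness reduction to a compact window and the compactification argument handle this, and everything else is routine bookkeeping.
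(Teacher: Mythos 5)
Your proof is correct. Note that the paper itself gives no proof of Lemma \ref{BV}: it is quoted as a known fact with a pointer to the reference \cite{Ifti}, so there is no in-paper argument to compare yours against; your write-up in fact supplies what the paper leaves to a citation. Both halves of your argument are sound: the decomposition is the classical atomic/non-atomic splitting (countability of the atoms via the sets $A_n=\{x\,:\,\mu(\{x\})>1/n\}$, nonnegativity and countable additivity of $\nu=\mu-\sum_i a_i\delta_{x_i}$ following from $\sum_{x_i\in A}a_i=\mu(A\cap\{x_i\}_i)\le\mu(A)$), and the uniform smallness of $\nu$ on short intervals is established correctly by either of your two routes --- the contradiction argument, where tightness confines the offending intervals to a compact window, a limit point $x_0$ of their centers is extracted, and continuity from above of the finite measure $\nu$ together with $\nu(\{x_0\})=0$ kills the $\limsup$; or the equivalent argument via the distribution function $F(x)=\nu((-\infty,x])$, which is continuous exactly because $\nu$ is non-atomic, has finite limits at $\pm\infty$, and is therefore uniformly continuous on $\R$. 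You also correctly identify the one delicate point, namely that the estimate must be uniform in the location of the interval (ruling out mass concentrating along a moving sequence or escaping to infinity); either of your mechanisms handles it, so the proof stands as a complete, self-contained justification of the lemma.
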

 \subsection{Boundedness from above of the  support  of the momentum density}
\begin{proposition} 
\label{pro3}Let $ u \in C(\R; Y_+) $ be a $Y$-almost localized solution of \eqref{CH} with $ \dot{x} \ge c_0>0$. There  exists $ r_0>0 $ such that\ for all $t\in \R$, it holds 
 \begin{equation}\label{pro3.1}
 \supp y(t,\cdot+x(t))\subset ]-\infty,r_0] ,
\end{equation}
and
\begin{equation}\label{pro3.2}
u(t,x(t)+r_0)=-u_x(t,x(t)+r_0)\ge \frac{e^{-2r_0}}{4 \sqrt{r_0}} \sqrt{E(u)} \, .
\end{equation}
 \end{proposition}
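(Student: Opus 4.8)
The plan is to read everything off the representation $u=p\ast y$ with $p(x)=\tfrac12 e^{-|x|}$, combined with the finite speed of propagation of the momentum density. A direct computation with this kernel gives the identity
\[
(u+u_x)(t,x)=\int_{x'>x}e^{\,x-x'}\,dy(t,x')\ge 0 ,
\]
which is strictly positive exactly when $y(t)$ charges $]x,+\infty[$ and vanishes precisely for $x\ge\sup\supp y(t)$. Hence the inclusion \eqref{pro3.1} is equivalent to the single requirement that $x(t)+r_0$ lie to the right of $\supp y(t)$ for every $t$, and in that case $u_x(t,x(t)+r_0)=-u(t,x(t)+r_0)$ follows for free, which is the first identity of \eqref{pro3.2}. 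So the real content is to find one radius $r_0$, independent of $t$, bounding $\supp y(t,\cdot+x(t))$ from above.

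For that upper bound I would invoke finite speed of propagation. Because $u\in Y_+$ obeys $|u_x|\le u$ by \eqref{dodo}, $u$ is globally Lipschitz in space, so the flow $q(t,\cdot)$ of the velocity field $u$ (that is $\dot q=u(t,q)$) is a well-defined increasing homeomorphism, and the transport equation \eqref{CHy} shows that it carries the support of $y$; this is first checked on the smooth approximations $u_n=\rho_n\ast u$ of Propositions \ref{smoothWP}--\ref{WP} and then passed to the limit. Writing $u=p\ast y$ and using the exponential decay of the mass of $y$ to the right of $x(t)$ from Proposition \ref{prodecay}, I would produce a radius $R_1>0$, uniform in $t$, such that $u(t,x)<c_0$ whenever $x\ge x(t)+R_1$. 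Thus, to the right of $x(t)+R_1$ the characteristics are strictly slower than the frame, whose speed is $\dot x\ge c_0$. If momentum were present to the right of $x(t_0)+R_1$ at some time $t_0$, then following it backwards in time it would remain to the right of $x(t)+R_1$ and be driven arbitrarily far from $x(t)$; showing that it cannot disperse but must keep a definite mass out there then contradicts the $Y$-almost localization of $u$. This last point — ruling out an escaping thin tail of momentum in the infinite past, at the level of the measure $y$ rather than of a smooth density — is the main obstacle, and it is exactly where finite speed of propagation and the uniform localization must be played against each other. Granting it, $\supp y(t)\subset\,]-\infty,x(t)+R_1]$ for all $t$ and any $r_0\ge R_1$ works for \eqref{pro3.1}.

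It remains to prove the quantitative lower bound in \eqref{pro3.2}, for which I would enlarge $r_0$ if needed so that the window $W=[x(t)-r_0,x(t)+r_0]$ captures a fixed proportion of the energy. Indeed $|u_x|\le u$ gives $\int_{\R}u^2\ge\tfrac12 E(u)$, while Proposition \ref{prodecay} makes $\int_{|x-x(t)|>r_0}(u^2+u_x^2)$ exponentially small, so that $\int_W u^2\ge\tfrac18 E(u)$ once $r_0$ is large enough (both constraints on $r_0$ are lower bounds, hence compatible). A mean value argument then produces $x_\ast\in W$ with
\[
u(t,x_\ast)\ge (2r_0)^{-1/2}\Big(\int_W u^2\Big)^{1/2}\ge\Big(\frac{E(u)}{16\,r_0}\Big)^{1/2},
\]
and the logarithmic Lipschitz bound $|(\log u)_x|=|u_x|/u\le 1$ (valid since $u=p\ast y>0$) propagates this value to $x(t)+r_0$, at distance at most $2r_0$:
\[
u(t,x(t)+r_0)\ge u(t,x_\ast)\,e^{-|x(t)+r_0-x_\ast|}\ge e^{-2r_0}\Big(\frac{E(u)}{16\,r_0}\Big)^{1/2}=\frac{e^{-2r_0}}{4\sqrt{r_0}}\sqrt{E(u)} ,
\]
which is exactly the claimed estimate.
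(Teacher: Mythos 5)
Your reduction of \eqref{pro3.1} to locating $\supp y(t)$ to the left of $x(t)+r_0$ (via $(u+u_x)(t,x)=\int_{x'>x}e^{x-x'}dy(t,x')\ge 0$), and your proof of \eqref{pro3.2} (energy capture on the window by Proposition \ref{prodecay}, mean value, then propagation by $u_x\ge -u$) are correct and essentially identical to the paper's own treatment of those parts. But the core of the proposition is the uniform upper bound on the support, and there your argument has a gap that you yourself flag and then "grant": the claim that momentum mass transported backward in time far to the right of $x(t)$ \emph{cannot disperse but must keep a definite mass out there}. This is not a technicality to be played off between "finite speed of propagation and uniform localization"; it is the actual content of the paper's proof, and it fails to follow from the ingredients you list. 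The danger is not spatial spreading: by \eqref{CHy}, $y$ is \emph{not} transported as a push-forward measure. Along characteristics the density carries the weight $\exp(-2\int u_{n,x})$ while lengths are stretched by $\exp(+\int u_{n,x})$, so the mass of a transported interval is weighted by $\exp(-\int_t^0 u_{n,x}(s,q_n(s,x))\,ds)$ and could in principle tend to $0$ as $t\to-\infty$, in which case no contradiction with $Y$-almost localization would arise.

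The paper closes exactly this hole quantitatively. Working with the smooth approximations $u_n$, one has the pointwise conservation law \eqref{yy}, $y_n(0,x)=y_n(t,q_n(t,x))\,q_{n,x}(t,x)^2$. The backward characteristics issued from $[x(0)+r_0,+\infty)$ separate from $x(t)$ at linear rate, $q_n(t,x(0)+r_0)-x(t)\ge r_0+\tfrac{c_0}{2}|t|$ (\eqref{claim2}); hence Proposition \ref{prodecay} combined with \eqref{dodo} gives \emph{exponential-in-time} decay of $u_n$ and $u_{n,x}$ along them, so that the integral $\int_{-\infty}^0 |u_{n,x}(s,q_n(s,x))|\,ds$ is finite, uniformly in $n$ and in the starting point. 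Plugged into \eqref{formula}, this yields the two-sided Jacobian bound $C_0^{-1}\le q_{n,x}\le C_0$ of \eqref{to}, uniform in $t\le 0$. Only then does \eqref{yy}, after a change of variables, convert an assumed mass $\varepsilon_0$ of $y_n(0)$ in $[x(0)+r_0,x(0)+r_0']$ into a mass $\ge C_0^{-1}\varepsilon_0$ of $y_n(t)$ to the right of $x(t)+r_0+c_0|t|/2$ for all $t\le 0$ (\eqref{to8}); passing to the limit with the tight convergence \eqref{cont2} and letting $t\to-\infty$ contradicts Definition \ref{defYlocalized}. So to complete your proof you need precisely this chain: linear separation of the backward characteristics, exponential temporal decay of $u_x$ along them, the resulting uniform Jacobian bounds, and the weighted conservation law \eqref{yy} --- none of which is supplied by the qualitative statement that the flow "carries the support of $y$", and the last of which does not survive a naive passage to the limit from $u_n$ to $u$ without the uniform quantitative bound \eqref{to8}.
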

 \begin{proof}
 Clearly, it suffices to prove the result for $ t=0 $. 
  Let $ u \in C(\R; H^1) $,with $y=u-u_{xx}\in C_{w}(\R;\M_+) $,  be a $Y$-almost localized solution to \eqref{CH} and   let $ \phi\in C^\infty(\R) $ with $ \phi\equiv 0 $ on $\R_-, \; \phi'\ge 0 $ and $ \phi\equiv 1  $ on $[1,+\infty]$. We claim that there exists $ r_0>0 $ such that 
    \begin{equation}\label{claim1}
  \dist{y(0)}{\phi(\cdot-(x(0)+r_0))}=0 
  \end{equation}
  which proves the result  since $ y(0)\in \M_+ $.\\
  We prove \eqref{claim1} by contradiction. We approximate $u_0=u(0) $ by the sequence of smooth functions $ u_{0,n}=\rho_n\ast u_0 $ that belongs to $H^\infty(\R) \cap Y_+ $ so that \eqref{cont1}-\eqref{cont2} hold for any  $ T>0 $. We denote by $ u_n $ the solution to \eqref{CH} emanating from $ u_{0,n} $ and  by $ y_n=u_n-u_{n,xx}$ its momentum density. Let us recall that Proposition \ref{smoothWP} ensures that 
  $ u_n\in C(\R;H^\infty(\R)) $ and $ y_n\in C_{w}((\R;L^1(\R)) $.
We fix $ T>0 $ and we take $ n_0\in \N $ large enough so that for all $ n\ge n_0 $, 
\begin{equation}\label{approxu}
\|u_n -u \|_{L^\infty(]-T,T[; H^1)} <  \frac{1}{10}\min(c_0, \|u(0)\|_{H^1})
\end{equation}
and 
\begin{equation}\label{approxy0}
\|y_{0,n} -y_0 \|_{\mathcal{M}} < \frac{\varepsilon_0}{2} \quad .
\end{equation}
where $ \varepsilon_0>0 $ will be specified later.
   Thanks to the $Y$-almost localization of $ u $ , there exists $ r_0>0 $ such that 
   \begin{equation}\label{hu}
  \|u(t)\|_{H^1(\R/]x(t)-r_0,x(t)+r_0[)} \le  \frac{1}{10}\min(c_0, \|u(0)\|_{H^1}), \forall t\in \R \, .
  \end{equation}
  Note that by Sobolev's embedding theorem, it also holds 
   \begin{equation}\label{hu1}
  u(t,x(t)+x) \le  \frac{1}{10}\min(c_0, \|u(0)\|_{H^1}), \forall (|x|,t)\in [r_0,+\infty[\times\R \, .
  \end{equation}
  Combining these two estimates with \eqref{approxu} we infer that for $ n\ge n_0 $, 
   \begin{equation}\label{hun}
  \|u_n(t)\|_{H^1(\R/]x(t)-r_0,x(t)+r_0[)} \le  \frac{1}{5}\min(c_0, \|u(0)\|_{H^1}), \forall t\in [-T,T] \, 
  \end{equation}
  and
  \begin{equation}\label{hu1n}
  u_n(t,x(t)+x) \le  \frac{1}{5}\min(c_0, \|u(0)\|_{H^1}), \forall (|x|,t)\in [r_0,+\infty[\times [-T,T] \, .
  \end{equation}
 
 Now, we introduce the flow $ q_n$ associated with $ u_n $ defined by 
 \begin{equation}\label{defqn}
  \left\{ 
  \begin{array}{rcll}
  q_{n,t}(t,x) & = & u_n(t,q_n(t,x))\, &, \; (t,x)\in \R^2\\
  q_n(0,x) & =& x\, & , \; x\in\R \; 
  \end{array}
  \right. .
  \end{equation} 
  Following \cite{C}, we know that for any $ t\in \R $,
  \begin{equation}\label{yy}
  y_n(0,x)=y_n(t,q_n(t,x)) q_{n,x}(t,x)^2 
  \end{equation}
    Indeed, on one hand,  \eqref{CHy} clearly ensures that 
 $$
 \frac{\partial}{\partial t} \Bigl( y_n(t,q_n(t,x))e^{2\int_0^t u_{n,x}(s, q_n(s,x))\, ds} \Bigr) =0 
$$ 
and, on the other hand, \eqref{defqn}  ensures that $ q_{n,x}(0,x)=1 $, $\forall x\in \R $, and 
\begin{equation}\label{formula1}
\frac{\partial}{\partial t} q_{n,x}(t,x)=u_x(t,q(t,x)) q_x(t,x) \Rightarrow q_{n,x}(t,x) =\exp\Bigl( \int_0^t u_x(s,q(s,x))\,ds\Bigr) \; .
\end{equation}
  We claim that for all $ n\ge n_0 $ and $ t\in [-T,0] $ , 
   \begin{equation}\label{claim2}
 q_n(t,x(0)+r_0) -x(t) \ge r_0+\frac{c_0}{2} |t|  \; .
  \end{equation}
  Indeed, fixing $ n\ge n_0 $, in view of  \eqref{hu1n} and the continuity of $ u_n $ there exists $ t_0\in [-T,0[$ such that for all $t\in [t_0,0] $, 
  $$
  u_n(t,q_n(t,x(0)+r_0))\le \frac{c_0}{4}
  $$
  and thus according to \eqref{defqn}, for all $ t\in [t_0,0] $, 
  $$
  \frac{d}{dt} q_n(t,x(0)+r_0) \le \frac{c_0}{4} 
  $$
  which leads to 
  $$
  q_n(t,x(0)+r_0)-x(t) \ge r_0+\frac{c_0}{2} |t|, \quad  t\in [t_0,0]\; .
  $$
  This proves \eqref{claim2} by a continuity argument.
   We thus deduce from Proposition \ref{prodecay} that for all $ t\in [-T,0]  $ and all $ x\ge 0 $,
  \begin{equation}
  u(t,q_n(t,x(0)+r_0+x)  \le C \exp \Bigl( -\frac{1}{6}(r_0+ c_0 |t|/2 )\Bigr)\\
  \end{equation}
  Therefore, in view of \eqref{cont1} and \eqref{dodo}, there exists $n_1\ge n_0 $ such that for all $ t\in [-T,0] $ and all $ x\ge 0 $,
  \begin{equation}
  u_n(t,q_n(t,x(0)+r_0+x) +|u_{n,x}(t,q_n(t,x(0)+r_0+x)| \le 4 C \exp \Bigl( -\frac{1}{6}(r_0+ c_0 |t|/2 )\Bigr)\\
  \end{equation}
  The formula (see \eqref{formula1})
  \begin{equation}\label{formula}
  q_{n,x}(t,x)=\exp \Bigl( -\int_t^0 u_{n,x}(s,q_n(s,x))\, ds \Bigr) 
  \end{equation}
   thus ensures that $ \forall t\in [-T,0] , \; \forall x\ge 0 $ and $ \forall n\ge n_0 $, 
  $$
   \exp\Bigl( -4C \int_{-T}^0 e^{ -\frac{1}{6}(r_0+ c_0 |s|/2)}\, ds \Bigr) \le  q_{n,x}(t,x(0)+r_0+x) \le \exp\Bigl( 4C \int_{-T}^0 e^{ -\frac{1}{6}(r_0+ c_0 |s|/2)}\, ds \Bigr)
  $$
  Setting $ C_0:= e^{\frac{48 C e^{-r_0/6}}{c_0}} $ this leads to 
  \begin{equation}\label{to}
  \frac{1}{C_0} \le  q_{n,x}(t,x(0)+r_0+x) \le C_0 \, , \; \forall t\in [-T,0]\; .
  \end{equation}
  Now, if   \eqref{claim1} would  not be  true then, in view of \eqref{approxy0} there exists $ r_0'>r_0 $ and $ \varepsilon_0>0 $  such that $ \forall n\ge n_1$, 
  $$
  \int_{x(0)+r_0}^{x(0)+r_0'} y_n(0,x)\, dx\ge \varepsilon_0>0 \; .
  $$
It then follows from \eqref{yy} that for all $ t\in [-T,0] $,
$$
\int_{x(0)+r_0}^{x(0)+r_0'} y_n(t,q_n(t,x)) q_{n,x}(t,x)^2\, dx \ge \varepsilon_0
$$ 
and \eqref{to} leads to 
$$
\int_{x(0)+r_0}^{x(0)+r_0'} y_n(t,q_n(t,x)) q_{n,x}(t,x)\, dx \ge C_0^{-1} \varepsilon_0
$$ 
Therefore, the change of variables $ z=q_n(t,x) $ yields 
$$
\int_{q_n(t,x(0)+r_0)}^{q_n(t,x(0)+r_0')} y_n(t,z)\, dz \ge  C_0^{-1}  \varepsilon_0
$$
and \eqref{claim2} ensures that 
\begin{equation}\label{to8}
\int_{x(t)+r_0+c_0 |t|/2}^{+\infty} y_n(t,z)\, dz \ge   C_0^{-1}  \varepsilon_0\, , \; \forall t\in [-T,0]  \; .
\end{equation}
Letting $ n\to +\infty $ using \eqref{cont2} and then letting $ T \to \infty $,  this ensures that 
$$
\dist{y(t)}{ \phi(\cdot-x(t)-r_0-c_0 |t|/2)}\ge    C_0^{-1}  \varepsilon_0\, , \; \forall t\le 0  \; .
$$
This  clearly contradicts the $Y$-almost localization of $ u $ and thus completes the proof of  \eqref{pro3.1}.

Let us now prove \eqref{pro3.2}.  We first notice that  thanks to \eqref{hu} and  the conservation of the $ H^1$-norm  it holds 
$$
\|u(t,\cdot-x(t))\|_{H^1(]-r_0,r_0[)} \ge \frac{1}{2}\sqrt{E(u)}\; .
$$
   But since for all $ t\in \R $, $ |u_x(t)|\le u(t) $ on $ \R $, this forces  
  \begin{equation}\label{hg}
   \max_{[-r_0,r_0]} u^2(t, \cdot-x(t)) \ge \frac{1}{2r_0} \|u(t, \cdot -x(t))\|_{L^2(]-r_0,r_0[)}^2 \ge \frac{1}{8 r_0}E(u)\; .
  \end{equation}
  Moreover, since $ u_x\ge -u $ on $\R^2$, for any $ (t,x_0)\in\R^2 $ it holds
 $$
 u(t,x) \le u(t,x_0) e^{-x+x_0}, \quad \forall x\le x_0 \; .
 $$
Applying this estimate with  $ x_0=x(t)+r_0 $ we obtain that 
 $$
 u(t,x(t)+r_0) \ge  \max_{[-r_0,r_0]} u(t, \cdot-x(t))  e^{-2r_0}
 $$
which, combined with \eqref{hg} yields \eqref{pro3.2}.
 \end{proof}
 
\subsection{Study of the first  jump  of $ u_x $}
We define 
$$
x_+(t)=\inf \{ x\in\R, \, \supp y(t)\subset  ]-\infty, x(t)+x] \} 
$$
In the sequel we set 
$$
\alpha_0:=\frac{e^{-2r_0}}{4 \sqrt{r_0}}\sqrt{E(u)}
$$
to simplify the expressions. According to Proposition \ref{pro3}, $  t\mapsto x_+(t) $ is well defined with values in $ ]-\infty, r_0] $ and 
\begin{equation}
u(t,x(t)+x_+(t))=-u_x(t,x(t)+x_+(t))\ge \alpha_0 \, .
\end{equation}
The following lemma ensures that $t\mapsto x(t)+x_+(t)$ is an integral line of $ u $. 
\begin{lemma} \label{lemmaq}
For all $ t\in \R$, it holds
\begin{equation} \label{qq}
x(t)+x_+(t)=q(t,x(0)+x_+(0)) \; .
\end{equation}
where $ q(\cdot,\cdot)$ is defined by 
 \begin{equation}\label{defq}
  \left\{ 
  \begin{array}{rcll}
  q_t (t,x) & = & u(t,q(t,x))\, &, \; (t,x)\in \R^2\\
  q(0,x) & =& x\, & , \; x\in\R \; 
  \end{array}
  \right. .
  \end{equation} 
\end{lemma}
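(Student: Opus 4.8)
The plan is to transfer to the limit the exact transport of the momentum density along the characteristics of the regularized problems. As in the proof of Proposition~\ref{pro3}, set $z_0:=x(0)+x_+(0)=\sup\supp y(0)$ and approximate $u(0)$ by $u_{0,n}=\rho_n\ast u(0)\in H^\infty(\R)\cap Y_+$, with $u_n$ the emanating smooth solutions, $y_n=u_n-u_{n,xx}\ge 0$ their momentum densities and $q_n$ the flows defined in \eqref{defqn}; recall that \eqref{cont1}--\eqref{cont2} hold for this sequence. By \eqref{formula} the Jacobian $q_{n,x}(t,\cdot)$ is everywhere positive, so each $q_n(t,\cdot)$ is an increasing homeomorphism of $\R$, and the identity \eqref{yy} gives $y_n(0,x)>0\Leftrightarrow y_n(t,q_n(t,x))>0$. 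Since $y_n$ is continuous, $q_n(t,\cdot)$ therefore maps $\supp y_n(0)$ onto $\supp y_n(t)$ and, being increasing, carries the right endpoint to the right endpoint:
\[
\sup\supp y_n(t)=q_n(t,b_n),\qquad b_n:=\sup\supp y_n(0),\quad\forall t\in\R .
\]

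First I would collect two convergences. Because $\rho_n$ is supported in $[-1/n,1/n]$ and $z_0=\sup\supp y(0)$, one has $b_n=z_0+1/n\to z_0$. Moreover $u$ is globally Lipschitz in $x$ (by \eqref{dodo}, $|u_x|\le u\le\|u\|_{H^1}$, the $H^1$-norm being conserved), so the flow $q$ in \eqref{defq} is well defined; the same bound makes the fields $u_n$ uniformly Lipschitz, and since $u_n\to u$ uniformly on $[-T,T]\times\R$ by \eqref{cont1}, Cauchy--Lipschitz stability yields $q_n\to q$ locally uniformly. Hence $q_n(t,b_n)\to q(t,z_0)$ for every $t$.

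Next I would bound $a(t):=x(t)+x_+(t)=\sup\supp y(t)$ from above. Fix $t$, choose $T>|t|$ and $\delta>0$; for $n$ large, $\sup\supp y_n(t)=q_n(t,b_n)<q(t,z_0)+\delta$, so $\dist{y_n(t)}{\phi}=0$ for every $\phi\in C_0(\R)$ with $\supp\phi\subset(q(t,z_0)+\delta,+\infty)$. Passing to the limit with \eqref{cont2} gives $\dist{y(t)}{\phi}=0$, whence $\supp y(t)\subset(-\infty,q(t,z_0)+\delta]$; letting $\delta\to 0$ we obtain $a(t)\le q(t,z_0)$.

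The hard part will be the reverse inequality, because weak-$\ast$ convergence of $\{y_n(t)\}$ does not control the supports from below. Here I would use that, by \eqref{formula} and \eqref{dodo}, on $[-T,T]$ the Jacobian obeys the uniform two-sided bound $e^{-CT}\le q_{n,x}(t,x)\le e^{CT}$ with $C:=\sup_n\|u_n\|_{L^\infty}<\infty$. Since $z_0=\sup\supp y(0)$, for all but countably many $\delta>0$ the measure $y(0)$ assigns a positive mass $m(\delta)$ to $[z_0-\delta,z_0]$, and tight convergence gives $\int_{b_n-\delta}^{b_n}y_n(0)\,dx\to m(\delta)$. Changing variables $z=q_n(t,x)$ in \eqref{yy},
\[
\int_{q_n(t,b_n-\delta)}^{q_n(t,b_n)} y_n(t,z)\,dz=\int_{b_n-\delta}^{b_n}\frac{y_n(0,x)}{q_{n,x}(t,x)}\,dx\ge e^{-CT}\int_{b_n-\delta}^{b_n}y_n(0,x)\,dx ,
\]
so for $n$ large $y_n(t)$ carries at least $\tfrac12 e^{-CT}m(\delta)$ of mass on $[q_n(t,b_n-\delta),q_n(t,b_n)]$. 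Letting $n\to\infty$, using $q_n(t,b_n-\delta)\to q(t,z_0-\delta)$, $q_n(t,b_n)\to q(t,z_0)$ and \eqref{cont2}, the limit $y(t)$ puts positive mass on every neighbourhood of $[q(t,z_0-\delta),q(t,z_0)]$, so $a(t)\ge q(t,z_0-\delta)$; letting $\delta\to 0$ gives $a(t)\ge q(t,z_0)$. Together with the previous step this yields $x(t)+x_+(t)=q(t,x(0)+x_+(0))$, which is \eqref{qq}.
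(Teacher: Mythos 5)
Your proof is correct, and it reaches \eqref{qq} by a genuinely different route than the paper. Both proofs use the same toolkit --- mollification $u_{0,n}=\rho_n\ast u(0)$, the transport identity \eqref{yy}, stability of the flows $q_n\to q$, and the convergences \eqref{cont1}--\eqref{cont2} --- and your upper bound $x(t)+x_+(t)\le q(t,x(0)+x_+(0))$ is essentially the paper's argument (there phrased as a contradiction: if the flow carried the right endpoint strictly to the left of $x(t_0)+x_+(t_0)$, transporting the support bound of $y_n(0)$ forward and passing to the limit with \eqref{cont2} would contradict the definition of $x_+(t_0)$). The real difference is the reverse inequality. The paper never proves a lower bound on supports under weak limits: it invokes the time-reversibility of \eqref{CH}, exchanges the roles of $t=0$ and $t=t_0$, and runs the same upper-bound argument backwards from a mollification of $u(t_0)$. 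You instead track mass forward in time: by the uniform Jacobian bounds $e^{-CT}\le q_{n,x}\le e^{CT}$ (from \eqref{formula1} and \eqref{dodo}) together with \eqref{yy}, a fixed fraction of the mass of $y_n(0)$ on $[b_n-\delta,b_n]$ persists on $[q_n(t,b_n-\delta),q_n(t,b_n)]$, and weak-$\ast$ convergence cannot destroy it, which pins $\supp y(t)$ from below. This is precisely the mechanism of the paper's proof of Proposition \ref{pro3} (compare \eqref{to8}), so you are in effect re-using that machinery where the paper uses reversibility. Trade-offs: the paper's argument is shorter and purely qualitative, but it is tied to being able to solve backwards and to the symmetric roles of the two times; yours is quantitative, stays entirely forward in time, and would adapt to irreversible settings. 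Two points you should make airtight: the convergence $\int_{b_n-\delta}^{b_n}y_n(0)\,dx\to m(\delta)$ concerns intervals that move with $n$, so rather than citing tight convergence it is cleaner to compute directly from the mollification structure (once $2/n<\delta/2$ one has $\int_{b_n-\delta}^{b_n}\rho_n\ast y(0)\,dx\ge y(0)\bigl((z_0-\delta/2,z_0]\bigr)>0$, which is all you need); and at the final step, note explicitly that positive mass in every neighbourhood of the compact interval $[q(t,z_0-\delta),q(t,z_0)]$ forces the closed set $\supp y(t)$ to intersect that interval, whence $x(t)+x_+(t)\ge q(t,z_0-\delta)$.
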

\begin{proof} 
First, it is worth noticing that the result would follow directly from the definition of $ x_+(\cdot)$ and $ q(\cdot) $ if  $ u$ would belong to $ C(\R; H^3(\R)) $. Since \eqref{CH} is reversible with time, it suffices to prove  that  \eqref{qq}  holds for all $ t\in [0,1]$. We proceed by contradiction by  assuming that there exits $t_0\in ]0,1]$
 such that $ q(t_0, x(0)+x_+(0) =x(t_0)+x_+(t_0) +\lambda_0 $ with $ \lambda_0\neq 0$. We separate the cases $ \lambda_0>0 $ and $ \lambda_0<0 $. 
 
 In the case 
  $ \lambda_0<0 $,  by the continuity and   monotonicity of $ x\mapsto q(t,x) $, $ t\in \R$, there exists $ \delta_0>0 $ such that 
   $q(t_0,x(0)+x_+(0)+\delta_0) <x(t_0)+x_+(t_0) -\frac{|\lambda_0|}{2}$.  
   We approximate  $ u$ at time $ t=0$ by the sequence $ \varphi_n= \rho_{n} \ast u(0) $ where $ \rho_n $ is defined in \eqref{rho}. 
By construction $ \varphi_n\in H^\infty(\R) \cap Y_+ $ and $ (1-\partial_x^2) \varphi_{n}\equiv 0 $ on $[x(0)+x_+(0)+\delta_0,+\infty[ $ as soon as 
   $ n>2/\delta_0$.  Let $ u_n $ be the solution of  \eqref{CH} such that $ u_n (t_0)=\varphi_n$.  
   Defining $ q_n : \R\to \R $ by 
   $$
    \left\{ 
  \begin{array}{rcll}
  \frac{d}{dt}q_n(t) & = & u_n(t,q_n (t))\, &, \; \forall t\in \R \\
  q_n(0) & =& x(t_0)+x_+(t_0)+\delta_0\, &  \; 
  \end{array}
  \right. 
$$
   it follows from \eqref{yy} that $ (1-\partial_x^2) u_n(0,\cdot)\equiv 0 $ on $ [q_n(t_0),+\infty[ $ for  $n>2/\delta_0$.
 On account of \eqref{cont1}, 
   $ q_n(\cdot) \to q(\cdot,x(t_0)+x_+(t_0)+\delta_0) $ in $C([0,1]) $ and thus $  (1-\partial_x^2) u_n(t_0)\equiv 0 $ on 
    $[x(t_0)+x_+(t_0)-\frac{|\lambda_0|}{4},+\infty[ $ for $ n $ large enough. \eqref{cont2} then  ensures that $ \supp y(t_0) \subset ]-\infty,    x(t_0)+x_+(t_0)-\frac{|\lambda_0|}{8}[$
     which contradicts the definiton of $x_+(t_0) $.
     
In the case $ \lambda_0>0 $,   there exists $ \delta_0>0 $ such that  $q(t_0,x(0)+x_+(0)-\delta_0) >x(t_0)+x_+(t_0) +\frac{|\lambda_0|}{2}$.  
     Then we obtain again a contradiction by following the same arguments, exchanging the role of $ t=0$ and $ t=t_0$. This completes the proof of the lemma.
\end{proof}
 In the sequel we define $ q^* \, :\, \R \to \R $ by 
 \begin{equation}\label{defq*}
 q^*(t)=q(t,x(0)+x_+(0))=x(t)+x_+(t), \quad \forall t\in \R \; .
 \end{equation}

\begin{proposition}\label{propa}
Let  $ a \, :\, \R\to \R $ be the function defined by 
\begin{equation}\label{defa}
a(t)=u_x(t,q^*(t)-)-u_x(t,q^*(t)+), \quad \forall t\in\R \, .
\end{equation}
Then $ a(\cdot) $ is  a bounded non decreasing differentiable   function  on $ \R $ with values in $[\frac{\alpha_0}{8}, 2\sqrt{E(u)}]$  such that 
\begin{equation}\label{esta}
a'(t)=\frac{1}{2}(u^2-u_x^2)(t,q^*(t)-) , \;\forall t\in \R .
\end{equation}
\end{proposition}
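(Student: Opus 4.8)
The plan is to identify $a(t)$ with the mass of the atom that the measure $y(t)=(1-\partial_x^2)u(t)$ carries at the rightmost point $q^*(t)$ of its support, to compute its evolution from the transport identity \eqref{go2} tested against a suitable moving cutoff, and then to read off monotonicity and the two bounds from the pointwise inequality $|u_x|\le u$ of \eqref{dodo}. First I would record the structure to the right of $q^*(t)$: since $\supp y(t,\cdot+x(t))\subset\,]-\infty,r_0]$ by Proposition \ref{pro3}, we have $y(t)\equiv 0$ on $]q^*(t),+\infty[$, so $u$ solves $u=u_{xx}$ there and the $H^1$-decay forces $u(t,x)=m(t)\,e^{-(x-q^*(t))}$ for $x\ge q^*(t)$, where $m(t):=u(t,q^*(t))=-u_x(t,q^*(t)+)\ge\alpha_0$. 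Writing $u=\tfrac12 e^{-|\cdot|}\ast y$ and letting $x\uparrow q^*(t)$ then gives $u_x(t,q^*(t)-)=a(t)-m(t)$, where $a(t)=\dist{y(t)}{\mathbf 1_{\{q^*(t)\}}}$ is precisely the atom of $y(t)$ at $q^*(t)$, and hence the algebraic identity $(u^2-u_x^2)(t,q^*(t)-)=a(t)\bigl(2m(t)-a(t)\bigr)$.

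For the evolution I would use $\dot q^*(t)=u(t,q^*(t))=m(t)$ from Lemma \ref{lemmaq} and apply \eqref{go2} with the moving test function $g(t,x)=\chi_\delta(x-q^*(t))$, where $\chi_\delta(s)=\chi(s/\delta)$ and $\chi$ is smooth, nondecreasing, equal to $0$ for $s\le-1$ and to $1$ for $s\ge0$. Accounting for the time dependence of $g$ through $g_t=-m\,g_x$, the momentum contribution cancels and one is left with $\frac{d}{dt}\dist{y(t)}{g(t)}=\dist{y(t)}{(u-m)g_x}+\frac12\int_{\R}(u^2-u_x^2)\,g_x\,dx$. As $\delta\to0$ the left-hand side tends to $a(t)$; the first term on the right vanishes because $u$ is Lipschitz near $q^*(t)$ while $g_x$ concentrates at $q^*(t)$ from the left and $y$ is finite (Lemma \ref{BV}); and since $u_x\in BV$ admits a left trace, the last term tends to $\frac12(u^2-u_x^2)(t,q^*(t)-)$. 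Passing to the limit in the integrated form of this identity, justified through the smooth approximations $u_n$ and the convergences \eqref{cont1}--\eqref{cont2}, yields \eqref{esta}; the continuity in $t$ of $(u^2-u_x^2)(\cdot,q^*(\cdot)-)$ then upgrades $a$ to a $C^1$ function.

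The monotonicity and upper bound are immediate. Indeed \eqref{dodo} gives $|u_x(t,q^*(t)-)|\le u(t,q^*(t))=m(t)$, so both factors $m\pm u_x(q^*-)$ are nonnegative and \eqref{esta} shows $a'(t)\ge0$, i.e. $a$ is nondecreasing. For the upper bound, $a(t)=m(t)+u_x(t,q^*(t)-)\le 2m(t)\le 2\|u\|_{L^\infty}\le 2\sqrt{E(u)}$.

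The delicate point, and the step I expect to be the main obstacle, is the lower bound $a(t)\ge\alpha_0/8$. Here I would rewrite \eqref{esta} as the logistic identity $a'(t)=\tfrac12\,a(t)\bigl(2m(t)-a(t)\bigr)$, with $m(t)\ge\alpha_0$ and $0\le a\le 2m$. Since $a$ is nondecreasing and bounded, the limit $a_-:=\lim_{t\to-\infty}a(t)=\inf_{\R}a$ exists and $a'\in L^1(]-\infty,0])$, so there is $t_n\to-\infty$ with $a'(t_n)\to0$; if $a_->0$ this forces $2m(t_n)-a(t_n)\to0$, whence $a_-=2\lim_n m(t_n)\ge2\alpha_0$ and therefore $a(t)\ge a_-\ge 2\alpha_0\ge\alpha_0/8$ for every $t$. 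The whole difficulty is thus to exclude $a_-=0$. I would address this by combining the $Y$-almost localization with the weak compactness of Proposition \ref{WP} to extract an asymptotic solution $\tilde u$ at $t=-\infty$, which is again $Y$-almost localized, carries the full positive energy $E(\tilde u)=E(u)$, and whose rightmost atom equals $a_-$; applying the monotonicity just proved to $\tilde u$ would force its atom to vanish on a whole half-line, a degeneracy one then has to show is incompatible with $\tilde u$ being a nontrivial localized solution. Establishing this incompatibility is where the real work lies, and it is the crux of the lower bound.
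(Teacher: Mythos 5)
Your identification of $a(t)$ with the atom of $y(t)$ at $q^*(t)$, the algebra $(u^2-u_x^2)(t,q^*(t)-)=a(t)\bigl(2u(t,q^*(t))-a(t)\bigr)$, the moving-cutoff computation based on \eqref{go2}, and the deduction of monotonicity and of the upper bound from \eqref{dodo} all match the paper's treatment: the paper proves the integrated identity $a(t_2)-a(t_1)=\frac12\int_{t_1}^{t_2}(u^2-u_x^2)(\tau,q^*(\tau)-)\,d\tau$ by testing against $\phi_\varepsilon(\cdot-q^*(t))$ along smooth approximations, using the flow change of variables \eqref{yy} and the control of the non-atomic part of $y$ near $q^*$ coming from Lemma \ref{BV}, exactly in the spirit of your $\delta$-cutoff argument. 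Up to the (real, but standard for this paper) technical work of passing from $u_n$, $q^*_n$ to $u$, $q^*$, these parts are sound.

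The genuine gap is the lower bound $a(t)\ge \alpha_0/8$, and it is not a peripheral detail but the substantive content of the proposition. Your ODE/compactness reduction only disposes of the easy case $a_->0$; you explicitly concede you cannot exclude $a_-=0$, and your proposed route through an asymptotic object at $t=-\infty$ does not circumvent the difficulty. Even granting the nontrivial claims it requires (that the weak limit along $t_n\to-\infty$ generates a $Y$-almost localized solution, that its energy is $E(u)$, and that its rightmost atom equals $a_-$ --- each of which needs a Proposition \ref{propasym}-type argument), you would then face precisely the statement you cannot prove: that a nontrivial $Y$-almost localized solution cannot have a vanishing atom at the right endpoint of its momentum support. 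The paper resolves this by a direct contradiction argument, carried out \emph{before} \eqref{esta} is even available (so it uses no differentiability or monotonicity of $a$): if $a(t_0)<\alpha_0/8$, then by Lemma \ref{BV} there is $\beta_0>0$ with $\|y(t_0)\|_{{\mathcal M}(]q^*(t_0)-\beta_0,+\infty[)}<\alpha_0/8$; approximating by smooth solutions and using \eqref{yy}, a continuity/bootstrap argument shows that $\tilde{u}_x\le -\tfrac{3\alpha_0}{4}$ on the whole backward image $[\tilde{q}_1(t),\tilde{q}_2(t)]$ of $[q^*(t_0)-\beta_0,q^*(t_0)]$ under the flow, for all $t\le t_0$; this yields $\frac{d}{dt}(\tilde{q}_2-\tilde{q}_1)\le-\tfrac{\alpha_0}{2}(\tilde{q}_2-\tilde{q}_1)$, so the two flow lines separate exponentially as $t\to-\infty$ while $u$ remains bounded below by $\alpha_0/2$ at both of them, contradicting the almost-localization \eqref{defloc}. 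Some version of this backward flow-spreading mechanism is indispensable --- it is exactly what rules out the degenerate case you label the ``crux'' --- and without it your proof of the proposition is incomplete.
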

\begin{proof}
First, the fact that $ a(t) \le 2 \sqrt{E(u)}$ follows directly from $ |u_x|\le u \le \|u\|_{H^1} $. 
To prove that $ a(t) \ge \frac{\alpha_0}{8}$, 
 we proceed by contradiction. So let us assume that there exists $ t_0\in\R $ such that  $ a(t_0)<\alpha_0/8 $. Since 
  $ y(t_0)\in \M_+ $ with $ \supp y(t_0)\subset ]-\infty, q^*(t_0)]$, according to Lemma \ref{BV} we must have
$$
\lim_{z\nearrow q^*(t_0)} \|y(t_0)\|_{{\mathcal M}(]z,+\infty[)}<\frac{\alpha_0}{8} \;.
$$
Without loss of generality we can assume that $ t_0=0 $ and thus  there exists $ \beta_0>0 $ such that 
\begin{equation}\label{toto1}
 \|y(0)\|_{{\mathcal M}(]q^*(0))-\beta_0,+\infty[)}<\frac{\alpha_0}{8} \; .
\end{equation}
By convoluting $ u_0$ by $ \rho_n $ (see \eqref{rho}), for some $ n\ge 0 $, we can  approach $ u_0 $ by a smooth function $\tilde{u}_0 \in Y_+\cap H^\infty(\R) $.  
 Taking $ n $ large enough, we may  assume that there exists  $ \tilde{x}_+>x_+(0)$ close to $ x_+(0) $,  such that 
  \begin{equation}\label{hu21}
 \tilde{y}_0=(1-\partial_x^2) \tilde{u}_0 \equiv 0 \mbox{ on }[x(0)+\tilde{x}_{+},+\infty[
 \end{equation}
 and
  \begin{equation}\label{hu2}
  \|\tilde{y}_0\|_{L^1(]x(0)+\tilde{x}_+-\beta_0,+\infty[)}\le  \frac{\alpha_0}{8}+ \frac{\alpha_0}{2^6}\;, 
 \end{equation}
 where $\tilde{y}_0=\tilde{u}_0-\tilde{u}_{0,xx} $. 
Moreover, defining $ \tilde{q}_2 \, :\, \R\to \R $ by 
$$
\tilde{q}_2(t)=\tilde{q}(t,x(0)+\tilde{x}_+)
$$
where $ \tilde{q}(\cdot,\cdot) $ is defined by   \eqref{defq} with $ u $ replaced by $ \tilde{u}$, 
   \eqref{cont1} enables us  to assume that   the emanating solution $ \tilde{u } $ satisfies 
 \begin{equation}\label{appo}
 \|u(t) - \tilde{u}(t)\|_{H^1} \le \frac{\alpha_0}{2^6}
 \end{equation}
 and 
 \begin{equation}\label{appo2}
 |q^*(t)-\tilde{q}_2(t)|<\frac{\alpha_0}{2^6 \sqrt{E(u)}}
 \end{equation}
 for all $ t\in [-t_1,t_1] $ with $ t_1 >0 $ to specified later. 
  It is worth noticing that \eqref{appo}-\eqref{appo2}, \eqref{hu21}, \eqref{yy}, \eqref{pro3.2} and the mean-value theorem  - recall that $ |u_x|\le u \le \sqrt{E(u)} $ - then ensure that 
 \begin{equation}\label{appo3}
-\tilde{u}_x(t, \tilde{q}_2(t)) = \tilde{u}(t, \tilde{q}_2(t)) \ge (1-2^{-5}) \alpha_0\; \quad \forall t\in [-t_1,t_1] \; .
 \end{equation}

We claim that  for all $ t\in [-t_1,0] $ it holds 
\begin{equation}\label{claim3}
\tilde{u}_x(t,x)\le -\frac{3\alpha_0}{4}\quad \mbox{on}\quad  [\tilde{q}_1(t), \tilde{q}_2(t)] \, , 
\end{equation}
where $ \tilde{q}_1(t) $ is defined by  $ \tilde{q}_1(t)=\tilde{q}(t,x(0)+\tilde{x}_+-\beta_0)$.\\
To see this, for $ \gamma>0 $, we set 
$$
A_{\gamma}=\{t\in \R_-\,  /\, \forall \tau\in [t,0], \; u_x(\tau,x) 	< -\gamma \; \mbox{on} \; [\tilde{q}_1(\tau), \tilde{q}_2(\tau)] \, \} \; .
$$
Recalling \eqref{pro3.2}, \eqref{hu2}, \eqref{appo3} and  that $ \tilde{u} \ge 0 $ , we get for $ 0\le \beta\le \beta_0$, 
\begin{eqnarray*}
\tilde{u}_x(0,x(0)+\tilde{x}_{+}-\beta) &  \le &  \tilde{u}_x(0,x(0)+\tilde{x}_{+}) +  \|\tilde{y}_0\|_{L^1(]x(0)+\tilde{x}_{+}-\beta_0,+\infty[)} \\
 &\le & -\alpha_0+\frac{\alpha_0}{2^5}+\frac{\alpha_0}{8} +\frac{\alpha_0}{2^5}< -\frac{3\alpha_0}{4}\; ,
\end{eqnarray*}
which ensures that $  A_\frac{3\alpha_0}{4} $ is non empty. By  a continuity argument, it thus suffices to prove that $  A_\frac{\alpha_0}{2} \subset    A_\frac{3\alpha_0}{4} $.
First we notice that for any $ t\in  A_\frac{\alpha_0}{2}  $ and any $ x\in [\tilde{q}_1(t), \tilde{q}_2(t)]$, the definition of  $ A_\frac{\alpha_0}{2}  $ ensures that 
$$
\tilde{q}_x(t,x)=\exp\Bigl( -\int_t^0 \tilde{u}_x(\tau,\tilde{q} (\tau,x))\, d\tau \Bigr)\ge 1 \; ,
$$
where $ \tilde{q}(\cdot,\cdot) $ is the flow associated to $ \tilde{u}$  by \eqref{defq}.
Therefore, $ \tilde{u}\ge 0 $, $ \tilde{y}\ge 0 $, a  change of variables, \eqref{yy}  and \eqref{hu2} ensure that for any $ x\in [\tilde{q}_1(t), \tilde{q}_2(t)]$,
$$
\int_{x}^{\tilde{q}_2(t)} \tilde{u}_{xx} (t,s) \, ds\ge - \int_{x}^{\tilde{q}_2(t)}\tilde{y} (t,s) \, ds\ge - \int_{\tilde{q}_1(t)}^{\tilde{q}_2(t)} \tilde{y} (t,s) \, ds=-\int_{x(0)+\tilde{x}_+ -\beta_0}^{x(0)+\tilde{x}_+} \tilde{y}(t,\tilde{q}(t,s))\tilde{q}_x(t,s) \, ds
$$
$$
\ge  -\int_{x(0)+\tilde{x}_+ -\beta_0}^{x(0)+\tilde{x}_+} \tilde{y}(t,\tilde{q}(t,s))\tilde{q}_x(t,s)^2 \, ds=   -\int_{x(0)+\tilde{x}_+ -\beta_0}^{x(0)+\tilde{x}_+} \tilde{y}_0(s) \, ds\ge -\frac{\alpha_0}{8}-\frac{\alpha_0}{2^6} 
$$
and \eqref{appo3}  yields
$$
\tilde{u}_x(t,x) = \tilde{u}_x(t,\tilde{q}_2(t))-\int_{x}^{\tilde{q}_2(t)} \tilde{u}_{xx} (t,s) \, ds \le -\alpha_0+\frac{\alpha_0}{8}+\frac{\alpha_0}{2^4}<-\frac{3\alpha_0}{4} \; ,
$$
which proves the desired result.

 We deduce from \eqref{claim3} that $ \forall t\in [-t_1, 0] $,
\begin{eqnarray*}
\frac{d}{dt} (\tilde{q}_2(t)-\tilde{q}_1(t)) & = & \tilde{u}(\tilde{q}_2(t))-\tilde{u}(\tilde{q}_1(t)) \\
& =& \int_{\tilde{q}_1(t)}^{\tilde{q}_2(t)} \tilde{u}_x(t,s) \, ds \\
& \le & -\frac{\alpha_0}{2} (\tilde{q}_2(t)-\tilde{q}_1(t)) \; .
\end{eqnarray*}
Therefore,
$$
(\tilde{q}_2-\tilde{q}_1)(t)\ge (\tilde{q}_2-\tilde{q}_1)(0) e^{-\frac{\alpha_0}{2} t} = \beta e^{-\frac{\alpha_0}{2} t}  \; .
$$
 On the other hand,  since  according to \eqref{appo3} and \eqref{claim3}, 
$ \tilde{u}(t, \tilde{q}_2(t))\ge 2 \alpha_0/3 $ and $\tilde{u}_x \le 0 $ on $]\tilde{q}_1(t), \tilde{q}_2(t)[ $, we deduce that   
$$
\tilde{u}(t,\tilde{q}_1(t))\ge \tilde{u}(t,\tilde{q}_2(t))\ge 2\alpha_0/3 , \quad \quad \mbox{ on } [-t_1,0] \; .
$$
Coming back to the solution $ u $ emanating from $ u_0 $, it follows from  \eqref{appo}  that 
$$
\min\Bigr(u(t,\tilde{q}_1(t_1)),u(t,\tilde{q}_2(t_1))\Bigr)\ge \frac{\alpha_0}{2} \mbox{ with } (\tilde{q}_2-\tilde{q}_1)(t_1)\ge \beta e^{-\frac{\alpha_0}{2} t} \; , \quad
\forall t\in [-t_1,0]\, .
$$
Taking $ t_1>0 $ large enough, this  contradicts the $Y$-almost localization of $ u$ which proves that $ a(t)\ge \frac{\alpha_0}{8}$ and thus $ u_x(t,\cdot) $ has got a jump at 
 $ x(t)+x_+(t) $.
 
It is worth noticing that,according Lemma \ref{BV}, this ensures that for all $ t\in\R $, one can decompose $ y(t)$ as 
\begin{equation}\label{decompositiony}
y(t)= \nu(t)+ a(t) \delta_{x(t)+x_+(t)}+\sum_{i=1}^\infty a_i(t) \delta_{x_i(t)}
\end{equation}
where $ \nu(t) $ is a non negative non atomic measure with $ \nu(t)\equiv 0 $ on $  ]x(t)+x_+(t), +\infty[ $,  $\{a_i\}_{n\ge 1 } \subset (\R_+)^{\N} $  with $ \sum_{i=1}^\infty a_i(t)<\infty $ and $ x_i(t)<x(t)+x_+(t) $ for all $ i\in \N^* $.

It remains to prove that for all pair of real numbers $(t_1,t_2) $ with  $ t_1<t_2 $,
\begin{equation}\label{tt}
a(t_2)-a(t_1)=\frac{1}{2}\int_{t_1}^{t_2} (u^2-u_x^2)(\tau,q^*(\tau)-) \, d\tau \; .
\end{equation}
Indeed, since $ |u_x|\le u $ and  $u\in C(\R^2) $ this will force $ a$ to be non decreasing continuous function on $ \R $. Then noticing that 
\begin{align*}
(u^2-u_x^2)(t,q^*(t)-)  &=  a(t)(u-u_x)(t,q^*(t)-) \\
 & =a(t)\Bigl(2u(t,q^*(t)-)-a(t)\Bigr) =a(t)\Bigl(2u(t,q^*(t))-a(t)\Bigr)
 \end{align*}
 with  $t\mapsto u(t,q^*(t))\in C(\R) $, the fundamental theorem of calculus will ensure that $ a $ is differentiable on $ \R$. 
 
 Let $ \phi \, :\, \R\to \R_+$ be a non decreasing  $C^\infty $-function such that $ \supp \phi\subset [-1,+\infty[ $ and $ \phi\equiv 1 $ on $\R_+ $. We set $ \phi_\varepsilon=\phi(\frac{\cdot}{\varepsilon}) $. Since $ u $ is continuous and $y(t, \cdot) =0 $ on $ ]x(t)+x_+(t),+\infty[ $ it follows from \eqref{decompositiony} that for all $ t\in \R $,
 $$
 a(t)=\lim_{\varepsilon\searrow 0} \langle y(t),\phi_\varepsilon (\cdot -q^*(t))\rangle \; .
 $$
 Without loss of generality, it suffices to prove \eqref{tt} for $ t_1=0 $ and $ t_2=t\in ]0,1[$.  Let $ \beta>0 $  be fixed, we claim that there exists $ \varepsilon_0>0 $ such that for all $ 0<\varepsilon<\varepsilon_0 $, 
  \begin{equation} \label{claimb}
\Bigl|   \langle y(t),\phi_\varepsilon (\cdot -q^*(t))\rangle-\langle y(0),\phi_\varepsilon (\cdot -q^*(0))\rangle-\frac{1}{2}\int_0^t \int_{\R}  (u^2-u_x^2)(\tau,q^*(t)+\varepsilon z ) 
\phi'(z)\, dz \, d\tau \Bigr|
\le  \beta , \quad \forall t\in ]0,1[
  \end{equation}
  Passing to the limit as $ \varepsilon $ tends to $ 0 $, this  leads to the desired result. Indeed, since $(u^2-u_x^2)(\tau,\cdot) \in BV(\R) $ and 
   $ \phi'\equiv 0 $ on $\R_+$, for any fixed $ (\tau,z) $,  it is clear that 
  $$
  (u^2-u_x^2)(\tau,q^*(\tau)+\varepsilon z ) \phi'(z) \tendsto{\varepsilon\to 0} (u^2-u_x^2)(\tau, q^*(\tau)-) \phi'(z)
  $$
  and,  since it is dominated by $ 2 \|u_0\|_{H^1}^2 \phi' $, the dominated convergence theorem leads to 
  \begin{eqnarray*}
  \int_0^t \int_{\R}  (u^2-u_x^2)(\tau,q^*(t)+\varepsilon z ) 
\phi'(z))\, dz \, d\tau & \tendsto{\varepsilon \to 0}  & \int_0^t \int_{\R}  (u^2-u_x^2)(\tau,q^*(\tau)- ) 
\phi'(z))\, dz \, d\tau \\
&  &= \int_{0}^{t} (u^2-u_x^2)(\tau,q^*(\tau)-) \, d\tau \; .
  \end{eqnarray*}
 To prove \eqref{claimb} we first notice that  according to \eqref{decompositiony} for any $ \alpha>0 $ there exists $ \gamma(\alpha) >0 $ such that 
  \begin{equation}\label{uu}
 \|y(0)\|_{{\mathcal M}(]q^*(0)-\gamma(\alpha),q^*(0)[)}<\alpha \; .
 \end{equation}
 We take $ \varepsilon_0= \gamma(\frac{\beta}{2} e^{-2\|u_0\|_{H^1}})$.  As above, we approximate again $ u(0) $ by a sequence $ \{u_{0,n}\} \subset H^{\infty}(\R)\cap Y_+ $ such that $ \|u_{0,n}\|_{H^1} \le 2 \|u_0\|_{H^1}$ and 
 \begin{equation}\label{approx}
 \|y(0)-y_{0,n}\|_{{\mathcal M}(\R)} \le \beta/4\; .
 \end{equation}
 where $ y_{0,n}=u_{0,n}-\partial_x^2 u_{0,n} $. We again denote respectively  by $ u_n $ and $ y_n $,  the solution to \eqref{CH} emanating from $ u_{0,n} $ and its momentum density $ u_n-u_{n,xx}$.  Let now $ q_n^* \; :\R\to \R $ be  the integral line of $ u_n $ defined by 
  $q_n^*(t)=q_n(t,q^*(0)) $ where $ q_n $ is defined in \eqref{defqn}.
 On account of \eqref{CHy}, it holds
\begin{eqnarray}
\frac{d}{dt} \int_{\R} y_n \phi_\varepsilon (\cdot -q_n^*(t)) &= &-u_n(t,q_n^*(t)) \int_{\R} y_n \phi_\varepsilon'  -\int_{\R} \partial_x (y_n u_n) \phi_{\varepsilon} - \int_{\R} y _n u_{n,x} \phi_{\varepsilon}\nonumber \\
 & = & \int_{\R} \Bigl[ u_n(t,\cdot)-u_n(t,q^*(t))\Bigr] y_n(t,\cdot) \phi_{\varepsilon}'+\frac{1}{2} \int_{\R} (u_n^2(t,\cdot)-u_{n,x}^2(t,\cdot)) \phi_{\varepsilon}' \nonumber\\
  & = &\frac{1}{\varepsilon} \int_{\R} \Bigl[ u_n(t,\cdot)-u_n(t,q^*(t))\Bigr] y_n(t,\cdot) \phi'\Bigl(\frac{\cdot -q_n^*(t)}{\varepsilon}\Bigr)\nonumber \\
   & &+\frac{1}{2} \int_{\R} (u_n^2-u_{n,x}^2)(t,q_n^*(t)+\varepsilon z) \phi'(z) \, dz \nonumber\\
 & = & I_t^{\varepsilon,n}+ II_t^{\varepsilon,n}\; .\label{fd}
\end{eqnarray} 
Since, according to \eqref{dodo}, $ |u_{n,x}|\le \|u_{0,n}\|_{H^1} \le 2   \|u_{0}\|_{H^1} $, 
\begin{eqnarray*}
|I_t^{\varepsilon,n}| & \le &  \frac{2 \|u_0\|_{H^1}}{\varepsilon} \int_{\R} |x-q_n^*(t) | y_n(t,x) \phi'(\frac{x-q_n^*(t)}{\varepsilon}) \, dx  \\
& \le & 2 \|u_0\|_{H^1} \int_{\R}  y_n(t,x) \phi'(\frac{x-q_n^*(t)}{\varepsilon}) \, dx
\end{eqnarray*}
Now,  in view of \eqref{formula} we easily get 
\begin{equation}\label{gg}
e^{-2\|u_0\|_{H^1}}\le q_{n,x}(t,z) \le e^{2\|u_0\|_{H^1}} , \quad \forall (t,z) \in ]-1,1[\times \R \, 
\end{equation}
and  the change of variables $ x=q_n(t,z) $ together with  the identity \eqref{yy}  lead to 
\begin{eqnarray*}
 \int_{\R}  y_n(t,x) \phi'(\frac{x-q_n^*(t)}{\varepsilon}) \, dx & = &  \int_{\R} y_n(t,q_n(t,z)) q_n'(t,z) \phi' (\frac{q_n(t,z)-q_n^*(t)}{\varepsilon}) \, dx \\
 & \le & e^{2\|u_0\|_{H^1}}  \int_{\R} y_n(t,q_n(t,z)) (q_n'(t,z))^2 \phi ' (\frac{q_n(t,z)-q_n^*(t)}{\varepsilon}) \, dz \\
 & \le & e^{2\|u_0\|_{H^1}} \int_{\R}y_n(0,z) \phi' (\frac{q_n(t,z)-q_n^*(t)}{\varepsilon}) \, dz\; .
\end{eqnarray*}
 But, making use of the mean value theorem,  \eqref{gg} and the definition of $ \phi $,  we obtain that, for any $ t \in [0,1]$,  $ z\mapsto  
 \phi ' (\frac{q_n(t,z)-q_n^*(t)}{\varepsilon}) $ is  supported in an interval of length at most $ \varepsilon e^{2\|u_0\|_{H^1}} $. Therefore,  according to \eqref{uu} and  \eqref{approx}, setting $  \varepsilon_0=e^{-2\|u_0\|_{H^1}} \gamma(\frac{\beta}{2} e^{-2\|u_0\|_{H^1}})$, it follows that for all $ 0<\varepsilon<\varepsilon_0 $ and all $ n\in \N $, 
 \begin{equation}\label{ese1}
 \int_0^t |I_\tau^{\varepsilon,n} |d\tau \le 3\beta/4 \; .
 \end{equation}
 To estimate the contribution of $ II_t^{\varepsilon,n}$ we first notice that thanks \eqref{cont1} it holds 
 $$
 u_{n,x} \to u_x \mbox{ in } C([-1,1]; L^2(\R))
 $$
 and for all $ t\in [-1,1]$,  Helly's theorem ensures that
 $$
u_{n,x}(t,\cdot) \to u_x(t,\cdot) \; \mbox{ a.e. on } \R \; .
$$
 Hence, for any fixed $ t\in [-1,1] $ there exists a set $ \Omega_t \subset \R $ of  Lebesgue measure zero such that $ u_x(t) $ is continuous at every point $x\in \R/\Omega_t $ and 
 $$
 u_{n,x}(t,x)\to u_x(t,x) \;, \quad \forall x\in \R/\Omega_t \; .
 $$
 Since $q_n^*(t) \to q^*(t) $, it follows that 
  $$
 u_{n,x}(t,q_n^*(t)+x)\to u_x(t,q^*(t)+x) \;, \quad \forall x\in \R/\tau_{q^*(t)}(\Omega_t) \; .
 $$
 where for any set $ \Lambda\subset \R $ and any $ a\in \R $, $ \tau_a(\Lambda)=\{x-a, a\in \Lambda\}$.\\
 Since the integrand in $ II_t^{\varepsilon,n}$ is bounded by $ 2\|u_0\|_{H^1} \phi'\in L^1(\R) $, it follows from  Lebesgue dominated convergence theorem that for any $ t\in [-1,1] $, 
 $$
  II_t^{\varepsilon,n} \tendsto{n\to \infty} \frac{1}{2}\int_{\R} (u^2-u_x^2)(t, q^*(t)+\varepsilon z) \phi'(z) \, dz \;.
  $$
  Therefore, invoking again Lebesgue dominated convergence theorem, but on $ ]0,t[ $, keeping in mind that $\{|u_n|\} | $ and $ \{|u_{n,x}|\} $ are uniformly bounded on $ \R^2 $ by $ 2\|u_0\|_{H^1} $, we finally deduce that for any fixed $ t\in ]0,1[ $, 
  \begin{equation}\label{ese2}
  \int_0^t II_{\tau}^{\varepsilon,n} \, d\tau \tendsto{n\to \infty} \frac{1}{2}\int_0^t \int_{\R}  (u^2-u_x^2)(\tau,q^*(t)+\varepsilon z ) 
\phi'(z))\, dz \, d\tau 
  \end{equation}
 Now, we fix $ t \in ]0,1[ $ and $ \varepsilon \in ]0,\varepsilon_0[ $. According to the convergence result \eqref{cont2}, for $ n $ large enough it holds 
  $$
  | \langle y_n(t)-y(t),\phi_\varepsilon (\cdot -q_*(t))\rangle|+|\langle y_n(0-y(0),\phi_\varepsilon (\cdot -q_*(0))\rangle| \le \beta/4 \; .
  $$
 which together with \eqref{fd} and \eqref{ese1}-\eqref{ese2} prove the claim \eqref{claimb}.
\end{proof}
\begin{lemma} \label{lem43}
There exists $(a_-,a_+)\in [\frac{\alpha_0}{8}, 2\|u_0\|_{H^1}]^2 $, with $ a_-\le a_+ $ such that 
\begin{eqnarray}
\lim_{t\to  +\infty} u(t,x(t)+x_+(t)) & =& \lim_{t\to +\infty} a(t)/2=a_{+}/2\; ,\\
\lim_{t\to  -\infty} u(t,x(t)+x_+(t)) & = & \lim_{t\to -\infty} a(t)/2=a_{-}/2\; ,
\end{eqnarray}
 \end{lemma}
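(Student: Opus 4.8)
The plan is to first produce the limits from monotonicity and then to upgrade the $L^1$-control on $a'$ into pointwise convergence of $u(t,q^*(t))$ via a Barbalat-type argument. By Proposition \ref{propa}, $a(\cdot)$ is non-decreasing, differentiable and takes values in $[\frac{\alpha_0}{8},2\|u_0\|_{H^1}]$ (recall $E(u)=\|u_0\|_{H^1}^2$ is conserved). A bounded monotone function has limits at $\pm\infty$, so
\[
a_-:=\lim_{t\to-\infty}a(t), \qquad a_+:=\lim_{t\to+\infty}a(t)
\]
exist, satisfy $\frac{\alpha_0}{8}\le a_-\le a_+\le 2\|u_0\|_{H^1}$, and the second equality in each displayed limit of the statement is immediate. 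Moreover, since $a'\ge 0$, one has $\int_\R a'(t)\,dt=a_+-a_-<\infty$, i.e. $a'\in L^1(\R)$.

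The second equalities being settled, everything reduces to showing that $g(t):=2u(t,q^*(t))-a(t)\to 0$ as $t\to\pm\infty$, for then $u(t,q^*(t))=\tfrac12\big(a(t)+g(t)\big)\to a_\pm/2$. Combining \eqref{esta} with the identity established in the proof of Proposition \ref{propa}, namely $(u^2-u_x^2)(t,q^*(t)-)=a(t)\big(2u(t,q^*(t))-a(t)\big)$, gives the scalar ODE $a'(t)=\tfrac12\,a(t)\,g(t)$. Since $a(t)\ge\alpha_0/8>0$ and $a'\ge0$, we get $g\ge0$ and the pointwise bound $0\le g(t)=2a'(t)/a(t)\le \tfrac{16}{\alpha_0}\,a'(t)$, whence $g\in L^1(\R)$.

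It remains to check that $g$ is uniformly continuous, as then a standard Barbalat-type argument (a nonnegative uniformly continuous $L^1(\R)$ function tends to $0$ at $\pm\infty$) yields $g(t)\to0$ and closes the proof. The function $a$ is Lipschitz: by \eqref{esta} and $|u_x|\le u\le\|u_0\|_{H^1}$ (from \eqref{dodo} and Sobolev embedding) one has $|a'(t)|\le\tfrac12\|u_0\|_{H^1}^2$. For $b(t):=u(t,q^*(t))$ I would argue through the smooth approximation $u_n=$ solution emanating from $\rho_n\ast u(0)$, with characteristic $q_n^*(t)=q_n(t,q^*(0))$. Writing $h_n=(1-\partial_x^2)^{-1}(u_n^2+u_{n,x}^2/2)$ and using $\dot q_n^*=u_n(t,q_n^*(t))$ together with the equation $u_{n,t}+u_nu_{n,x}+\partial_x h_n=0$, one gets $\frac{d}{dt}u_n(t,q_n^*(t))=-\partial_x h_n(t,q_n^*(t))$, and since $\partial_x h_n=\tfrac12\partial_x\big(e^{-|x|}\ast(u_n^2+u_{n,x}^2/2)\big)$ the kernel bound $\|e^{-|x|}\|_{L^\infty}=1$ gives $|\partial_x h_n|\le\tfrac12 E(u_n)\lesssim\|u_0\|_{H^1}^2$. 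Thus $b_n$ is Lipschitz with a constant independent of $n$; using \eqref{cont1} (which gives $u_n\to u$ uniformly on compact time intervals, as $H^1\hookrightarrow L^\infty$) and the convergence $q_n^*\to q^*$ coming from continuous dependence of the ODE flow, we have $b_n(t)\to b(t)$ pointwise, so $b$ inherits the same Lipschitz bound and is in particular uniformly continuous.

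The main obstacle I expect is precisely this last point: $u$ is only $H^1$ and $u_x$ jumps exactly at $q^*(t)$, so the Lipschitz-in-time control of $b(t)=u(t,q^*(t))$ cannot be read off directly and must be obtained at the smooth level and passed to the limit; care is needed to justify $q_n^*\to q^*$ (legitimate since $u$ is genuinely Lipschitz in $x$, because $|u_x|\le u\le\|u_0\|_{H^1}$). Once the uniform continuity of $g=2b-a$ is secured, the Barbalat step is routine and gives $g(t)\to0$, hence $u(t,q^*(t))\to a_\pm/2$ as $t\to\pm\infty$.
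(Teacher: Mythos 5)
Your proof is correct and follows essentially the same route as the paper: both rest on the monotonicity of $a$, the identity $a'=\frac{a}{2}\bigl(2u(t,q^*(t))-a\bigr)$ from Proposition \ref{propa}, the integrability of $a'$, and a Barbalat-type argument whose key technical input is the Lipschitz-in-time control of $a(t)$ and of $t\mapsto u(t,q^*(t))$. The only cosmetic differences are that the paper applies Barbalat to $a'$ itself (showing $a'$ is Lipschitz, concluding $a'\to 0$, then dividing by $a\ge \alpha_0/8$) rather than to your $g=2a'/a$, and it gets the Lipschitz bound on $t\mapsto u(t,q^*(t))$ directly from the bounds $\sup|uu_x|\le 2\|u_0\|_{H^1}^2$ and $\sup|u_t|\lesssim \|u_0\|_{H^1}^2$ (the composition-of-Lipschitz-functions argument), whereas you reach the same bound through the smooth-approximation detour.
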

\begin{proof} The existence of the limits at $ \mp \infty $ for $ a(\cdot) $ follows from the monotonicity of $ a(\cdot) $. 
Now, in view of Proposition \ref{propa}, for all $ t\in\R $,  
\begin{eqnarray}
0 \le a'(t)=\frac{1}{2}(u^2-u_x^2)(t,x(t)+x_+(t)-) & =  & \frac{a(t)}{2}(u-u_x)(t,x(t)+x_+(t)-)\nonumber \\
 &= & \frac{a(t)}{2} (2u(t,x(t)+x_+(t))-a(t)) \; .\label{a'}
\end{eqnarray}
Therefore, since $ a $ takes values in $[\alpha_0/8, 2 \|u_0\|_{H^1} ] $,  it remains to prove that $ a'(t)\to 0 $ as $ t\to \pm\infty $. 
Since
$$
\int_{\R} a'(\tau) \, d\tau 	<\infty \; ,
$$
 the desired result will follow if  $ a' $ is Lipschitz on $ \R $. But this is not too hard to check. Indeed, first from \eqref{esta} we have for all $t\in\R $, $ |a(t)-a(0)|\le t \|u_0\|_{H^1}^2 $ and thus $ t\mapsto a(t) $ is clearly Lipschitz on $ \R $. Second, since $ x(t)+x_+(t)=q^*(t) $, it holds 
$$
\frac{d}{dt} u(t,x(t)+x_+(t))= u(t,q^*(t)) u_x(t,q^*(t))+u_t(t,q^*(t)) \; .
$$
But, $ \sup_{(t,x)\in \R^2} |u u_x |\le 2\|u_0\|_{H^1}^2 $
 and 
 \begin{eqnarray*}
 \sup_{(t,x)\in\R^2}  |u_t| & \le &  \sup_{(t,x)\in\R^2} |u u_x |+  \sup_{(t,x)\in\R^2} \Bigl| (1-\partial_x^2)^{-1}\partial_x (u^2+\frac{1}{2} u_x^2) \Bigr| \\
 & \lesssim &  \|u_0\|_{H^1}^2+ \sup_{t\in\R} \|u^2+u_x^2\|_{L^2_x} \\
& \lesssim & \|u_0\|_{H^1}^2 \; .
 \end{eqnarray*}
 Therefore $ t\mapsto u(t,x(t)+x_+(t)) $ is also Lipschitz on $ \R $ which achieves the proof thanks to \eqref{a'}.
\end{proof}
\noindent
\subsection{ End of the proof of Theorem \ref{liouville}.} 
In this subsection, we conclude by proving that the jump of $ u_x(0,\cdot) $ at $ x(0)+x_+(0) $ is equal to $ -2u(0,x(0)+x_+(0))$. This saturates  for all $ v\in Y_+$, the relation between the jump of $ v_x $  and the value of $ v$ at a point $ \xi \in\R $ and forces $ u(0,\cdot) $ to be equal to $ u(0,x(0)+x_+(0)) \varphi(\cdot- x(0)+x_+(0))$.

We use the invariance of the (CH) equation under the transformation $ (t,x) \mapsto (-t,-x) $. This invariance ensures that  $ v(t,x)=u(-t,-x) $ is also a solution of the (C-H) equation that belongs to $ C(\R; H^1(\R) $, with $ u-u_{xx}\in C_{w}(\R;\M_+) $  and  shares the property of $Y$-almost localization with $ x(\cdot) $ replaced by $ -x(-\cdot) $ and the same fonction $ \varepsilon \mapsto R_\varepsilon $  (See Definition \ref{defYlocalized}). 
Therefore, by applying Propositions \ref{pro3}, \ref{propa} and Lemma \ref{lemmaq} for $ v$ we infer that there exists a $C^1 $-function $ x_- \, :\, \R \mapsto ]-\infty,r_0] $ and a derivable non decreasing function $\tilde{a} \, :\, \R \to [\alpha_0/8, 2 \|u_0\|_{H^1} ]$ with $ \lim_{t\to\mp \infty}
\tilde{a}(t)=\tilde{a}_{\mp} $ 
such that 
\begin{equation}\label{defatilde}
\tilde{a}(t)=v_x(t,(-x(-t)+x_+(t) )+)-v_x(t,(-x(-t)+x_+(t) )-), \quad \forall t\in\R \, .
\end{equation}
Moreover,
$$
\lim_{t\to  \mp\infty} v(t,-x(-t)+x_+(t))  = \lim_{t\to \mp\infty} \tilde{a}(t)/2=\tilde{a}_{\mp}/2\; .\\
$$
Coming back to $ u $ this ensures that 
\begin{eqnarray}
\lim_{t\to  +\infty} u(t,x(t)-x_-(-t)) & =& \lim_{t\to -\infty} \tilde{a}(t)/2=\tilde{a}_{-}/2\; ,\\
\lim_{t\to  -\infty} u(t,x(t)-x_-(-t)) & = & \lim_{t\to +\infty} \tilde{a}(t)/2=\tilde{a}_{+}/2\; ,
\end{eqnarray}
At this stage let us underline that  since
$$
 x_-(-t)=\sup \{ x\in\R,\, \supp y(-t)\in [x(t)-x(-t),+\infty[\} 
 $$
 and $ u\not \equiv 0 $ we must have $ x(-t)+x(t)\ge 0 $ for all $ t\in \R $. 
We claim that this forces 
\begin{equation}\label{aaaa}
\tilde{a}_-=\tilde{a}_+=a_-=a_+ \; .
\end{equation}
Note first that since $ \tilde{a}_-\le \tilde{a}_+ $ and $ a_-\le a_+ $, it suffices to prove that $ \tilde{a}_- \ge a_+ $ and $\tilde{a}_+ \le a_- $. This follows easily by a contradiction argument. Indeed,  assume for instance that $ \tilde{a}_- <a_+$.Then, there exists $ t_0\in \R $ and   $ \varepsilon>0 $ such that $ u(t,x(t)-x_-(-t))<u(t,x(t)+x_+(t)) -\varepsilon $ for all $ t\ge t_0 $.  Since 
 $x(t)-x_-(-t)=q(t-t_0,x(t_0)-x_-(-t_0)) $ and $ x(t)+x_+(t)=q(t-t_0,x(t_0)+x_+(t_0)) $, it follows from \eqref{defq} that 
 $$
 x_+(t)+x_-(-t))\ge \varepsilon (t-t_0) \tendsto{t\to +\infty} +\infty 
 $$
 which contradicts  that $ (x_+(t),x_-(t))\in ]-\infty,r_0]^2 $.  Exactly the same argument but with $ t\to - \infty $ ensures that $\tilde{a}_+ \le a_- $ and completes the proof of the claim \eqref{aaaa}. 

We deduce from \eqref{aaaa} that $ a(t)=a+ $ for all $t\in \R $ and thus \eqref{a'}, \eqref{qq}  and  \eqref{defa} force
$$
u(t,x(0)+x_+(0)+\frac{a_+}{2} t )=\frac{a_+}{2}, \quad \forall t\in \R 
$$
and 
$$
 u_x\Bigl(t,(x(0)+x_+(0)+\frac{a_+}{2}t)-\Bigr)-
u_x\Bigl(t,(x(0)+x_+(0)+\frac{a_+}{2}t)+\Bigr) = a_+, \quad \forall t\in \R \; .
$$
In particular, in view of \eqref{decompositiony},
$$
u(0,x(0)+x_+(0))=\frac{a_+}{2} \mbox{ and } y(0)=a_+ \delta_{x(0)+x_+(0)}+\mu
$$
for some $ \mu\in {\mathcal M}_+(\R) $. But this forces $ \mu=0 $ 
since 
$$
 (1-\partial_x^2)^{-1} (a_+  \; \delta_{x(0)+x_+(0)})=\frac{a_+}{2} \exp\Bigl(-|\cdot -(x(0)+x_+(0))|\Bigr)
 $$
 and for any $ \mu\in {\mathcal M}_+(\R) $, with $ \mu\neq 0 $, it holds 
 $$
  (1-\partial_x^2)^{-1} \nu = \frac{1}{2} e^{-|x|} \ast \nu >0 \mbox{ on } \R \; .
 $$
We thus conclude that $ y(0)=a_+ \delta_{x(0)+x_+(0)} $ which leads to  
$$
u(t,x)=\frac{a_+}{2} \exp \Bigl(-\Bigl|x-x(0)-x_+(0)-\frac{a_+}{2} t\Bigr|\Bigr)
$$
\section{Asymptotic stability of the peakon}\label{5}
Let $ c>0 $ and $ u_0 \in Y_+ $  such that 
\begin{equation}\label{stab}
 \| u_0-c\varphi \|_{H^1} < \Bigl(\frac{\varepsilon^2}{3c^2}\Bigr)^4 \; , \quad 0<\varepsilon<c,
 \end{equation}
 then, according to \cite{CS1}, 
\begin{equation}\label{stabo}
 \sup_{t\in\R} \|u(t)-c\varphi(\cdot-\xi(t))\|_{H^1} <\frac{\varepsilon^2}{c}\; ,
  \end{equation}
 where $ u \in C(\R;H^1) $ is the solution emanating from $ u_0$ and  $ \xi(t)\in\R $ is any point where the function $ u(t,\cdot) $ attains its maximum. By the implicit function theorem, one can prove  the following lemma (see for instance\footnote{In \cite{EL2}, this lemma is stated with $ \varphi '$ instead of $ \rho_{n_0}\ast \varphi' $ in \eqref{ort}. However, there is a gap in the proof since the non smoothness of $ \varphi $ makes the $ C^1$ regularity of $x(\cdot)  $  difficult to prove with this orthogonality condition.} \cite{EL2}) whose proof is postponed to the appendix.
 \begin{lemma}\label{modulation}
 There exists $0< \varepsilon_0<1$,  $\kappa_0>0$, $n_0\in \N $  and $ K>1 $ such that if a solution $ u \in C(\R;Y) $ to 
 \eqref{CH} satisfies 
 \begin{equation}\label{gff}
\sup_{t\in\R}  \|u(t)-c \varphi(\cdot-z(t)) \|_{H^1} < c \varepsilon_0 \; ,
\end{equation}
for some function $ z \; :\; \R\to \R $, then there exists a unique function $ x \; : \R\to \R $ such that 
\begin{equation}
\sup_{t\in\R} |x(t)-z(t)| < \kappa_0 \;  \label{distxz}
\end{equation}
 and 
\begin{equation}
\int_{\R} u(t) (\rho_{n_0}\ast\varphi')(\cdot-x(t))=0, \quad \forall t\in\R \; , \label{ort}
\end{equation}
where $ \{\rho_n\} $ is defined in \eqref{rho} and where 
$ n_0 $ satisfies : 
\begin{equation}\label{unic}
\forall y\in [-1/2,1/2], \quad \int_{\R} \varphi (\cdot-y)  (\rho_{n_0}\ast\varphi')=0 \Leftrightarrow y=0 \; .
\end{equation}
Moreover, $ x(\cdot)\in C^1(\R) $  with 
\begin{equation}\label{estc}
\sup_{t\in\R} |\dot{x}(t)- c| \le \frac{c}{8}
\end{equation}
and if 
  \begin{equation}\label{gf}
\sup_{t\in\R}   \|u(t)-c\varphi(\cdot-z(t)) \|_{H^1} <\frac{\varepsilon^2}{c}=c \Bigl(\frac{\varepsilon}{c} \Bigr)^2
\end{equation}
 for $ 0<\varepsilon <c \varepsilon_0 $ then 
\begin{equation}\label{fg}
\sup_{t\in\R} \|u(t)-c\varphi(\cdot-x(t))\|_{H^1} \le K \varepsilon \; .
\end{equation}
 \end{lemma}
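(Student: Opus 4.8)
The plan is to solve the orthogonality relation \eqref{ort} by the implicit function theorem, using the mollified direction $\rho_{n_0}\ast\varphi'$ precisely to recover the smoothness that the bare peakon lacks. Introduce the functional
\[
\Upsilon(v,x):=\int_\R v\,(\rho_{n_0}\ast\varphi')(\cdot-x)\,dx ,
\]
and note that, since $\rho_{n_0}\ast\varphi'$ is smooth and rapidly decaying, $\Upsilon$ is Lipschitz in $v$ for the $H^1$-topology and smooth in $x$, with $\partial_x\Upsilon(v,x)=-\int_\R v\,(\rho_{n_0}\ast\varphi'')(\cdot-x)$. Setting $g(s):=\int_\R\varphi(y)(\rho_{n_0}\ast\varphi')(y-s)\,dy$, a parity check (with $\varphi$ even and $\rho_{n_0}\ast\varphi'$ odd) shows $g$ is odd, so that $\Upsilon(c\varphi(\cdot-z),z)=c\,g(0)=0$. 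First I would fix $n_0$ large: writing $\varphi''=\varphi-2\delta_0$ in the sense of distributions gives $g'(0)=-\int\varphi(\rho_{n_0}\ast\varphi)+2\int\varphi\rho_{n_0}\to 2-\|\varphi\|_{L^2}^2=1$ as $n_0\to\infty$, hence $g'(0)>0$; the resulting strict monotonicity of $g$ on $[-1/2,1/2]$ for $n_0$ large is exactly the uniqueness property \eqref{unic}.

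With $g'(0)$ bounded away from $0$, the quantitative implicit function theorem applies at each time: once $\varepsilon_0$ is small, \eqref{gff} places $u(t)$ in a small $H^1$-ball around $c\varphi(\cdot-z(t))$ uniformly in $t$, and there is a unique $x=x(t)$ with $|x-z|<\kappa_0$ solving $\Upsilon(u(t),x)=0$, which is \eqref{distxz}. Moreover, subtracting $\Upsilon(c\varphi(\cdot-z),z)=0$ and splitting
\[
0=\Upsilon(u,x)=\int_\R\bigl(u-c\varphi(\cdot-z)\bigr)(\rho_{n_0}\ast\varphi')(\cdot-x)+c\bigl(g(x-z)-g(0)\bigr),
\]
the non-degeneracy $|g(x-z)-g(0)|\gtrsim|x-z|$ together with $H^1$-$H^{-1}$ duality yields the key quantitative shift estimate $|x(t)-z(t)|\lesssim c^{-1}\|u(t)-c\varphi(\cdot-z(t))\|_{H^1}$.

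For the regularity I would differentiate \eqref{ort} in time. Because $\rho_{n_0}\ast\varphi'$ is smooth, $t\mapsto\Upsilon(u(t),x)$ inherits the $C^1$-in-time regularity of $u$ (which is $C^1$ into $L^2$ by the equation), while $\partial_x\Upsilon$ is continuous and non-vanishing near $(u(t),x(t))$; the implicit function theorem then gives $x(\cdot)\in C^1(\R)$ with
\[
\dot x(t)=\frac{\int_\R u_t(t)\,(\rho_{n_0}\ast\varphi')(\cdot-x(t))}{\int_\R u(t)\,(\rho_{n_0}\ast\varphi'')(\cdot-x(t))}.
\]
Substituting $u_t=-uu_x-(1-\partial_x^2)^{-1}\partial_x(u^2+u_x^2/2)$ and using that $c\varphi(\cdot-z)$ is an exact travelling wave of speed $c$, both numerator and denominator are close to their values on the peakon, forcing $\dot x\approx c$ and hence \eqref{estc} after shrinking $\varepsilon_0$. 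This is the step where the mollification is indispensable: with the bare $\varphi'$, $\partial_x\Upsilon$ would not be continuous and $x(\cdot)$ could fail to be $C^1$ (the gap in \cite{EL2}).

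Finally, for \eqref{fg} I would combine the shift estimate with the triangle inequality
\[
\|u-c\varphi(\cdot-x)\|_{H^1}\le\|u-c\varphi(\cdot-z)\|_{H^1}+c\,\|\varphi(\cdot-z)-\varphi(\cdot-x)\|_{H^1}.
\]
The first term is $<\varepsilon^2/c$ by \eqref{gf}. For the second, the corner of $\varphi$ makes the translation map $a\mapsto\varphi(\cdot-a)$ only H\"older-$\tfrac12$ continuous into $H^1$, so, with the shift estimate of the previous step, $c\,\|\varphi(\cdot-z)-\varphi(\cdot-x)\|_{H^1}\lesssim c\,|x-z|^{1/2}\lesssim c\,(c^{-1}\varepsilon^2/c)^{1/2}=\varepsilon$; hence $\|u-c\varphi(\cdot-x)\|_{H^1}\lesssim\varepsilon^2/c+\varepsilon\lesssim\varepsilon$, which is \eqref{fg}. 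The main obstacle is exactly this loss of a square root from the non-smoothness of the peakon: it is what prevents the sharper $O(\varepsilon^2/c)$ bound and dictates the smoothed orthogonality direction throughout, and securing the uniform-in-time non-degeneracy together with the $C^1$ character of $x(\cdot)$ for a non-smooth profile is the technical heart of the argument.
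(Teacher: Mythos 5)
Your proposal is correct and follows essentially the same route as the paper's appendix proof: the same mollified orthogonality functional, the implicit function theorem (applied also in the $L^2$ topology to obtain the $C^1$ regularity of $x(\cdot)$, since $u\in C^1(\R;L^2)$ by the equation), the same choice of $n_0$ via non-degeneracy of $\int_{\R}\varphi'(\rho_{n_0}\ast\varphi')\approx 1$, the same quantitative shift estimate, and the same triangle inequality with the H\"older-$\frac12$ translation estimate producing the $K\varepsilon$ bound in \eqref{fg}. The only detail to tighten is that \eqref{unic} requires $g'$ bounded below on all of $[-1/2,1/2]$, not merely $g'(0)>0$; this follows from the same computation at shifted points, since $\int_{\R}\varphi'\,\varphi'(\cdot-y)=(1-|y|)e^{-|y|}\ge \tfrac12 e^{-1/2}$ on $[-1/2,1/2]$, which is exactly how the paper concludes.
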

 At this stage, we fix $ 0<\theta<c $ and we take 
 \begin{equation}\label{defep}
 \varepsilon= \frac{1}{2 K}\min \Bigl(\frac{\theta}{2^8}, c \, \varepsilon_0\Bigr)
 \end{equation}
 For $ u_0\in Y_+ $  satisfying \eqref{stab} with this $ \varepsilon$, \eqref{stabo}  ensures that \eqref{gff} and  thus \eqref{estc} hold. Moreover, \eqref{fg} is satisfies with 
 $$ K\varepsilon \le \min \Bigl(\frac{\theta}{2^9},\frac{c \varepsilon_0}{2}\Bigr) \; .
 $$
  It follows that  $ \dot{x} \ge \frac{3}{4} c $ on $ \R $ and that $u $ satisfies the hypotheses of Lemma \ref{almostdecay} for any $ 0<\alpha<1$ such that 
  \begin{equation}\label{okok}
  (1-\alpha)\ge \frac{\theta}{4c} 
  \end{equation}
  and any  $ 0\le \gamma\le (1-\alpha) c $. In particular, $ u$ satisfies the hypotheses of Lemma \ref{almostdecay} for 
   $ \alpha=1/3$. Note that the hypothesis \eqref{difini} with 
 $$
 \eta_0=\frac{1}{K^8}\min\Bigl( \frac{1}{ 2^{10}},  \frac{\varepsilon_0}{6} \Bigr)^8
 $$
 implies that \eqref{stab} holds with $ \varepsilon $ given by \eqref{defep}.

In the sequel we will make use of the following functionals that measure the quantity $ E(u)+\gamma M(u) $ at the right and at the left of $ u$.
For $ 0\le \gamma\le \frac{2c}{3} $, $ v\in Y$ and $ R>0 $ we set 
 \begin{equation}\label{defJr}
 J_{\gamma,r}^{R}(v)=\dist{v^2+v_x^2+\gamma (v-v_{xx})}{\Psi(\cdot -R)} \; .
 \end{equation}
 and
 \begin{equation}\label{defJl}
 J_{\gamma, l}^R(v)=\dist{v^2+v_x^2+\gamma (v-v_{xx})}{(1-\Psi(\cdot+R))} 
 \end{equation}

 Let $ t_0\in\R $ be fixed.  Fixing $\alpha=\beta=1/3 $ and taking $ z(\cdot)=(1-\alpha) x(\cdot)$, $z(\cdot) $ clearly satisfies \eqref{condz}. Moreover, we have  $ J_{\gamma,r}^{R}(u(t_0, \cdot +x(t_0))=I^{+R}_{t_0}(t_0) $
  where $I^{+R}_{t_0} $ is defined in \eqref{defI}.  Since obviously, 
 $$
 J_{\gamma,r}^R \Bigl(u(t,\cdot+x(t))\Bigr)\ge I^{+R}_{t_0}(t)\; , \quad \forall t\le t_0,
 $$
 we deduce from \eqref{mono} that 
 \begin{equation}\label{monoJr}
 J_{\gamma,r}^R \Bigl(u(t_0,\cdot+x(t_0))\Bigr)\le J_{\gamma,r}^R\Bigl(u(t,\cdot+x(t))\Bigr)+K_0 e^{-R/6} \;  , \quad \forall t\le t_0,
 \end{equation}
 where $ K_0 $ is the constant appearing in \eqref{mono}. 
  Now, let us define 
 \begin{eqnarray*}
 \tilde{I}^{R}_{t_0}(t)  & = & \dist{u^2(t)+u_x^2(t)+c y(t)}{1-\Psi(\cdot-x(t)+R+\alpha(x(t_0)-x(t)))}\\
 & =& E(u(t))+cM(u(t)) -I^{-R}_{t_0}(t) \; ,
 \end{eqnarray*}
 where we take again $ z(\cdot)=(1-\alpha) x(\cdot)$. 
 Since $ M(\cdot) $ and $ E(\cdot) $ are conservation laws, \eqref{mono2} leads to 
   $$
 \tilde{I}^{R}_{t_0}(t)\ge \tilde{I}^{R}_{t_0}(t_0)-C e^{-R/6} , \; \forall t\ge t_0 \; .
 $$
 We thus deduce as above  that $ \forall t \ge t_0 $, 
 \begin{equation}\label{monoJl}
J_{\gamma,l}^R \Bigl(u(t,\cdot+x(t))\Bigr)\ge J_{\gamma,l}^R\Bigl(x(t_0,\cdot+x(t_0))\Bigr)-K_0 e^{-R/6} \; .
 \end{equation}

The following proposition proved in the appendix ensures that, for $\varepsilon$ small enough,  the $ \omega$-limit set for the weak $ H^1$-topology of the orbit of $ u_0 $  is constituted by initial data of  $Y$-almost localized solutions. The crucial tools in the proof are the almost monotonicity properties \eqref{monoJr} and \eqref{monoJl}.
\begin{proposition}\label{propasym}
Let $ u_0 \in Y_+$ satisfying \eqref{stab} with $\varepsilon$ defined as in \eqref{defep}  and let $u \in C(\R;H^1(\R)) $ be  the  solution of \eqref{CH}
 emanating from $ u_0$. For any sequence $ t_n\nearrow +\infty $ there exists a subsequence $ \{t_{n_k}\}\subset \{t_n\} $ and $ \tilde{u}_0\in Y_+ $ such that 
\begin{equation}\label{ppp2}
 u(t_{n_k},\cdot+x(t_{n_k})) \weaktendsto{n_k\to +\infty} \tilde{u}_0 \mbox { in } H^1(\R) 
 \end{equation}
 and 
 \begin{equation}\label{pp2}
 u(t_{n_k},\cdot+x(t_{n_k})) \tendsto{n_k\to +\infty} \tilde{u}_0 \mbox { in } H^1_{loc}(\R) 
 \end{equation}
  where $ x(\cdot) $ is a $ C^1$-function satisfying \eqref{ort}, \eqref{estc} and \eqref{fg}. 
Moreover, the solution of \eqref{CH} emanating from $ \tilde{u}_0 $ is $Y$-almost localized.
\end{proposition}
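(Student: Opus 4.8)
The plan is to run the classical Martel--Merle compactness scheme, adapted to the weak-$H^1$/weak-$\ast$-$\M$ topology of $Y_+$, the genuinely new difficulty being concentrated in the localization of the limit object. First I would produce the candidate limit. By \eqref{fg} the translated orbit $u(t,\cdot+x(t))$ is bounded in $H^1(\R)$, and since $y(t)\ge 0$ the conservation of $M(u)=\dist{y(t)}{1}$ (Proposition \ref{WP}) shows $y(t,\cdot+x(t))$ is bounded in $\M_+$. Along a suitable subsequence $\{t_{n_k}\}$ of any given $t_n\nearrow+\infty$ I would extract a weak $H^1$-limit $\tilde u_0$ and a weak-$\ast$ limit $\tilde y_0\in\M_+$ of the momentum densities; continuity of $1-\partial_x^2$ for these topologies gives $\tilde y_0=(1-\partial_x^2)\tilde u_0$, so $\tilde u_0\in Y_+$, which is \eqref{ppp2}. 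To upgrade to \eqref{pp2}, I would invoke \eqref{bv}: $u_x(t_{n_k},\cdot+x(t_{n_k}))$ is bounded in $BV(\R)$, so Helly's theorem yields a.e. convergence of a further subsequence of the derivatives, dominated convergence promotes this to $L^2_{loc}$, and combined with the Rellich embedding $H^1\hookrightarrow\hookrightarrow L^2_{loc}$ for the functions this gives strong $H^1_{loc}$ convergence.

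Next I would identify the limiting dynamics. By the weak continuity of the flow in $Y_+$ (item {\bf 3.} of Proposition \ref{WP}), the shifted solutions $t\mapsto u(t_{n_k}+t,\cdot+x(t_{n_k}))$ converge, for each $t$, to the solution $\tilde u$ of \eqref{CH} emanating from $\tilde u_0$. Orbital stability \eqref{fg} is weakly lower semicontinuous, so $\tilde u$ stays within $K\varepsilon$ of the peakon orbit; moreover, by \eqref{estc} the modulation parameters $t\mapsto x(t_{n_k}+t)-x(t_{n_k})$ are uniformly Lipschitz with slope in $[\tfrac34 c,\tfrac54 c]$, so Arzel\`a--Ascoli produces a limit and applying the modulation Lemma \ref{modulation} directly to $\tilde u$ furnishes a genuine $C^1$ center $\tilde x(\cdot)$ with $\dot{\tilde x}\ge \tfrac34 c>0$. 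This $\tilde x$ is the candidate translation of Definition \ref{defYlocalized}.

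The heart of the matter is to show that $\tilde u$ is genuinely $Y$-almost localized, i.e. that for every $\eta>0$ the tails $J^R_{\gamma,r}(\tilde u(t,\cdot+\tilde x(t)))$ and $J^R_{\gamma,l}(\tilde u(t,\cdot+\tilde x(t)))$ can be made $\le\eta$ uniformly in $t$ by choosing $R=R_\eta$ large. Orbital stability alone only localizes the orbit up to an error of size $(K\varepsilon)^2$, which does \emph{not} tend to $0$ with $R$; the gain must come from the two almost-monotonicity estimates \eqref{monoJr}--\eqref{monoJl}, together with the fact that in the weak limit the excess mass that has drifted away from the peakon is no longer seen. Concretely, I would pass \eqref{monoJr}--\eqref{monoJl} to the limit using weak lower semicontinuity of the convex energy part and, for the momentum part, weak-$\ast$ convergence of nonnegative measures tested against the nonnegative weights $\Psi(\cdot-R)$ and $1-\Psi(\cdot+R)$ (approximated from below by compactly supported cut-offs and monotone convergence). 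This makes $t\mapsto J^R_{\gamma,r}(\tilde u(t,\cdot+\tilde x(t)))$ almost nonincreasing and $t\mapsto J^R_{\gamma,l}(\tilde u(t,\cdot+\tilde x(t)))$ almost nondecreasing for $\tilde u$ itself.

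I expect the main obstacle to be the final step: evaluating these monotone quantities at $t\to\mp\infty$ through the slower reference frames $z^{\mp R}_{t_0}$ of Lemma \ref{almostdecay} (as in the proof of Proposition \ref{prodecay}), where the weight is pushed off to $\pm\infty$ relative to $\tilde x(t)$ so that the corresponding functional should tend to $0$; the almost-monotonicity then forces $J^R_{\gamma,r}(\tilde u(t))\le K_0 e^{-R/6}$ and, after the reflection $(t,x)\mapsto(-t,-x)$ to swap the two sides, the analogous bound for $J^R_{\gamma,l}$. The delicate point is to certify that these slow-frame functionals genuinely vanish in the limit rather than leaving the orbital-stability floor $(K\varepsilon)^2$: this is exactly where the positivity of the momentum density and its finite speed of propagation \eqref{yy} must be used, to guarantee that the radiation transported away from the peakon cannot reaccumulate at a bounded distance and is therefore lost under the weak limit. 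Once the two tails are shown to be $O(e^{-R/6})$ uniformly in $t$, Definition \ref{defYlocalized} is met and the proposition follows.
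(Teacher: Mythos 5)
Your first half — extraction of $\tilde u_0$ from boundedness in $Y_+$, strong $H^1_{loc}$ convergence, identification of the limit flow via part \textbf{3.} of Proposition \ref{WP}, Arzel\`a--Ascoli for $x(t_{n_k}+\cdot)-x(t_{n_k})$, and the modulation Lemma \ref{modulation} producing the $C^1$ center $\tilde x$ — is exactly the paper's argument. The gap lies in the only genuinely hard step, the $Y$-almost localization of $\tilde u$, and your proposed route for it does not close. Two concrete problems. First, the almost-monotonicity estimates \eqref{monoJr}--\eqref{monoJl} do \emph{not} pass to the weak limit in the direction you need. Writing $u_k(t):=u(t_{n_k}+t,\cdot+x(t_{n_k}+t))$, both $J^R_{\gamma,r}$ and $J^R_{\gamma,l}$ are only \emph{lower} semicontinuous under the convergences available (the weights $\Psi(\cdot-R)$ and $1-\Psi(\cdot+R)$ do not vanish at infinity, so mass may disappear in the limit). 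From $J^R_{\gamma,r}(u_k(t_0))\le J^R_{\gamma,r}(u_k(t))+K_0e^{-R/6}$, $t\le t_0$, you get $J^R_{\gamma,r}(\tilde u(t_0,\cdot+\tilde x(t_0)))\le \liminf_k J^R_{\gamma,r}(u_k(t))+K_0e^{-R/6}$, but this liminf can be strictly larger than $J^R_{\gamma,r}(\tilde u(t,\cdot+\tilde x(t)))$ — precisely because radiation present in $u$ is absent from $\tilde u$ — so almost monotonicity for $\tilde u$ itself does not follow; the cut-off/monotone-convergence device you invoke gives semicontinuity in the useless direction. Second, even granting monotonicity for $\tilde u$, your final step is circular: the vanishing of the slow-frame functional $I^{+R}_{t_0}(t)$ as $t\to-\infty$ is not a consequence of orbital stability (which, as you yourself note, leaves a floor of size $(K\varepsilon)^2$); in the proof of Proposition \ref{prodecay} that vanishing is deduced \emph{from} the $Y$-almost localization of the solution, which is exactly what you are trying to prove for $\tilde u$. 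The appeal to positivity of $y$ and finite propagation speed is a statement of intent, not an argument.

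The paper closes the loop by a quite different mechanism: a contradiction-and-iteration argument performed entirely on the \emph{original} solution $u$, for which \eqref{monoJr}--\eqref{monoJl} are already established. Suppose $\tilde u$ is not almost localized. Then a fixed quantum $\varepsilon_0$ of the conserved functional $G=E+\gamma M$ leaves the window around $\tilde x$ of some radius $R_0$ between times $0$ and $t_{R_0}$. The interior quantity $G_i^{R_0}$ is tested against $\Psi(\cdot+R_0)-\Psi(\cdot-R_0)\in C_0(\R)$, so — unlike the tails — it genuinely converges along the sequence; hence for all large $k$ the solution $u$ loses at least $\tfrac{4}{5}\varepsilon_0$ of $G_i^{R_0}$ between $t_{n_k}$ and $t_{n_k}+t_{R_0}$. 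Since $J^{R_0}_{\gamma,r}(u(\cdot,\cdot+x(\cdot)))$ is almost non-increasing, this quantum must land in $J^{R_0}_{\gamma,l}$, which therefore gains at least $\tfrac{7}{10}\varepsilon_0$ over each such window; and since $J^{R_0}_{\gamma,l}$ is almost non-decreasing, these gains accumulate along a subsequence with gaps at least $t_{R_0}$, giving $J^{R_0}_{\gamma,l}(u(t_{n_k'},\cdot+x(t_{n_k'})))\ge J^{R_0}_{\gamma,l}(u(t_{n_0'},\cdot+x(t_{n_0'})))+\tfrac{3}{5}k\varepsilon_0\to+\infty$, contradicting the boundedness of $G$ (the case $t_{R_0}<0$ is symmetric, using $J_{\gamma,r}$). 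This iteration on $u$, with the weak limit used only to transfer the \emph{interior} loss (tested against a $C_0$ weight) from $\tilde u$ to $u$, is the idea your proposal is missing.
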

So, let $ u_0 \in Y_+ $ satisfying  \eqref{stab} with $\varepsilon$ defined as in \eqref{defep} and let  $ t_n\nearrow +\infty $ be a sequence of positive real numbers. According to the above proposition, \eqref{ppp2}-\eqref{pp2} hold for some subsequence $ \{t_{n_k}\}\subset \{t_n\} $ and $ \tilde{u}_0\in Y_+ $ such that 
 the  solution of \eqref{CH} emanating from $ \tilde{u}_0 $ is $Y$-almost localized. Theorem 
\ref{liouville} then forces 
$$
 \tilde{u}_0= c_0\varphi (\cdot-x_0) 
 $$
 for some $x_0\in\R $ and $ c_0 $ such that $ |c-c_0|\le K \varepsilon\le c/2^9 $. Note that \eqref{ppp2} together with \eqref{fg} imply $\|c_0 \varphi(\cdot-x_0)-c\varphi \|_{H^1} \le K \varepsilon $ and thus \eqref{defep}  and \eqref{pp2} ensure that $|x_0|\ll 1/2 $. Since by \eqref{ppp2}, $\tilde{u}_0 $ satisfies the orthogonality condition \eqref{ort},
 \eqref{unic} then forces  $ x_0=0 $. On the other hand, \eqref{pp2} and \eqref{fg} ensure that $\displaystyle c_0=\lim_{n\to +\infty} \max_{\R} u(t_{n_k}) $ and thus 
  $$
  u(t_{n_k},\cdot+x(t_{n_k})) -\lambda(t_{n_k})\varphi  \weaktendsto{k\to +\infty} 0 \mbox{ in } H^1(\R)
  $$
where we set $
 \lambda(t):=\max_{\R} u(t) , \quad \forall t\in\R $. Since this is the only possible limit, it follows that 
 $$
  u(t,\cdot+x(t)) -\lambda(t)\varphi  \weaktendsto{t\to  +\infty} 0 \mbox{ in } H^1(\R)\; .
  $$
  and thus 
  \begin{equation}\label{pp3}
 u(t,\cdot+x(t))-\lambda(t)\varphi  \tendsto{t\to 0} 0 \mbox { in } H^1_{loc}(\R) 
 \end{equation}
  \subsection{Convergence in $ H^1(]-A,+\infty[) $ for any $ A>0 $.}\label{51}
 Let $ \delta>0 $ be fixed. Choosing  $ R>0 $ such that $J_{0,r}^R(u(0),\cdot +x(0)) <\delta $ and $K_0 e^{-R/6} \le \delta $,  where $ K_0$ is  the constant that appears in \eqref{monoJr}. We deduce   from \eqref{monoJr}   that 
  $ J_{0,r}^R\Bigl(u(t,\cdot+x(t))\Bigr)<2 \delta $ for all $ t\ge 0 $. This fact together with  the local strong convergence \eqref{pp2} clearly ensure that
  \begin{equation}\label{cvcv}
 u(t,\cdot+x(t))-\lambda(t) \varphi  \tendsto{t\to +\infty}  0\mbox{   in } H^1(]-A,+\infty[)  \mbox{ for any } A>0 \; .
 \end{equation}
 \subsection{Convergence of the scaling parameter}\label{52}
 We claim that 
 \begin{equation}\label{cvlambda}
  \lambda(t)\tendsto{t\to +\infty} c_0\;  .
  \end{equation}
  Let us fix again  $ \delta>0 $ and take  $ R>0 $ such that 
  $ K_0 e^{-R/6} <\delta $. \eqref{monoJl} with $ \gamma=0 $ together with the conservation of $ E(u) $ ensure that, for any pair $ (t,t')\in\R^2$ with $ t>t' $ it holds 
  $$
  \int_{\R} (u^2+u_x^2)(t,x) \Psi(x-x(t)+R) \, dx \le  
      \int_{\R} (u^2+u_x^2)(t',x) \Psi(x-x(t')+R) \, dx+\delta 
 $$
  On the other hand, by the strong convergence \eqref{cvcv} and the exponential localization of $ \varphi , \varphi' $ and $ \Psi $, there exists $ T>0 $ such that 
   for all $ t\ge T $, 
   $$
    \Bigl| \int_{\R} (u^2+u_x^2)(t,x) \Psi(x-x(t)+R) \, dx- \lambda^2(t)E(\varphi) \Bigr| \le \delta \; .
   $$
  It thus follows that 
  $$
  \lambda^2(t) E(\varphi)\le \lambda^2(t') E(\varphi)+3 \delta , \quad \forall t>t'>T \; .
  $$
  Since $ \delta>0 $ is arbitrary, this forces $\lambda $ to have a limit at $ +\infty $ and completes the proof of the claim.
   \subsection{Convergence of $\dot{x} $} \label{53}
   We set  $W(t,\cdot):=c_0\varphi( \cdot-x(t))$ and $ \eta(t)=u(t)-c_0 \varphi(\cdot-x(t))=u(t)-W(t)$ for all $ t\ge 0 $. Differentiating \eqref{ort} with respect to time and using that $ \varphi-\varphi''=2 \delta_0 $, we get 
 $$
\int_{\R} \eta_t  \partial_x W =\dot{ x} \, \langle
\partial_x^2 W \, ,\,   \eta  \rangle_{H^{-1}, H^1}= - 2c_0 \dot{ x} \,  \eta(x(t))+ \dot{ x}\int_{\R}   \eta W, \; 
$$
 and thus
\begin{equation}
\Bigl|\int_{\R}  \eta_t  \partial_x W \Bigr|\le  3 c_0|\dot{ x}-c_0| \| \eta\|_{H^1}+ 2 c_0^2 | \eta(x(t))|+c_0 | \int_{\R}   \eta W|
 \; . \label{huhu}
\end{equation}
Substituting $ u $ by $ \eta+W$ in \eqref{CH}  and
using the  equation satisfied by $W$, we obtain the following equation satisfied by $ v$ :
  \arraycolsep1pt
 \begin{eqnarray}
  \eta_t&  - & (\dot{x}-c_0)  \partial_x W
= -\ \partial_x  \eta W-(1-\partial_x^2)^{-1}\partial_x \Bigl(2 \eta W +  \eta_x W_x\Bigr)\; . \nonumber
 \end{eqnarray}
 \arraycolsep5pt
 At this stage it is worth noticing that \eqref{cvcv}-\eqref{cvlambda} ensures that 
 \begin{equation}\label{fin}
 |\eta(x(t))|+\|\eta_x(t) W(t)\|_{L^2} +\|\eta(t) W(t)\|_{L^2} + \|\eta_x(t) W_x(t)\|_{L^2} \tendsto{t\to +\infty} 0 \; .
 \end{equation}
Taking the $ L^2 $-scalar product with $ \partial_x W$, integrating by parts, using that $ \|\partial_x W\|_{L^2}^2=c_0^2$  and the
 decay of $ \varphi $ and its first derivative,  \eqref{huhu}, \eqref{fin}, \eqref{fg}  and the definition of $ \varepsilon$, we get 
 $$
 |\dot{x}(t)-c_0|\Bigl(c_0^2 -3 c_0 \frac{c}{2^8}  \Bigr) \tendsto{t\to \infty} 0 \; .
 $$
This yields the desired result since $|c-c_0|\le K\varepsilon =\frac{c}{2^8} $ clearly forces $ c\le 2 c_0$. 

   \subsection{Strong $ H^1$-convergence on $ ]\theta t ,+\infty[$}\label{54}
   We deduce  from \eqref{cvlambda} that  
 $$
 u(t,\cdot)-c_0 \varphi (\cdot-x(t)) \weaktendsto{t\to +\infty}  0\mbox{   in } H^1(\R)  
 $$
 and 
 \begin{equation}\label{jw}
 u(t,\cdot+x(t))-c_0 \varphi  \tendsto{t\to +\infty}  0\mbox{   in } H^1(]-A,+\infty[)  \mbox{ for any } A>0 \; .
 \end{equation}
 \eqref{cvforte} will follow by combining these  convergence results with the almost non increasing property  \eqref{mono}. 
 Indeed,  let us fix $ \delta>0$ and take $ R\gg 1 $ such that 
 \begin{equation}\label{sww}
 \|\varphi\|_{H^1(]-\infty,-R/2[}^2< \delta \quad\mbox{and}\quad  \|\Psi-1\|_{L^\infty(]R/2,+\infty[)} <\delta 
\end{equation}
where $ \Psi $ is defined in \eqref{psipsi}. According to the above convergence result there exists $ t_0>0 $ such that 
  $ x(t_0)>R $ and  for all $ t\ge t_0 $, 
 $$
 \|(\eta^2+\eta_x^2)(t,\cdot+x(t)) \|_{H^1(]-R/2,+\infty[)} < \delta \, , 
 $$
 where we set $\eta=u(t)-c_0 \varphi(\cdot-x(t)) $. In particular, \eqref{sww} ensures that
 \begin{equation}\label{swww}
\Bigl| E(\varphi)-\int_{\R} \Bigl( u(t,\cdot+x(t))\varphi +u_x(t,\cdot+x(t))\varphi_x \Bigr) \Psi(\cdot+y) \Bigr| \lesssim \delta, \quad \forall y\ge R,\, \forall t\ge t_0 \; ,
 \end{equation}
 We set  $ z(t)=\frac{\theta}{2} t$ and notice that  \eqref{okok} ensures that \eqref{condz} is satisfied with $ 1-\alpha=\frac{\theta}{4c} $ and $ \beta=1/4$. Moreover, 
  as noticing in the beginning of this section (see \eqref{okok}), $ u $ satisfies the hypotheses of Lemma \eqref{almostdecay} for such $ \alpha $. 
 According to \eqref{mono2} with $ \gamma=0$, we thus get for all $t\ge t_0$,
 $$
 \int_{\R} (u^2+u_x^2)(t,\cdot) \Psi(\cdot-x(t_0)-\frac{\theta}{2}(t-t_0)+R) \le  \int_{\R} (u^2+u_x^2)(t_0,\cdot) \Psi(x-x(t_0)+R )+K_0(\alpha)  e^{-R/6}
 $$
  which leads to  
 \begin{align*}
 \int_{\R} (\eta^2+\eta_x^2)(t,\cdot) & \Psi\Bigl(\cdot-x(t_0)-\frac{\theta}{2}(t-t_0)+R\Bigr)=
 \int_{\R} (u^2+u_x^2)(t,\cdot) \Psi\Bigl(\cdot-x(t_0)-\frac{\theta}{2}(t-t_0)+x_0\Bigr)  \\
 &-2 c_0
 \int_{\R} (u(t) \varphi(\cdot-x(t)) +u_x(t) \varphi_x(\cdot-x(t))  \Psi\Bigl(\cdot-x(t_0)-\frac{\theta}{2}(t-t_0)+R\Bigr)\\
 &+ c_0^2 \int_{\R} (\varphi^2+\varphi_x^2)(t,\cdot-x(t))  \Psi\Bigl(\cdot-x(t_0)-\frac{\theta}{2}(t-t_0)+R\Bigr) \\
 &  \le   \int_{\R} (u^2+u_x^2)(t_0,\cdot) \Psi(\cdot-x(t_0)+R )+K_0(\alpha)  e^{-R/6}\\
 &-2 c_0
 \int_{\R} (u(t_0) \varphi(\cdot-x(t_0)) +u_x(t_0) \varphi_x(\cdot-x(t_0))  \Psi(\cdot-x(t_0)+R )+ C\, \delta \\
 &+ c_0^2 \int_{\R} (\varphi^2+\varphi_x^2)(t_0,\cdot-x(t_0)) \Psi(\cdot-x(t_0)+R )  +C e^{-R/6}\\
 &  \lesssim  \int_{\R}(\eta^2+\eta_x^2)(t,\cdot)  \Psi(\cdot-x(t_0)+R)+C( e^{-R/6}+\delta) \\
 & \lesssim \delta + e^{-R/6}
 \end{align*}
 where in the next to the  last step we used that $ \varphi $ decays exponentially fast and \eqref{swww} since 
  $x(t)-x(t_0)-\frac{\theta}{2}(t-t_0)+R \ge R $ for all $ t\ge t_0$.
  Taking $R$ large enough and $ t_1>t_0$ such that  $ \theta t_1\ge x(t_0)+\frac{\theta}{2} (t_1-t_0)-R $, it follows that for 
  $t\ge t_1 $, 
  $$
 \int_{\R} (\eta^2+\eta_x^2)(t,\cdot)  \Psi(\cdot-\alpha t) \lesssim \delta
 $$
 which completes the proof of Theorem \ref{asympstab} with $ c^*=c_0$.
  \section{Asymptotic stability of train of peakons}\label{6}
In \cite{EL2} the orbital stability in $ H^1(\R) $ of  well ordered trains of peakons is established. More precisely, the following theorem is proved
\footnote{Here again, in  the  statement  given in \cite{EL2}, $ \partial_x \varphi_{c_i} $ appears instead of $ \rho_{n_0} \ast \partial_x \varphi_{c_i} $ in the orthogonality condition \eqref{mod2} and thus there is a gap in the proof of the  $ C^1 $-regularity of the functions $ x_i $, $ i=1,..,N$.  The modifications to get the statement below are exactly the same as the ones to get Lemma  \ref{modulation} that is proven in the appendix.}:
\begin{theorem}[\cite{EL2}]\label{mult-peaks}
Let be given $ N $ velocities $c_1,.., c_N $ such that $0<c_1<c_2<..<c_N $.
There exist   $n_0\in \N $ satisfying \eqref{unic},  $ A>0 $, $ L_0>0 $ and $ \varepsilon_0>0 $ such that if  $ u\in C(\R;H^1) $ is
   the global solution of (C-H) emanating from $ u_0\in Y_+ $, with 
 \begin{equation}
 \|u_0-\sum_{j=1}^N \varphi_{c_j}(\cdot-z_j^0) \|_{H^1} \le \varepsilon^2 \label{ini}
 \end{equation}
 for some  $ 0<\varepsilon<\varepsilon_0$ and $ z_j^0-z_{j-1}^0\ge L$,
with $ L>L_0 $, then there exist $ N $ $C^1$-functions $t\mapsto x_1(t), ..,t \mapsto x_N(t) $ uniquely determined such that
\begin{equation}
\sup_{t\in\R+} \|u(t,\cdot)-\sum_{j=1}^N \varphi_{c_j}(\cdot-x_j(t)) \|_{H^1} \le
A\sqrt{\sqrt{\varepsilon}+L^{-{1/8}}}\;  \label{ini2}
\end{equation}
and 
\begin{equation}
\int_{\R} \Bigl( u(t,\cdot) -\sum_{j=1}^N \varphi_{c_j}(\cdot- x_j(t)) \Bigr)
(\rho_{n_0}\ast \partial_x \varphi_{c_i}) (\cdot - x_i(t)) \, dx = 0 \; , \quad i\in\{1,..,N\}. \label{mod2}
\end{equation}
Moreover,   for $ i=1,..,N $ 
\begin{equation}\label{difdif}
|\dot{x}_i-c_i| \le A \sqrt{\sqrt{\varepsilon}+L^{-{1/8}}}, \quad \forall t\in\R_+  \; .
\end{equation}
 \end{theorem}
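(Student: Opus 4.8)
The statement is the orbital counterpart of the asymptotic theorem of the previous section, and I would prove it by localizing the single-peakon variational argument of Constantin and Strauss \cite{CS1} around each bump of the train, the localization being controlled by the almost-monotonicity of Lemma \ref{almostdecay}. Recall the two optimal inequalities that drive the single-peakon proof: for every $ v\in Y_+ $, writing $ M=\max_\R v $ attained at $ \xi $, one has the energy identity
\begin{equation*}
E(v)-E(\varphi_c)=\|v-\varphi_c(\cdot-\xi)\|_{H^1}^2+4c(M-c),
\end{equation*}
together with the polynomial inequality $ F(v)\le M\,E(v)-\tfrac{2}{3}M^3 $. When $ E(v)$ and $ F(v)$ equal the peakon values $ E(\varphi_c)=2c^2$ and $ F(\varphi_c)=\tfrac43 c^3 $, the latter reduces to $ \tfrac{2}{3}(M-c)^2(M+2c)\le 0 $, which forces $ M=c $ and, through the first identity, $ v=\varphi_c(\cdot-\xi) $. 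The plan is to reproduce this rigidity mechanism bin by bin.

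\textbf{Modulation.} First I would run the implicit function theorem with $ N $ parameters, exactly as in Lemma \ref{modulation}, on the set of times where $ u(t) $ stays $ O\bigl(\sqrt{\sqrt\varepsilon+L^{-1/8}}\bigr) $-close to a well ordered sum $ \sum_j\varphi_{c_j}(\cdot-x_j) $. This produces unique $ C^1 $ functions $ x_1,\dots,x_N $ satisfying the orthogonality conditions \eqref{mod2}; the presence of the mollified profile $ \rho_{n_0}\ast\partial_x\varphi_{c_i} $ rather than $ \partial_x\varphi_{c_i} $ is precisely what restores the $ C^1 $-regularity and is handled exactly as in the proof of Lemma \ref{modulation} in the appendix. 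The orthogonality together with the equation yields the coarse speed bound \eqref{difdif}, hence the linear separation $ x_{j+1}(t)-x_j(t)\ge L+\tfrac12(c_{j+1}-c_j)t $.

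\textbf{Localization.} For each $ k\in\{1,\dots,N-1\} $ I would fix an intermediate speed $ \sigma_k\in(c_k,c_{k+1}) $ and an interface $ z_k(t) $ drifting at speed $ \sigma_k $ and initially placed in the gap between the $ k $-th and $ (k+1) $-th peakons. Applying Lemma \ref{almostdecay} with $ z=z_k $ (which satisfies \eqref{condz}) shows that the right-localized functional $ \dist{u^2+u_x^2+\gamma y}{\Psi(\cdot-z_k(t))} $ is almost non-increasing up to $ K_0 e^{-R/6} $. Combined with the conservation of $ E $ and $ M $ and the exponential decay of the profiles, the initial bound \eqref{ini} gives local energies and local $ F $ with $ E_k(0)=E(\varphi_{c_k})+O(\varepsilon^2+e^{-L}) $ and $ F_k(0)=F(\varphi_{c_k})+O(\varepsilon^2+e^{-L}) $, and the almost-monotonicity then confines each $ E_k(t),F_k(t) $ to within $ O(\sqrt\varepsilon+L^{-1/8}) $ of these values for all $ t\ge 0 $. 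On each bin $ J_k $ I would then insert the localized $ E_k,F_k,M_k $ into the two Constantin-Strauss inequalities: the polynomial inequality forces $ |M_k-c_k|\lesssim\sqrt{\sqrt\varepsilon+L^{-1/8}} $, whence the local energy identity gives $ \|u-\varphi_{c_k}(\cdot-x_k)\|_{H^1(J_k)}\lesssim\sqrt{\sqrt\varepsilon+L^{-1/8}} $. Summing over $ k $, with the separation making the cross terms negligible, yields \eqref{ini2}, and \eqref{difdif} follows from the refined modulation.

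The main obstacle is the localization itself. The Constantin-Strauss inequalities are intrinsically global, and cutting $ E $ and $ F $ into bins produces error terms both from the spatial cut-offs and from the nonlocal operator $ (1-\partial_x^2)^{-1} $ entering $ F $ and the flow; moreover the monotonicity is one-sided (only the energy to the \emph{right} of an interface is controlled), so the two-sided control of each bin must be extracted from the interplay between the two conservation laws and the monotone functionals rather than from monotonicity alone. The exponential decays \eqref{psipsi} and \eqref{dodo}, together with the exponentially small defect $ K_0 e^{-R/6} $, are what make all these errors summable and produce the exponents appearing in \eqref{ini2}--\eqref{difdif}; this bookkeeping, carried out in \cite{EL2}, is the technical heart of the argument.
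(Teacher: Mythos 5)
Your sketch is correct and follows essentially the route the paper relies on: the paper does not reprove this theorem but cites \cite{EL2} (whose proof is exactly your scheme of $N$-parameter modulation, almost-monotonicity of right-localized energy--momentum functionals, and bin-by-bin application of the two Constantin--Strauss inequalities), adding only the footnoted fix that the orthogonality must use the mollified profile $\rho_{n_0}\ast\partial_x\varphi_{c_i}$, handled exactly as in Lemma \ref{modulation} --- which your modulation step also captures. The only caveat is a small inconsistency in your exponent chain (confining $E_k,F_k$ to within $\delta=O(\sqrt{\varepsilon}+L^{-1/8})$ would give $\|u-\varphi_{c_k}\|_{H^1(J_k)}\lesssim\delta^{1/4}$, not $\sqrt{\delta}$, so the localized conservation laws must in fact be pinned to $O(\varepsilon+L^{-1/4})$ accuracy), but you explicitly defer this bookkeeping to \cite{EL2}, which is where the paper itself places it.
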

 Combining this result with the asymptotic stability of a peakon established in the preceding section, we are able to extend the asymptotic result to a train of well ordered peakons by following the strategy developped in \cite{MMT} (see also \cite{EM}).
 \begin{theorem}\label{asympt-mult-peaks}
Let be given $ N $ velocities $c_1,.., c_N $ such that $0<c_1<c_2<..<c_N $ and $ 0<\theta_0<c_1/4 $.
There exist   $ L_0>0 $
 and $ \varepsilon_0>0 $ such that if  $ u\in C(\R;H^1) $ is
   the solution of (C-H) emanating from $ u_0\in Y_+ $, with 
 \begin{equation}
 \|u_0-\sum_{j=1}^N \varphi_{c_j}(\cdot-z_j^0) \|_{H^1} \le \varepsilon_0^2 
\quad  \mbox{ and } \quad  z_j^0-z_{j-1}^0\ge L_0,\label{inini}
 \end{equation}
then there exist $0< c_1^*<..<c_N^*  $ and $ C^1$-functions $t\mapsto x_1(t), ..,t\mapsto x_N(t) $,  with  $ \dot{x}_j(t) \to c_j^* $ as $ t\to +\infty $, such that,

\begin{equation}\label{mul1}
u(\cdot+x_j(t)) \weaktendsto{t\to +\infty} \varphi_{c_j^*}  \mbox{ in } H^1(\R), \; \forall j\in \{1,..,N\} \; .
\end{equation}

 Moreover, 
\begin{equation}\label{mul2}
u-\sum_{j=1}^N \varphi_{c_j^*}(\cdot-x_j(t)) \tendsto{t\to +\infty} 0 \mbox{ in } H^1(]\theta_0 t,+\infty[)\; .
\end{equation}
 \end{theorem}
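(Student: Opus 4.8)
The plan is to follow the decoupling strategy of Martel--Merle--Tsai \cite{MMT}, reducing the train to $N$ essentially decoupled single-peakon problems and then running, bump by bump, the argument of Section \ref{5}. The data are supplied by the orbital stability Theorem \ref{mult-peaks}: it yields $C^1$ modulation functions $x_1(t)<\dots<x_N(t)$, the smallness bound \eqref{ini2} and the speed estimates \eqref{difdif}. Since the prescribed speeds are strictly ordered, \eqref{difdif} gives $\dot x_{j+1}(t)-\dot x_j(t)\ge \frac12(c_{j+1}-c_j)>0$ once $\varepsilon_0$ is small and $L_0$ large, so the gaps $x_{j+1}(t)-x_j(t)$ grow at least linearly and the bumps separate as $t\to+\infty$; moreover $\dot x_1\ge \frac34 c_1>3\theta_0$, so every bump lies well inside $]\theta_0 t,+\infty[$ for large $t$. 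In each gap, \eqref{ini2} together with the exponential decay of the peakon tails makes $u(t)$ uniformly small in $L^\infty$, a fact that will replace the global localization hypothesis \eqref{loc}.

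First I would set up localized energy functionals at intermediate speeds. For $j\in\{2,\dots,N\}$ fix $\sigma_j\in\,]c_{j-1},c_j[$, set $\sigma_1\in\,]\theta_0,c_1[$, and for $0\le\gamma\le\frac23 c_1$ consider half-line functionals of the type \eqref{defJr}--\eqref{defJl} centred on a point moving at speed $\sigma_j$. Because $\sigma_j$ lies strictly between two consecutive prescribed speeds, the $\Psi'$-bump in the computation of Lemma \ref{almostdecay} is concentrated in the $j$-th gap, where $u$ is small; repeating that computation --- the momentum contribution is absorbed by the energy contribution exactly because $\gamma\le\frac32(1-\alpha)c_0$ --- shows that the part of $E(u)+\gamma M(u)$ to the right of the $\sigma_j$-line is almost non-increasing towards the past and almost non-decreasing towards the future, up to an error $K_0 e^{-R/6}$, the hypothesis \eqref{loc} being replaced by the smallness of $u$ on the gap straddling the $\sigma_j$-line. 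Differencing the functionals attached to $\sigma_j$ and $\sigma_{j+1}$ produces an almost-monotone quantity measuring $E+\gamma M$ in the slab between the two lines, i.e. the energy carried by the $j$-th bump alone, which the separation of the bumps forces to converge as $t\to+\infty$.

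Next, fixing a peak index $j$ and an arbitrary sequence $t_n\nearrow+\infty$, I would extract, exactly as in Proposition \ref{propasym}, a subsequence along which $u(t_n,\cdot+x_j(t_n))\rightharpoonup\tilde u_{0,j}$ in $H^1(\R)$ and in $H^1_{loc}(\R)$, with $\tilde u_{0,j}\in Y_+$ by the weak closedness of $Y_+$ and Proposition \ref{WP}. The decisive point is that the solution $\tilde u_j$ emanating from $\tilde u_{0,j}$ is $Y$-almost localized: under the centring $\cdot+x_j(t_n)$ the other bumps escape to $\pm\infty$, since $x_k(t_n)-x_j(t_n)\to\pm\infty$ for $k\ne j$, while the slab monotonicity above prevents the energy and the momentum mass of the $j$-th bump from leaking out of a fixed window uniformly in time. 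The Liouville Theorem \ref{liouville} then forces $\tilde u_{0,j}=c_j^*\varphi(\cdot-x_{0,j})$, and, as in Section \ref{5}, the orthogonality \eqref{mod2} inherited in the limit together with \eqref{unic} forces $x_{0,j}=0$. The local strong convergence identifies $c_j^*$ as the limit of the height of the $j$-th bump; the analogue of Subsection \ref{52} shows this height converges, so the limit is independent of the subsequence and \eqref{mul1} follows, while passing to the limit in \eqref{difdif} gives $c_j^*>0$ and $c_1^*<\dots<c_N^*$.

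Finally, the convergence $\dot x_j(t)\to c_j^*$ is obtained as in Subsection \ref{53}, by differentiating \eqref{mod2} in time and feeding the local strong convergence into the resulting modulation identity. The strong convergence \eqref{mul2} on $]\theta_0 t,+\infty[$ then follows as in Subsection \ref{54}: combine the local strong convergence around each bump with the almost-monotonicity \eqref{mono2} applied along a line of speed $\frac{\theta_0}{2}$, summing the $N$ contributions whose total captured energy is $\sum_{j=1}^N E(\varphi_{c_j^*})$. I expect the main obstacle to be the third step --- showing that each weak limit $\tilde u_{0,j}$ generates a genuinely $Y$-almost localized solution. This requires the slab monotonicity to hold uniformly in time and to cooperate with the $\M_+$-tightness of Proposition \ref{WP}, so that no momentum mass is lost at spatial infinity when passing to the limit; it is precisely the non-negativity of $y$ and the finite speed of propagation of the momentum density, already exploited in Section \ref{5} and in Proposition \ref{pro3}, that make this control possible.
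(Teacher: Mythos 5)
Your overall architecture --- Theorem \ref{mult-peaks} for modulation and separation, then a bump-by-bump rerun of Section \ref{5} ending with Theorem \ref{liouville} --- is the right one, and you correctly single out the dangerous step: proving that each asymptotic object is $Y$-almost localized. But the device you propose for that step, slab functionals between lines of constant speeds $\sigma_j\in\,]c_{j-1},c_j[$ anchored in the gaps, cannot do that job. First, the width of each slab grows linearly in time, so convergence of the slab energy says nothing about leakage out of a \emph{fixed} window around $x_j(t)$: energy could detach from the $j$-th bump and drift at some intermediate speed, remaining inside the slab forever. The contradiction argument of Proposition \ref{propasym} genuinely requires almost monotonicity of functionals anchored at $x_j(t_0)\pm R$, for arbitrarily large $t_0$, with an error of the form $K_0e^{-R/6}$ that tends to $0$ as $R\to\infty$. (Incidentally, the monotonicity direction you state is reversed: as in \eqref{mono}--\eqref{mono2}, the part of $E+\gamma M$ to the right of such rightward-moving lines is almost non-\emph{increasing} forward in time.)

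Second, when one tries to prove precisely that anchored right-side monotonicity around an inner bump $i<N$, the weight anchored at $x_i(t_0)+R$ and traveling backward in time at a speed $<c_i$ must cross the trajectories of the faster bumps, at times which go to $+\infty$ with $t_0$; there $u$ is of size $c_{i+1}$, not small, so the error terms $J_1,J_2$ of Lemma \ref{almostdecay} are out of control. Your constant-speed gap lines avoid bump crossings only because they are anchored at $t=0$; but then their distance to the bumps is bounded below only by $\sim L_0$, so the associated monotonicity error has a fixed floor $\sim e^{-cL_0}$, independent of $R$ --- not good enough to beat the arbitrary leakage amount $\varepsilon_0$ appearing in the localization argument. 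This is exactly the difficulty the paper isolates and resolves with Lemma \ref{yvan}: a two-phase backward path which first travels at speed $\tfrac34\dot x_i$ until it meets the midpoint curve $y_{i+1}=(x_i+x_{i+1})/2$, and then follows $y_{i+1}$ down to the time $t_R^{i+1}$ of \eqref{deftRi} at which the gap equals $2R$, producing the error $K_0e^{-R/24}$ of \eqref{monoJr1}; your gap lines are close in spirit to $y_{i+1}$, but the concatenation with the bump-anchored segment and the re-anchoring at $t_R^{i+1}$ are the missing ideas. In addition, the paper runs a downward induction on the bump index, subtracting the already identified faster peakons $W_{>i}$ (the implication \eqref{hyp} $\Rightarrow$ \eqref{hyp2}) before identifying $c_i^*$ and proving $\dot x_i\to c_i^*$; without this subtraction your analogues of Subsections \ref{52}--\ref{54} do not close either, since around $x_i$ the half-line functionals still see the faster bumps. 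So the proposal, as written, has a genuine gap at its central step.
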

 Finally, we will make use of the fact that Camassa-Holm equation possesses special solutions called multipeakons given by 
 $$
 u(t,x)=\sum_{i=1}^N p_i(t) e^{-|x-q_i(t)|} 
 $$
 where $ (p_i(t),q_i(t)) $, $ i=1,..,N$,  satisfy a differential Hamiltonian system (cf. \cite{CH1}). In \cite{Beals0} (see also \cite{CH1}), the limits as $ t\to \mp \infty $ of $p_i(t) $ and $ \dot{q}_i(t) $, $ i=1,..,N$, are determined. Combining the orbital stability of well ordered train of peakons, the continuity with respect to initial data in $ H^1(\R) $ and the asymptotics of multipeakons, the $ H^1$-stability of the variety 
 $$
{\mathcal N}:= \Bigl\{ v=\sum_{i=1}^N p_j e^{-|\cdot-q_j|}, \,
(p_1,..,p_N)\in (\R_+)^N , \, q_1<q_2<..<q_N  \Bigr\} \; .
 $$
 is proved in (\cite{EL2}, Corollary 1.1). Gathering this last result  with the asymptotics of the multipeakons and Theorem \ref{asympt-mult-peaks}, the following asymptotic stability result for not well ordered train of peakons can be deduced quite  directly.
\begin{corollary} \label{cor-mult-peaks}
Let be given $ N $  positive real numbers $ p_1^0,.., p_N^0 $,  $ N
$ real numbers $ q_1^0< ..< q_N^0 $ and let $ 0<\lambda_1<\cdot\cdot<\lambda_N $ be the N distinct eigenvalues of the matrix $ (p_j^0 e^{-|q_i^0-q_j^0|/2})_{1\le i,j\le N} $. 
For any $ B> 0 $ there exists a positive function $ \varepsilon $  with $ \varepsilon(y) \to 0 $ as $ y\to 0 $ and $ \alpha_0>0 $ such that if $ u_0\in
H^1(\R) $ satisfies $ m_0:=u_0-u_{0,xx} \in {\mathcal M}_+(\R) $ with
\begin{equation}
\|m_0\|_{\mathcal M}\le B \quad  \mbox{ and }\quad
\|u_0-\sum_{j=1}^N p_j^0 \exp (\cdot-q_j^0) \|_{H^1}\le \alpha
\label{ini3}
\end{equation}
for some $ 0<\alpha<\alpha_0 $, 
 then there exists $0<c_1^*<\cdot\cdot <c_N^*  $  and $ C^1$-functions $ (x_1,..,x_N) $ with 
 $$
 |c_i^*-\lambda_i|\le \varepsilon(\alpha) \quad \text{and} \quad \lim_{t\to +\infty} \dot{x}_i(t) = c_i^* \; ,\quad \forall i\in \{1,..,N\},
 $$
    such that 
\begin{equation}\label{coromul2}
u-\sum_{i=1}^N \varphi_{c_i^*}(\cdot-x_i(t)) \tendsto{t\to +\infty} 0 \mbox{ in } H^1(]\frac{\lambda_1 t }{4} ,+\infty[)\; .
\end{equation}
\end{corollary}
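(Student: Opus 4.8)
The plan is to deduce the result from three inputs: the $H^1$-stability of the multipeakon variety $\mathcal N$ (\cite{EL2}, Corollary~1.1), valid for data in $Y_+$ with $\|m_0\|_{\M}\le B$; the explicit long-time asymptotics of the pure $N$-peakon solutions (\cite{Beals0},\cite{CH1}), which say that \emph{any} initial peakon configuration sorts itself, as $t\to+\infty$, into a well-ordered and infinitely separated train whose limiting velocities are exactly the eigenvalues $0<\lambda_1<\dots<\lambda_N$ of $(p_j^0e^{-|q_i^0-q_j^0|/2})_{i,j}$; and the asymptotic stability of well-ordered trains, Theorem~\ref{asympt-mult-peaks}. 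The idea is that, after a large but finite transition time, the genuine solution $u$ looks like a well-ordered, well-separated train with velocities close to the $\lambda_i$, so that Theorem~\ref{asympt-mult-peaks} applies from that time on.

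\textbf{Transition time and transfer.} First I would let $U$ be the exact multipeakon solution with $U(0)=\sum_j p_j^0e^{-|\cdot-q_j^0|}$ and invoke the multipeakon asymptotics of \cite{Beals0}: there is a time $T_0$ and positions $z_1<\dots<z_N$, with $z_{i+1}-z_i$ as large as we please and speeds ordered by the $\lambda_i$, such that $U(T_0)$ is as close as we please in $H^1$ to $\sum_i\varphi_{\lambda_i}(\cdot-z_i)$. Next I would transfer this to $u$: combining the continuous dependence on the data of Proposition~\ref{WP} over the window $[0,T_0]$ with the uniform-in-time control coming from the stability of $\mathcal N$ (\cite{EL2}, Cor.~1.1), one obtains a modulus $\varepsilon(\cdot)$ with $\varepsilon(\alpha)\to0$ such that, for $\alpha$ small,
$$
\Bigl\|u(T_0)-\sum_{i=1}^N\varphi_{\lambda_i}(\cdot-z_i)\Bigr\|_{H^1}\le\varepsilon_0^2,\qquad z_{i+1}-z_i\ge L_0,
$$
where $\varepsilon_0,L_0$ are the thresholds of Theorem~\ref{asympt-mult-peaks} for the velocities $\lambda_1<\dots<\lambda_N$. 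Crucially, to force $|c_i^*-\lambda_i|\to0$ in the end one must let the separation $L$ \emph{grow} as $\alpha\to0$; this forces $T_0=T_0(\alpha)\to+\infty$, and it is precisely the time-uniform stability of $\mathcal N$ that lets the window $[0,T_0(\alpha)]$ be taken arbitrarily long while keeping the transfer error small for $\alpha$ small enough.

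\textbf{Conclusion via Theorem~\ref{asympt-mult-peaks}.} Then I would apply Theorem~\ref{asympt-mult-peaks} to $s\mapsto u(T_0+s)$, which emanates from $u(T_0)\in Y_+$ and satisfies its hypothesis \eqref{inini} with velocities $\lambda_1<\dots<\lambda_N$ and any fixed $\theta_0\in\,]0,\lambda_1/4[$ (which also satisfies $\theta_0<c_1^*/4$ since $c_1^*$ is close to $\lambda_1$). This yields $0<c_1^*<\dots<c_N^*$ and $C^1$-functions $x_i$ with $\dot x_i\to c_i^*$, the bound $|c_i^*-\lambda_i|\le A\sqrt{\sqrt{\varepsilon}+L^{-1/8}}=:\varepsilon(\alpha)$, the weak convergences \eqref{mul1}, and the strong convergence \eqref{mul2} in $H^1(]\theta_0 s,+\infty[)$. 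Since $T_0$ is fixed once $\alpha$ is, and $\theta_0<\lambda_1/4$, the interval $]\lambda_1 t/4,+\infty[$ is contained in $]\theta_0(t-T_0),+\infty[$ for $t$ large (after relabeling $s=t-T_0$), which gives \eqref{coromul2} and finishes the proof.

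\textbf{Main obstacle.} The delicate step is the transfer of Step~2. Proposition~\ref{WP} gives $H^1$-continuity of the flow only under \emph{tight} convergence of the momenta, whereas \eqref{ini3} provides merely $H^1$-closeness together with the mass bound $\|m_0\|_{\M}\le B$; these alone do not prevent a small (in $H^1$) but non-trivial amount of momentum from leaking to spatial infinity, so a bare continuity estimate is not available. This is exactly where the non-negativity of $m_0$ and the uniform bound $B$ are used, and why the argument must be routed through the $H^1$-stability of $\mathcal N$, which is established for $Y_+$ data with bounded mass and already absorbs such $H^1$-small perturbations. The second genuinely technical point is the diagonal balancing of $T_0(\alpha)\to+\infty$ against the admissible size of $\alpha$, needed to make both the train separation large and the residual error small so that the output velocities $c_i^*$ converge to the eigenvalues $\lambda_i$ as $\alpha\to0$.
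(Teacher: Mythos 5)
Your architecture is the one the paper intends (the paper itself gives only a one-sentence indication of proof: combine the stability of $\mathcal N$, the multipeakon asymptotics of \cite{Beals0}, and Theorem \ref{asympt-mult-peaks}): produce a well-ordered, well-separated train at a large time $T_0$ from the exact multipeakon solution $U$, transfer this to $u(T_0)$, and run Theorem \ref{asympt-mult-peaks} from there. Your handling of the endgame is also right: the diagonal balancing of $T_0(\alpha)\to+\infty$ against the admissible size of $\alpha$ (needed to force $|c_i^*-\lambda_i|\to 0$ via the quantitative bound $A\sqrt{\sqrt{\varepsilon}+L^{-1/8}}$ of Theorem \ref{mult-peaks}), and the interval containment $]\lambda_1 t/4,+\infty[\;\subset\;]\theta_0(t-T_0),+\infty[$ that converts \eqref{mul2} into \eqref{coromul2}.

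The gap is in your transfer step. You explicitly discard the flow continuity of Proposition \ref{WP} (on the grounds that tight convergence of the momenta is unavailable) and place the burden on the $H^1$-stability of $\mathcal N$ from \cite{EL2}. But that stability statement only controls the distance of $u(t)$ to the \emph{variety} $\mathcal N$: it says $u(T_0)$ is $H^1$-close to \emph{some} $\sum_j p_j e^{-|\cdot-q_j|}$, with no control on the amplitudes $p_j$ or the separations $q_j-q_{j-1}$, whereas hypothesis \eqref{inini} of Theorem \ref{asympt-mult-peaks} requires closeness to the \emph{specific} train $\sum_j\varphi_{\lambda_j}(\cdot-z_j)$ with $z_j-z_{j-1}\ge L_0$. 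So "stability of $\mathcal N$ absorbs the perturbation" cannot, as a black box, certify the hypothesis you need to verify. The tool that actually closes this step is available in the paper and you set it aside too quickly: part {\bf 3} of Proposition \ref{WP} requires only weak-$\ast$ convergence of the momenta against $C_0(\R)$ (not tight convergence, which is needed only for the conclusion \eqref{cont2}), and this hypothesis \emph{is} met here, because the momenta are bounded in $\M_+$ by $B$ while $H^1$-convergence of the data gives convergence of the momenta in $H^{-1}$, hence against $C_c^\infty$, hence against all of $C_0(\R)$ by density and the uniform mass bound. Part {\bf 3} then yields $u(T_0)\rightharpoonup U(T_0)$ in $H^1$ as $\alpha\to0$ (for $T_0$ fixed first), and the conservation law $E(u(T_0))=E(u_0)\to E(U(0))=E(U(T_0))$ upgrades weak convergence plus norm convergence to strong $H^1$ convergence. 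This is precisely the "continuity with respect to initial data in $H^1$" the paper invokes just before the corollary; with this substitution in your Step 2, your proof is complete, and the stability of $\mathcal N$ is in fact not needed as a separate ingredient.
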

 \noindent 
 {\bf Proof of Theorem \ref{asympt-mult-peaks}.}
 We first concentrate ourself on the fastest bump located around $ x_N(\cdot) $. To adapt Lemma \ref{almostdecay} to this bump, 
 we define $ I^{N,x_0}_{t_0} (\cdot) $ as $ I^{x_0}_{t_0} (\cdot) $ in Lemma \ref{almostdecay} but with $ z_{t_0}^{x_0}(\cdot) $ replaced by 
$$ z^{N,x_0}_{t_0} =x_N(t_0)+x_0+z(t)-z(t_0) \; ,
 $$
 where $ z \, :\, \R\to \R $ is a $ C^1$-function that satisfies 
 \begin{equation}\label{za}
 (1-\alpha) \dot{x}_N(t) \le \dot{z}(t) \le (1-\beta) \dot{x}_N(t) 
 \end{equation}
 for some $ 0	<\beta\le \alpha	<1 $. 
 We start by noticing that to prove \eqref{mono} we can replace \eqref{loc} by the less restrictive condition :
$$
 \|u(t)\|_{L^\infty(x-x(t)>R_0 )} \le \frac{ (1-\alpha) c_0}{2^6} \; .
$$
Indeed, it is direct to check that  the condition $ x-x(t)\le R_0 $  is sufficient to get  \eqref{to1}.  Therefore, to get  the same estimate as 
 \eqref{mono} for $ I^{N,R}_{t_0}(\cdot) $ it suffices to assume that  there exists $ R_0>0 $ and $ 0<\alpha<1 $ such that 
 \begin{equation}\label{locbis1}
 \|u(t)\|_{L^\infty(x-x_N(t)>R_0 )} \le \frac{ (1-\alpha) c_N}{2^6}\, ,  \; \forall t\in\R .
 \end{equation}
Now, because of the presence of the bumps at the left of the Nth bump, we are able to  establish the same estimate as \eqref{mono2} only for $ t_0 \ge t_R $ where $ t_R $ depends on $ R$. To prove such version of \eqref{mono2} with $ \alpha= \frac{5}{8} \frac{c_N-c_{N-1}}{c_N} $ we replace \eqref{loc} by 
\begin{equation}\label{con6}
\|u(t)\|_{L^\infty (]\frac{5x_{N-1}(t)+x_N(t)}{6}, x_N(t)-R_0[)} <\frac{(1-\alpha)c_N}{2^6} , \quad \forall t \ge t_R ,
\end{equation} 
with $ x_N(t)-x_{N-1}(t)\ge 2 R $. For any   $ R\ge R_0 $ we define 
\begin{equation}\label{deftR}
t_R=0\vee \{t\ge 0, \; x_N(t)-x_{N-1}(t)=2R\}   \; .
\end{equation}
For $ R\ge R_0$, assuming that $z(\cdot) $ satisfies \eqref{za} and that 
\begin{equation}\label{fin2}
|c_N-\dot{x}_N(t)|+|c_{N-1}-\dot{x}_{N-1}(t)| \le \frac{1}{12} (c_N-c_{N-1}) \, ,\; \forall t \ge 0 , 
\end{equation}
  we get for $ x\le \frac{5x_{N-1}(t)+x_N(t)}{6} $ and $ t_0\ge t_R $, 
\begin{align*}
x-z^{N,-R}_{t_0}&=x-x_N(t)+R +(x(t)-z(t))-(x(t_0)-z(t_0))\\
& \le -\frac{5}{6} (x_N(t)-x_{N-1}(t))+R +\alpha c_N (t-t_0) \\
& \le  -\frac{5}{3} R -\frac{3}{4} (c_N-c_{N-1})(t-t_0) +R +\frac{5}{8} (c_N-c_{N-1})(t-t_0)\\
& \le -\frac{2}{3} R -\frac{1}{8} (c_N-c_{N-1})(t-t_0)\, , 
\end{align*}
where we took $  \alpha= \frac{5}{8} \frac{c_N-c_{N-1}}{c_N}$. 
This leads to 
$$
\Psi(x-z^{N,-R}_{t_0}) \lesssim e^{-\frac{R}{9}} e^{-\frac{1}{48}(c_N-c_{N-1})(t-t_0)}\; .
$$
which is sufficient to get \eqref{J11} with $ \beta c_0 $ replaced by $\frac{c_N-c_{N-1}}{48}$.

In the sequel, we  set 
\begin{equation}\label{defL}
\varepsilon_0=\Bigl( \frac{\sigma_0}{2^{18}}\Bigr)^8  \mbox{ and } L_0=\Bigl( \frac{\sigma_0}{2^{18}}\Bigr)^{16}
\quad \text{ where } \sigma_0=A\,  \Bigl( \min_{i=2,..,N} (c_i-c_{i-1})\wedge \theta\Bigr) \, .
\end{equation}
Taking $ \alpha=\frac{5}{8} \frac{c_N-c_{N-1}}{c_N} $, we infer from \eqref{ini2} that for $ R_0 $ such that 
\begin{equation}\label{fin3}
N C_N e^{-R_0} <\frac{\sigma_0}{2^{18}}
\end{equation}
\eqref{locbis1} is satisfied . Moreover, for $R  \ge R_0 $, \eqref{difdif} ensures \eqref{fin2} is satisfied and  that \eqref{con6} is satisfied with $ t_R$ defined as in \eqref{deftR}.
 Therefore, taking  $ z(\cdot)=(1-\alpha) x_N(\cdot) $  and $ \beta=\alpha= \frac{5(c_N-c_{N-1})}{ 8 c_N}$, we infer that for any $ R>R_0 $ 
  \begin{equation}\label{mono3}
 I_{t_0}^{N,+R}(t_0) - I_{t_0}^{N,+R}(t) \le K_0 e^{-R/6} ,\quad  \forall 0\le  t\le t_0
\end{equation}
 and 
  \begin{equation}\label{mono4}
 I_{t_0}^{N,-R}(t) -  I_{t_0}^{N,-R}(t_0) \le K_0 e^{-\frac{R}{24}} , \quad  \forall t\ge t_0\ge t_R
\end{equation}
where $ t_R $ is defined as in \eqref{deftR} and where $ K_0=K_0(\sigma_0) $. 
  As in Section \ref{5}, this ensures that
 $$
  J_{\gamma,r}^R \Bigl(u(t_0,\cdot+x_N(t_0))\Bigr)\le J_{\gamma,r}^R\Bigl(u(t,\cdot+x_N(t))\Bigr)+K_0 e^{-R/6} \;  , \quad \forall 0\le t\le t_0,
 $$
 and 
 $$
   J_{\gamma,l}^R \Bigl(u(t_0,\cdot+x_N(t_0))\Bigr)\ge J_{\gamma,l}^R\Bigl(u(t,\cdot+x_N(t))\Bigr)-K_0 e^{-\frac{R}{24}}\;  , \quad \forall t \ge t_0\ge t_R\; .
 $$
 Since, we only need these last  two estimates in the proof of Proposition \ref{propasym} as well as in Subsections  \ref{51}-\ref{53}, we infer  that there exists $ c_N^* $ close to $ c_N $ such that $ \dot{x}_N \to c_N^*$ as $ t\to +\infty $ and 
 $$
  u(t,\cdot+x_N(t))\weaktendsto{t\to +\infty} c_N^* \varphi \mbox{   in } H^1(\R) \; .
 $$
 Moreover,
  \begin{equation}\label{cvcvN}
 u(t,\cdot+x_N(t))-c_N^* \varphi  \tendsto{t\to +\infty}  0\mbox{   in } H^1(]-A,+\infty[)  \mbox{ for any } A>0 \; .
 \end{equation}
Now, setting
   $y_N=\frac{x_N+x_{N-1}}{2} $ and noticing that for all $ t\ge 0 $, $ z(\cdot)=y_N(\cdot)   $ also satisfies \eqref{condz} with  
   $ \alpha=  \frac{5(c_N-c_{N-1})}{ 8 c_N}$ and $ \beta = \frac{(c_N-c_{N-1})}{ 8 c_N}$, we get that for $ R\ge R_0 $, 
  \begin{equation}\label{yN}
    \int_{\R} (u^2+u_x^2)(t,\cdot) \Psi(\cdot-y_N(t))=I^{N,y_N(t_R)-x_N(t_R)}_{t_R} (t)
  \end{equation}
   is also almost non increasing for $ t \ge t_{R}$ where $ t_{R} $ is defined in \eqref{deftR}. Indeed, we have $ x_N(t_R)-y_{N}(t_R)\ge R \ge R_0$.
   
   This enables, as in Subsection \ref{53}, to prove that actually
   $$
    \int_{\R} \Big[\Bigl(u(t)- c_N^* \varphi(\cdot-x_N(t))\Bigr)^2+\Bigl(u_x- c_N^*\varphi_x (\cdot-x_N(t))\Bigr)^2 \Bigr]\Psi(\cdot-y_N(t)) \tendsto{t\to \infty}  0\; .    $$
    Let us  now set  $ y_i=(x_{i}+x_{i-1})/2 $ for $ i=2,..,N-1 $ and $y_1(t)=\theta t $. We claim that if  for some $ 1\le i\le N-1 $ it holds
  \begin{equation}\label{hyp}
  \int_{\R} \Big[\Bigl(u(t)-\sum_{j=i+1}^N c_j^* \varphi(\cdot-x_N(t))\Bigr)^2+\Bigl(u_x-\sum_{j=i+1}^N c_j^* \varphi_x (\cdot-x_N(t))\Bigr)^2 \Bigr]\Psi(\cdot-y_{i+1}(t)) \tendsto{t\to \infty}  0 
  \end{equation}
  then  $ \dot{x}_i(t)\to c_i^* $ as $ t\to \infty $ for some $ c_i^* $ close to $c_i $ and 
  \begin{equation}\label{hyp2}
   \int_{\R} \Big[\Bigl(u(t)-\sum_{j=i}^N c_j^* \varphi(\cdot-x_N(t))\Bigr)^2+\Bigl(u_x-\sum_{j=i}^N c_j^* \varphi_x (\cdot-x_N(t))\Bigr)^2 \Bigr]\Psi(\cdot-y_i(t)) \tendsto{t\to \infty}  0
  \end{equation}
  which clearly yields the desired result by a finite iterative argument.

 We start by  noticing that for $ i\in \{2,..,N-1\}$, \eqref{con6} also holds for $ u $ with $ \alpha= \frac{5(c_i-c_{i-1})}{ 8 c_i} $ and with 
   $x_N, x_{N-1} $, $ c_N$, $ c_{N-1}$ and $ t_R$ replaced by  respectively $ x_i $, $ x_{i-1} $,  $ c_i $ and $ c_{i-1}$ and 
    \begin{equation}\label{deftRi}
t_R^i =\max\Bigl(\{0\}\cup \{t\ge 0, \; x_i(t)-x_{i-1}(t)=2R\}  \Bigr) \; .
\end{equation}
  Moreover, for $ i=1 $,  \eqref{defL} and \eqref{ini2} ensure that 
  \begin{equation}\label{i=1}
  \|u(t)\|_{L^\infty(]-\infty, x_1(t)-R_0[} < \frac{1-\alpha}{2^6} c_1
  \end{equation}
  with $ 1-\alpha=\frac{\theta}{4c_1} $.
  Therefore, defining $ I^{i,x_0}_{t_0} $ in the same way as $ I^{x_0}_{t_0} $ but with $ x(\cdot) $ replaced by $x_i(\cdot) $ and taking  $ z(\cdot)=(1-\alpha) x_i(\cdot) $  with $ \alpha=\beta= \frac{5(c_i-c_{i-1})}{ 8 c_i}$ if $ i\ge 2$ and  $z(t)=\frac{\theta}{2} t $, $
   \alpha=1-\frac{\theta}{4c_1} $, $\beta =1/4$ if $ i=1$, we get that for any $ R\ge R_0 $ it holds
 \begin{equation}\label{mono5}
 I_{t_0}^{i,-R}(t) - I_{t_0}^{i,-R}(t_0) \le K_0  e^{-\frac{R}{24}}  ,\quad \forall i\in\{1,..,N-1\},\; \forall  t\ge t_0\ge t_R^i \; ,
\end{equation}
 where $ K_0=K_0(\sigma_0) $.
As in Section \ref{5}, it follows that for $ \gamma\ge 0 $ small enough,
 \begin{equation}\label{monoJl1}
J_{\gamma,l}^{R} \Bigl(u(t,\cdot+x_i(t))\Bigr)\ge J_{\gamma,l}^{R}\Bigl(u(t_0,\cdot+x_i(t_0))\Bigr)-K_0 e^{-\frac{R}{24}}   , \; \forall t\ge t_0\ge t_R^{i}\; .
 \end{equation}
 Now, the proof of the almost monotonicity of  $J_{\gamma,r}^R \Bigl(u(t,\cdot+x_i(t))\Bigr) $ is more subtle. Indeed, starting at $ x_i(t_0)+R $ 
  at time $ t_0 $ for some $ R>0 $ and traveling back in time with a fixed speed strictly less than $ c_i $, one will cross $ x_{i+1}(\cdot) $ at some time $ t$ which will tend to $ +\infty $ if $ t_0 $ tends to $ +\infty $. This is clearly not allowed if we want to prove an almost monotonicity result. 
  To  overcome this difficulty we will decompose the travel back into  two parts. First, one travels back with some speed strictly less than $ c_i $ till one crosses the   curve of the middle point   $ y_{i+1}(\cdot) =(x_i(\cdot ) +x_{i+1}(\cdot))/2 $. Then, one continues to travel back but along $ y_{i+1}(\cdot) $ until  the time $ t^{i+1}_R $ that satisfies
    $ x_{i+1}(t^{i+1}_R)-x_{i}(t^{i+1}_R)\ge 2 R $. This is the idea of the proof of the 
    following lemma which ensures that  $J_{\gamma,r}^R \Bigl(u(t,\cdot+x_i(t))\Bigr) $ is  almost non increasing for $ t\ge t_R^{i+1} $. 
\begin{lemma} \label{yvan}
Let $ i\in\{1,.., N-1\}$ and $ 0\le \gamma \le c_i $. For any $ R>0 $ it holds
\begin{equation}\label{monoJr1}
J_{\gamma,r}^R \Bigl(u(t_0,\cdot+x_i(t_0))\Bigr)\le J_{\gamma,r}^R\Bigl(u(t,\cdot+x_i(t))\Bigr)+K_0 e^{-\frac{R}{24}}   , \; \forall t_0\ge t\ge t_R^{i+1}\; .
 \end{equation}  
 where $ t^i_R $ is defined as in \eqref{deftRi}.
\end{lemma}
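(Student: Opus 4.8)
The plan is to prove the almost monotonicity of $t\mapsto J_{\gamma,r}^{R}\bigl(u(t,\cdot+x_i(t))\bigr)$ by transporting the weight $\Psi$ backward in time along a \emph{broken} center curve, as suggested in the discussion preceding the statement. Two monotonicity mechanisms are combined: the straight-center estimate \eqref{mono} (valid whenever the center travels at a speed between $(1-\alpha)\dot x_i$ and $(1-\beta)\dot x_i$ through a region where $\|u\|_{L^\infty}$ is small), and the monotonicity of the energy-momentum lying to the right of the midpoint curve $y_{i+1}=(x_i+x_{i+1})/2$. The constraint $0\le\gamma\le c_i$ is exactly what lets the momentum part of the functional be absorbed by the energy part, precisely as in the proof of Lemma \ref{almostdecay}.

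First I would record the midpoint monotonicity. Because bump $i+1$ is faster, $y_{i+1}$ moves at speed $\approx (c_i+c_{i+1})/2<c_{i+1}$, so relative to the \emph{reference bump $i+1$} it is an admissible center in the sense of \eqref{condz}, in complete analogy with the treatment of $y_N$ in \eqref{yN}; the required smallness of $\|u\|_{L^\infty}$ near $y_{i+1}$ holds in the inter-bump gap thanks to the orbital-stability bound \eqref{ini2} together with the separation $x_{i+1}(t)-x_i(t)\ge 2R$ available for all $t\ge t_R^{i+1}$ by \eqref{deftRi}. Running the argument of Lemma \ref{almostdecay} with center $y_{i+1}$ then shows that $\tau\mapsto\langle u^2+u_x^2+\gamma y,\Psi(\cdot-y_{i+1}(\tau))\rangle$ is almost non increasing on $[t_R^{i+1},+\infty[$, with an error $\lesssim e^{-R/24}$ stemming from the same transversality estimate $\Psi(x-z)\lesssim e^{-R/9}$ used in \eqref{con6}.

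Next I would introduce the broken center $\zeta$: set $\zeta(t_0)=x_i(t_0)+R$ and $\dot\zeta=(1-\alpha)\dot x_i$ for a small fixed $\alpha$, so that $\zeta(\tau)-x_i(\tau)=R+\alpha\bigl(x_i(t_0)-x_i(\tau)\bigr)\ge R$ for $\tau\le t_0$, and let $t^*\in[t_R^{i+1},t_0]$ be the backward time at which $\zeta$ first meets $y_{i+1}$. Such a crossing exists since $\zeta(t_0)\le y_{i+1}(t_0)$ while $\zeta(t_R^{i+1})\ge x_i(t_R^{i+1})+R=y_{i+1}(t_R^{i+1})$. On $[t^*,t_0]$ the center stays in the gap below $y_{i+1}$, where $\|u\|_{L^\infty}$ is small, so the straight-center version of \eqref{mono} gives, with $I_\zeta(\tau):=\langle u^2+u_x^2+\gamma y,\Psi(\cdot-\zeta(\tau))\rangle$, the bound $I_\zeta(t_0)-I_\zeta(t^*)\le K_0 e^{-R/6}$; the only contribution of bump $i+1$ enters through the exponential tail of $\Psi'$ and is itself $\lesssim e^{-R/6}$ since $\zeta\le y_{i+1}$ lies at distance $\ge R$ from $x_{i+1}$. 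If the comparison time satisfies $t\ge t^*$ the first leg alone suffices: using $I_\zeta(t_0)=J_{\gamma,r}^R(u(t_0,\cdot+x_i(t_0)))$, the monotonicity on $[t,t_0]$, and $\Psi(\cdot-\zeta(t))\le\Psi(\cdot-x_i(t)-R)$ (valid because $\zeta(t)\ge x_i(t)+R$), one gets $J_{\gamma,r}^R(u(t_0))\le J_{\gamma,r}^R(u(t))+K_0 e^{-R/6}$. If instead $t<t^*$, I would chain: $\zeta(t^*)=y_{i+1}(t^*)$ identifies $I_\zeta(t^*)$ with the midpoint functional, the midpoint monotonicity transports it from $t^*$ down to $t$ at cost $\lesssim e^{-R/24}$, and finally $\Psi(\cdot-y_{i+1}(t))\le\Psi(\cdot-x_i(t)-R)$, which holds because $y_{i+1}(t)\ge x_i(t)+R$ for $t\ge t_R^{i+1}$, yields $J_{\gamma,r}^R(u(t_0))\le J_{\gamma,r}^R(u(t))+K_0 e^{-R/24}$.

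The main obstacle is the bookkeeping at the switch, namely checking that along \emph{both} legs the center remains in a region where $\|u\|_{L^\infty}\le (1-\alpha)c_i/2^6$, so that the localization hypotheses of Lemma \ref{almostdecay} are genuinely met; this is exactly where the separation threshold encoded in $t_R^{i+1}$ via \eqref{deftRi} and the orbital-stability estimate \eqref{ini2} are indispensable, since they keep the inter-bump gap empty of mass and force the midpoint to stay at distance $\ge R$ from $x_i$ for all $t\ge t_R^{i+1}$. The degraded exponent $e^{-R/24}$, rather than the $e^{-R/6}$ of the single-bump case, is the unavoidable price for routing the center through the midpoint, and it is inherited from the $e^{-R/9}$ decay appearing on the midpoint leg.
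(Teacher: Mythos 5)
Your construction is essentially the paper's own proof: a broken center issued from $x_i(t_0)+R$, run backward at a speed slightly below $\dot x_i$ until it meets the midpoint curve $y_{i+1}=(x_i+x_{i+1})/2$, then continued along $y_{i+1}$ down to $t_R^{i+1}$, with the conclusion obtained from the comparison $y_{i+1}(t)\ge x_i(t)+R$. The two-leg decomposition, the role of \eqref{deftRi}, the midpoint monotonicity in the spirit of \eqref{yN}, and the final chaining are exactly those of the paper.

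There is, however, a genuine gap in your first-leg estimate. You bound the contribution of bump $i+1$ by $K_0e^{-R/6}$ on the sole ground that the center $\zeta$ stays at distance $\ge R$ from $x_{i+1}$. But the monotonicity bound comes from integrating a differential inequality over $[t^*,t_0]$, an interval whose length is unbounded as $t_0\to+\infty$; a static bound of size $e^{-R/6}$ per unit time only yields $(t_0-t^*)\,e^{-R/6}$, which is useless. In Lemma \ref{almostdecay} the time integration is saved by the factor $e^{-\frac{\beta}{6}c_0(t_0-t)}$ in \eqref{J11}, which expresses that the bump recedes from the center backward in time; this mechanism is \emph{not} available for bump $i+1$, which travels faster than $\zeta$ and therefore approaches it backward in time. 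What saves the argument is instead the linear growth of the inter-bump separation: by \eqref{difdif}, $x_{i+1}(t)-x_i(t)\ge 2R+c'(t-t_R^{i+1})$ for $t\ge t_R^{i+1}$ with $c'$ comparable to $c_{i+1}-c_i$, so that the distance from $\zeta(t)\le y_{i+1}(t)$ to the region where $u$ is of size $c_{i+1}$ grows linearly in $t-t_R^{i+1}$, and this supplies the exponential time-decay factor that makes the error integrable. This is precisely what the paper makes explicit through the inequality
\[
x-z^{i,R}_{t_0}(t)\ \ge\ x-y_{i+1}(t)\ \ge\ \frac{R}{4}+\frac{1}{2^4}(c_{i+1}-c_i)\bigl(t-t_R^{i+1}\bigr)
\qquad \text{for } x\ \ge\ \tfrac{5x_{i+1}(t)+3x_i(t)}{8},
\]
which is also why its constant is the weaker $e^{-R/24}$ (the price of measuring the distance from a point strictly to the right of the midpoint) rather than your claimed $e^{-R/6}$. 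Once this time-decay factor is inserted into your first leg, your argument coincides with the paper's proof.
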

\begin{proof} Let $ R>0 $ and $ t_0>t^i_R $. 
We set 
$$
t_0'=t_R^{i+1} \vee  \{t\in [t_R^{i+1},t_0] , \, x_i(t_0)+R +\frac{3}{4} (x_i(t)-x_i(t_0))=y_{i+1}(t)\} 
$$
On $ [t_0',t_0] $ it holds 
$$
x_i(t_0)+R +\frac{3}{4} (x_i(t)-x_i(t_0)) \le y_{i+1}(t) 
$$
and thus seting $ z^{i,R}_{t_0}(t)=x_i(t_0)+R +\frac{3}{4}(x_i(t)-x_i(t_0)) $,  we get for any $ t\ge t_R^{i+1} $ and 
 any $ x\ge \frac{5x_{i+1}(t)+3x_{i}(t)}{8}  $ that 
 $$
 x-z^{i,R}_{t_0}(t) \ge x-y_{i+1}(t) \ge \frac{R}{4} +\frac{1}{2^4} (c_{i+1}-c_i) (t-t_R^{i+1}) 
 $$
 which leads to 
 $$
 \Psi(x-z^{i,R}_{t_0}(t) ) \le e^{-\frac{R}{24}} e^{-\frac{1}{2^7} ( c_{i+1}-c_i) (t-t_R^i) }\; .
 $$
 On the other, \eqref{defL} and \eqref{ini2} ensure that for $ R\ge R_0 $ (with $ R_0 $ defined in \eqref{fin3}), 
\begin{equation}\label{con66}
\|u(t)\|_{L^\infty (]x_i(t)+R,  \frac{5x_{i+1}(t)+3x_{i}(t)}{8}[} <\frac{\theta}{2^8}< \frac{1}{2} \frac{c_i}{2^6} , \quad \forall t \ge t_R^{i+1} ,
\end{equation}  
 Therefore, defining 
 $$
 I_{t_0}^{i,R}(t)=\dist{u^2(t)+u_x^2(t)+\gamma  y(t)}{ \Psi(\cdot -z^{i,R}_{t_0}(t))},
  $$
  with $ 0\le \gamma \le c_i $, 
 we get as in \eqref{mono} that for all $ t \in [t_0',t_0] $,
 $$
 I_{t_0}^{i,R}(t_0)-I^{i,R}_{t_0}(t) \le K_0 e^{-\frac{R}{24}} \; .
 $$
 It follows as in \eqref{monoJr} that 
 $$
 J^R_{\gamma,r}(u(t,\cdot+x_i(t_0))\le J^R_{\gamma,r}(u(t,\cdot+x_i(t))+K_0 e^{-\frac{R}{24}}  , \quad \forall t\in [t_0',t_0] 
 $$
 If $ t_0'=t_R^{i+1} $ we are done. Otherwise we must have $ z^{i,R}_{t_0'} =y_{i+1}(t_0') $. But then the same arguments as in \eqref{yN} lead, for $ 0\le \gamma \le c_i $,  to 
 $$
\dist{u^2(t)+u_x^2(t)+\gamma  y(t)}{ \Psi(\cdot -y_{i+1}(t_0'))} \le
\dist{u^2(t)+u_x^2(t)+\gamma  y(t)}{ \Psi(\cdot -y_{i+1}(t))} +K_0 e^{-R/9} , \; \forall t\in [t_R^{i+1}, t_0']\; .
 $$
 Since for $ t\ge t_R^{i+1} $,
 $$
\dist{u^2(t)+u_x^2(t)+\gamma  y(t)}{ \Psi(\cdot -y_{i+1}(t))}\le J_{\gamma,r}^R (u(t,\cdot+x_i(t)) 
  $$
  we obtain the desired result for all $ t\in [t_R^{i+1},t_0] $. 
 \end{proof}

 Since the proof of Proposition \ref{propasym} only uses the almost monotonicity of $J_{\gamma,r}^R(u(t,\cdot+x(t))) $ and  $J_{\gamma,l}^R(u(t,\cdot+x(t))) $ for 
   some 
  $ \gamma>0 $ and for $ t\ge t_R $ where $ t_R $ is a non negative time depending on $ R $, we obtain as in \eqref{pp3} that 
  \begin{equation}\label{ddd}
  u(t,\cdot+x_i(t))-\lambda_i(t) \varphi \tendsto{t\to +\infty} 0 \mbox{   in } H^1_{loc}(\R) \; .
 \end{equation}
 where
 $$
  \lambda_i(t)=\max_{x\in [-c_i, c_i]} u(t,x+x_i(t)) \, .
  $$
  Let us now set 
$$
 W_{>i}(t)=\sum_{j=i+1}^N c_j^* \varphi(\cdot -x_j(t))\mbox{ and }  v=u-W_{>i}(t) \, ,
 $$
 In view of \eqref{difdif}, for any  $\varepsilon>0 $, there exists $ t_\varepsilon>0 $  such that for all  $ R\ge 0 $ and $ t\ge t_\varepsilon $, 
 $$
 \Bigl|   J_{0,l}^R \Bigl(u(t, \cdot +x_i(t))-   J_{0,l}^R \Bigl(v(t, \cdot +x_i(t)) \Bigr| \le \varepsilon\; .
 $$
 Moreover, decomposing $  J_{0,r}^R \Bigl(u(t,\cdot+x_i(t))\Bigr) $ as 
 \begin{align*}
  J_{0,r}^R \Bigl(u(t,\cdot+x_i(t))\Bigr)& =\int_{\R} (u^2+u_x^2)\Psi(\cdot -x_i(t)-R) \Bigl[ 1-\Psi\Bigl(\cdot-y_{i+1}(t)\Bigr)\Bigr]\\
 &  +\int_{\R}(u^2+u_x^2) \Psi(\cdot -x_{i+1}(t)-R)\Psi\Bigl(\cdot-y_{i+1}(t)\Bigr)\\
 & =A_1(t)+A_2(t)\; ,
  \end{align*}
   \eqref{difdif}  ensures that 
   $$
   A_1(t)-\int_{\R} (v^2+v_x^2)\Psi(\cdot -x_i(t)-R) \Bigl[ 1-\Psi\Bigl(\cdot-y_{i+1}(t)\Bigr)\Bigr]\tendsto{t\to \infty} 0 
   $$
   and \eqref{hyp} together with  \eqref{difdif} ensures that 
     $$
   A_2(t)- \int_{\R}(v^2+v_x^2) \Psi(\cdot -x_{i}(t)-R)\Psi\Bigl(\cdot-y_{i+1}(t)\Bigr)\tendsto{t\to \infty} \sum_{i+1}^N E(\varphi_{c_i^*}) \; . 
   $$
   This proves that $J_{0,r}^R(v(t,\cdot+x_i(t))) $ and  $J_{0,l}^R(v(t,\cdot+x_i(t))) $ enjoy the same almost monotonicity property as respectively 
   $J_{0,r}^R(v(t,\cdot+x_i(t))) $ and  $J_{0,l}^R(v(t,\cdot+x_i(t))) $ for $t\ge t_R $ large enough. 
   
     Let now $ \delta>0 $ be fixed. According to   \eqref{hyp} , there exists $ t_\delta>0 $ such that 
  $$
  \int_{\R} (v^2+v_x^2)(t_\delta,x)\Psi(x-y_{i+1}(t_\delta))\, dx <\delta/3 \; .
  $$
  Moreover, by \eqref{difdif}, we may  also require
   $$
  \int_{\R} (\varphi^2+\varphi_x^2) (x-x_i(t)) \Psi(x-y_{i+1}(t_\delta))\, dx <\delta/3 \; .
  $$
Therefore using the  almost monotonicity of  $J_{0,r}^R(v(t,\cdot+x_i(t))) $, with $ R=y_{i+1}(t_\delta)-x_i(t_\delta) $, together with the  local strong convergence result  \eqref{ddd} and \eqref{difdif}, we get that for all fixed $ A>0 $, 
\begin{equation}\label{oi}
u(t,\cdot+x_i(t))-\lambda_i(t) \varphi- W_{>i}(t,\cdot +x_i(t))\tendsto{t\to +\infty} 0 \mbox{ in } H^1(]-A,+\infty[) \; .
\end{equation}
To prove the convergence of the scaling parameter $ \lambda_i$ we use   the above  strong convergence in  $ H^1(]-A,+\infty[) $ and \eqref{difdif}  to get that for any $\delta>0 $ there exists $R_{\delta}>0 $ and $t_{\delta}>0 $ such that  
  $$
    \Bigl| \int_{\R} (v^2+v_x^2)(t,x) \Psi(x-x_i(t)+R_\delta) \, dx- \lambda_i^2(t)E(\varphi) \Bigr| \le \delta \, , \quad \forall t\ge t_{\delta} \, .
   $$
   The almost monotonicity of $J_{0,l}^{R_\delta}(v(t,\cdot+x_i(t))) $ then ensures that $ \lambda_i(t) \to c_i^* $ as $ t\to +\infty$, for some $ c_i^* $ close to $ c_i $. Hence, we get 
     \begin{equation}\label{oi2}
u(t,\cdot+x_i(t))-c_i^* \varphi- W_{>i}(t,\cdot +x_i(t))\tendsto{t\to +\infty} 0 \mbox{ in } H^1(]-A,+\infty[) \; .
\end{equation}
    To get the convergence of $ \dot{x}_i $ towards $ c_i^* $ we write the equation for 
    $$
     \eta=u-\sum_{j=i}^N c_j^* \varphi(\cdot-x_j(t)) =u-W_{\ge i}=v-c_i^* \varphi(\cdot -x_i(t)) 
     $$
      and proceed as in Subsection \ref{53}.
    
    Finally to prove \eqref{hyp2}  we first notice that,  proceeding as in Subsection \ref{54}, we obtain that for any fixed $ \delta>0 $, there exists $ R>0 $ and $t_0\ge 0 $ such that
   $$
   \Bigl| \sum_{j\ge i}^N E(\varphi_{c_{j^*}})-\int_{\R} \Bigl( W_{\ge i}  +(\partial_x  W_{\ge i})^2 \Bigr) \Psi(\cdot-x_i(t)+y) \Bigr| < \delta, \quad \forall y\ge R,\, \forall t\ge t_0 \, ,
 $$
 $$
 \|(\eta^2+\eta_x^2)(t,\cdot+x_i(t)) \|_{H^1(]-2R,+\infty[)} < \delta \, ,\quad  \forall t\ge t_0 \; ,
 $$
 and 
 $$
\Bigl|\sum_{j\ge i}^N E(\varphi_{c_{j^*}})-\int_{\R} \Bigl( u(t,\cdot+x_i(t)) W_{\ge i} +u_x(,\cdot+x_i(t))\partial_x W_{\ge i}  \Bigr) \Psi(\cdot+y) \Bigr| < \delta, \quad \forall y\ge R,\, \forall t\ge t_0 \; .
 $$
 Therefore, using the almost monotonicity of $ t\mapsto I^{i,-R}_{t_0} (t)$ with $ \gamma=0 $,  $z(t)=\frac{1}{2}(x_{i-1}(t)+y_i(t)) $,   $
 (1-\alpha) =\frac{c_{i}+7c_{i-1}}{8c_i} $ if $ i\ge 2$ and $z(t)=\frac{\theta}{2}t $, $\alpha=1-\frac{\theta}{4c_1} $ and  $ \beta =1/4 $ if $ i=1$, and proceeding as in Subsection \eqref{54}, we get that for $ t\ge t_0 $,
     \begin{align*}
 \int_{\R} (\eta^2+\eta_x^2)(t,\cdot) & \Psi(\cdot-z^{i,-R}_{t_0}(t))=
 \int_{\R} (u^2+u_x^2)(t,\cdot) \Psi(\cdot-z^{i,-R}_{t_0}(t))  \\
 &-2 c_i^*
 \int_{\R} \Bigl(u(t) W_{\ge i} (t) +u_x(t) \partial_x W_{\ge i} (t)\Bigr) \Psi(\cdot-z^{i,-R}_{t_0}(t))\\
 &+ (c_i^*)^2 \int_{\R} (W_{\ge i}^2+(\partial_x W_{\ge i})^2)(t)  \Psi (\cdot-z^{i,-R}_{t_0}(t)) \\
 & \le  \int_{\R} (v^2+v_x^2)(t_0,\cdot) \Psi(\cdot-x_i(t_0)+R)+K_0(\sigma_0)  e^{-R/6}\\
 &-2 c_i^*
 \int_{\R} \Bigl(u(t_0) W_{\ge i}(t_0) +u_x(t_0) \partial_x W_{\ge i}(t_0) \Bigr) \Psi(\cdot-x_i(t_0)+R )\\
 &+ (c_i^*)^2 \int_{\R} (W_{\ge i}^2+(\partial_x W_{\ge i})^2)(t_0)  \Psi(\cdot-x_i(t_0)+R) +2\delta\\
 & \le  \int_{\R}(\eta^2+\eta_x^2)(t,\cdot)  \Psi(\cdot
 -x_i(t_0)+R)+K_0(\sigma_0)e^{-R/6}+2\delta\\
& \lesssim \delta +e^{-R/6}\, ,
 \end{align*} 
 where  $ z^{i,-R}_{t_0}(t)=x_i(t_0)-R+z(t)-z(t_0)\le x_i(t)-R $, $ \forall t\ge t_0$. This yields the result since, $ R $ being fixed, it holds  $z^{i,-R}_{t_0}(t)\le y_i(t) $ for $t $ large enough.
    \section{Appendix}
    \subsection{Proof of \eqref{go}}\label{sect71}
   Using that 
  \begin{equation}
    \frac{d}{dt} \int_{\R} u^2+u_x^2 = 2 \int_{\R} u u_t g +2 \int_{\R} u_x u_{xt} g \; , \label{z0}
    \end{equation}
  \eqref{CH} yields
   \begin{align}
    2 \int_{\R} u u_t g & =  -2 \int_{\R} u^2 u_x g -2 \int_{\R} u \, p_x \ast (u^2+u_x^2/2) g \nonumber \\
     & =  \frac{2}{3} \int_{\R} u^3 g' -2 \int_{\R} u \, p_x \ast (u^2+u_x^2/2) g \label{z1}
    \end{align}
    and, recalling  that $p_{xx}=p-\delta_0 $, 
      \begin{align}
    2 \int_{\R} u_x u_{xt} g & =  -2 \int_{\R} u_x^3 g -2\int_{\R} u_x u u_{2x} g -2 \int_{\R} u_x  p_{xx}  \ast (u^2+u_x^2/2) g \nonumber \\
     & = -2 \int_{\R} u_x ^3 g +\int_{\R} u_x ^3 g+\int_{\R}  u u_x^2 g'\nonumber \\
     & \quad -2 \int_{\R} u_x p \ast (u^2+u_x^2/2) g+2\int_{\R} u_x (u^2+u_x^2/2) g \nonumber \\
     & =  \int_{\R}  u u_{x}^2 g'+2 \int_{\R} u\,  p \ast (u^2+u_x^2/2) g'+2 \int_{\R} u \, p_x  \ast (u^2+u_x^2/2) g-\frac{2}{3} \int_{\R} u^3 g'\; .
     \label{z2}
    \end{align}
    Gathering \eqref{z0}-\eqref{z2}, \eqref{go} follows.
     \subsection{Proof of  Lemma \ref{modulation}}

 Let $ n_0\in \N $  to be specified later.  For  $z\in\R $ we define the function 
   $$Y_z \; :\; \begin{array}{rcl}
  \R \times H^1(\R) & \longrightarrow & \R \\
   (y,v) & \mapsto & \displaystyle \int_{\R} v(x) (\rho_{n_0}\ast \varphi)'(x-z-y) \, dx 
   \end{array} \; .
   $$
 Since $ \rho_{n_0} $ and $ \varphi $ are both even, one has $ Y_z(0,\varphi(\cdot -z))=0 $.  Moreover,  $ Y $ is clearly of class $ C^1 $   and
 it holds 
 \begin{equation}\label{vb0}
  \frac{\partial Y_z}{\partial y} (0,\varphi(\cdot-z)) =  \int_{\R} \varphi' (\rho_{n_0} \ast \varphi')=\|\varphi'\|_{L^2}^2-\varepsilon(n_0)=1 -\varepsilon(n_0)\; ,
  \end{equation}
  with $ \varepsilon(n)\to 0 $ as $ n \to +\infty $. Therefore by taking $ n_0 $ large enough, we may require  that 
  $$
  \frac{\partial Y_z}{\partial y} (0,\varphi(\cdot-z)) \ge 1/2 \; .
  $$
  From the implicit function theorem we deduce that there exists $ \tilde{\varepsilon}_0>0 $, $\kappa_0>0 $  and a $ C^1 $-function $ y_z $ from 
  $ B_{H^1}(\varphi(\cdot-z),\tilde{\varepsilon}_0) $
  in $]\kappa_0,\kappa_0 [$  which is uniquely determined such that
   $$
   Y_z(y_z(u),u)=Y(0,\varphi)=0 , \quad \forall u\in B_{H^1}(\varphi(\cdot-z),\tilde{\varepsilon}_0)\; .
   $$
 In particular there exists $ C_0>0 $ such that if $ u\in B_{H^1}(\varphi(\cdot-z),\beta) $ with $ 0<\beta\le \tilde{\varepsilon}_0 $ then 
 \begin{equation}\label{rd}
 |y_z(u)|\le C_0 \beta\; .
 \end{equation}
 Note that, by a  translation symmetry argument, $\tilde{\varepsilon}_0 $, $\kappa_0 $ and $ C_0 $ are independent of $ z\in \R $. Therefore, by uniqueness, we can define a $ C^1 $-mapping $ \tilde{x} $ from 
 $ U_{z\in\R}  B_{H^1}(\varphi(\cdot-z),\tilde{\varepsilon}_0) $ into $]\kappa_0,\kappa_0 [$ by setting 
  $$
  \tilde{x}(u)=z+y_z(u) \; \text{ for   } u\in    B_{H^1}(\varphi(\cdot-z),\tilde{\varepsilon}_0)  \; .
  $$
 Now we notice that $ Y_z $ is also  a $ C^1$-function from $\R\times L^2(\R) $ into $ \R $ with
 $$
  \frac{\partial Y_z}{\partial y} (y,\varphi(\cdot-z)) =  \int_{\R} u(x)(\rho_{n_0}'' \ast \varphi)(x-z-y)\, dx \; . $$
  Therefore, in the same way as above we obtain that there exists $\tilde{\tilde{\varepsilon}}_0>0 $ and a   $ C^1 $-function $ \tilde{\tilde{x}} $ from  $ \cup_{z\in\R} B_{L^2}(\varphi(\cdot-z),\tilde{\tilde{\varepsilon}}_0) $ 
   into a neighborhood of $ 0 $ in $ \R $ such that 
  $$
  \int_{\R} u (\rho_{n_0}\ast \varphi)'(\cdot-y) =0 \Leftrightarrow y= \tilde{\tilde{x}}(u) , \quad \forall u \in \cup_{z\in\R} B_{L^2}(\varphi(\cdot-z),\tilde{\tilde{\varepsilon}}_0) \; .
  $$
 We set $ \varepsilon_0=\tilde{\varepsilon}_0\wedge \tilde{\tilde{\varepsilon}}_0 $. By uniqueness it holds $ \tilde{\tilde{x}}\equiv \tilde{x}$ on $ B_{H^1}(\varphi(\cdot-z),\varepsilon_0) $ and thus $ \tilde{x}$  is also a $ C^1 $-function on $ \cup_{z\in\R} B_{H^1}(\varphi(\cdot-z),\varepsilon_0) $ equipped with the metric inducted by the $ L^2(\R)$-metric. 
 
 Now, according to \eqref{gff}, it holds $ \{\frac{1}{c}u(t,), t\in \R\} \subset \cup_{z\in\R} B_{H^1}(\varphi(\cdot-z),\varepsilon_0)$  so that we can define  the function $ x(\cdot)$ on $\R$ by setting $ x(t)=\tilde{x}(u(t)) $. By construction $ x(\cdot) $ satisfies \eqref{distxz}-\eqref{ort}. Moreover, \eqref{gf} together with \eqref{rd} ensure that for any $ c>0 $ and any $
 0<\varepsilon<c \varepsilon_0 $, it holds 
 \begin{equation}\label{vb1}
 \|\frac{1}{c}u(t)-\varphi(\cdot-x(t))\|_{H^1} \le (\frac{\varepsilon}{c})^2 + \sup_{|z|\le C_0(\frac{\varepsilon}{c})^2} \|\varphi-\varphi(\cdot-z))\|_{H^1} \lesssim  (\frac{\varepsilon}{c})^2+\sqrt{C_0}\,  \frac{\varepsilon}{c}
 \end{equation}
which proves \eqref{fg}.

  In view of \eqref{CH}, any solution $ u\in C(\R;H^1(\R)) $ of (C-H) satisfies $ u_t\in C(\R;L^2(\R)) $ and thus belongs to $C^1(\R;L^2(\R)) $. This ensures that 
   the mapping $ t\mapsto x(t)=\tilde{x}(u(t)) $ is of class $ C^1$ on $ \R $.  Setting  $ R(t,\cdot)=c \varphi (\cdot-x(t))$ and $ w=u-R$ and differentiating \eqref{ort} with respect to time we get 
  \begin{eqnarray}\label{vb}
   \int_{\R} w_t (\rho_{n_0}\ast\varphi)'(\cdot-x(t))&=& \dot{x}(t) \int_{\R} w\,  (\rho_{n_0}\ast\varphi)''(\cdot-x(t)) \nonumber \\
   & = & -\dot{x}(t) \int_{\R} \partial_x w\,  (\rho_{n_0}\ast\varphi)'(\cdot-x(t))\nonumber \\
   &=& (\dot{x}(t)-c)O(\|w\|_{H^1})+ c \, O(\|w\|_{H^1})\; .
  \end{eqnarray}
  Substituting $ u $ by $ w+R $ in \eqref{CH} and using that $ R $ satisfies
  $$
  \partial_t R +(\dot{x}-c) \partial_x R + R \partial_x R +(1-\partial_x^2)^{-1} \partial_x (R^2+R_x^2/2)=0 \; ,
  $$
  we infer that $ w$ satisfies 
  $$
  w_t -(\dot{x}-c) \partial_x R = -\frac{1}{2} \partial_x \Bigl( (w+R)^2-R^2\Bigr) -(1-\partial_x^2)^{-1} \partial_x \Bigl( (w+R)^2-R^2+\frac{1}{2} ((w_x+R_x)^2-R_x^2)\Bigr) \; .
  $$
  Taking the $ L^2$-scalar product of this last equality with $(\rho_{n_0}\ast \varphi)'(\cdot-x(t)) $ and using \eqref{vb} together with \eqref{fg} we get 
  $$
  \Bigl|(\dot{x}-c) \Bigl(\int_{\R}\,  \partial_x R \partial_x(\rho_{n_0}\ast \varphi)(\cdot-x(t))+c\, O(\|w\|_{H^1})\Bigr) \Bigr|\le  O(\|w\|_{H^1})\lesssim K c\, \varepsilon_0
  $$
  and \eqref{vb0} leads, by taking $ n_0 $ large enough and possibly decreasing the value of  $ \varepsilon_0>0 $ so that $ K\varepsilon_0 \ll 1 $,  to \eqref{estc}. 
  
 It remains to prove that \eqref{unic} holds for $ n_0\ge 0 $ large enough. For this we  notice that 
 $
 \int_{\R} \varphi' \varphi'(\cdot-y) = (1-y) e^{-y}
 $
 which ensures that for $ n_0\ge 0   $ large enough
 $$
 \frac{d}{dy} \int_{\R} \varphi  (\rho_{n_0} \ast \varphi)'(\cdot-y)=\int_{\R} \varphi'  (\rho_n \ast \varphi)'(\cdot-y) \ge \frac{1}{4}e^{-\frac{1}{2}}\mbox{ on } [-1/2,1/2] \; .
 $$
 Therefore $ y\mapsto  \int_{\R} \varphi  (\rho_{n_0} \ast \varphi)'(\cdot-y) $ is increasing on $[-1/2,1/2]$ and the proof is complete.
    \subsection{Proof of Proposition \ref{propasym}}
 Let $ u_0\in Y_+ $ satisfying \eqref{stab} with $ 0<\varepsilon<\frac{c}{2^8}$. First  we recall that, on account of  \eqref{fg} and Lemma  \ref{almostdecay},  the solution $u $ to (C-H), emanating from $ u_0$,  satisfies \eqref{monoJr}-\eqref{monoJl} with $ 0\le \gamma\le c $. 
 Let $ \{t_n\} \nearrow +\infty $. 
  Since, by  \eqref{estc}, $ \{x(t_n+\cdot)-x(t_n)\} $ is uniformly equi-continuous, Arzela-Ascoli theorem ensures that there exists a subsequence
   $ \{t_{n_k}\} \subset \{t_n\} $ and $ \tilde{x}\in C(\R) $ such that for all $ T>0 $,
   \begin{equation}\label{cvx}
   x(t_{n_k}+\cdot)-x(t_{n_k}) \tendsto{t\to+\infty} \tilde{x} \mbox{ in } C([-T,T]) \; .
   \end{equation}
   Now,  since $u(t_n) $ is bounded in $ Y_+$ . There exists $ \tilde{u}_0 \in Y_+ $ and a subsequence of $ \{t_{n_k}\} $  (that we still denote by $t_{n_k} $ to simplify the notation) such that 
\begin{eqnarray}
u(t_{n_k},\cdot +x(t_{n_k})) & \rightharpoonup & \tilde{u}_0 \mbox{ in } H^1(\R) \nonumber \\
u(t_{n_k},\cdot +x(t_{n_k})) & \to & \tilde{u}_0 \mbox{ in } H^1_{loc}(\R)  \label{vc} \\  
y(t_{n_k},\cdot +x(t_{n_k})) & \rightharpoonup \! \ast  & \tilde{y}_0= \tilde{u}_0 - \tilde{u}_{0,xx}  \mbox{ in } {\mathcal M}(\R) \; .\nonumber
\end{eqnarray}
Let $ \tilde{u}\in C(\R;Y_+)  $ be  the solution to \eqref{CH}  emanating from $ \tilde{u}_0 $. 
 On account of  \eqref{cvx} and part {\bf 3.} of Proposition \ref{WP} for any $ t\in \R $,
\begin{eqnarray}
u(t_{n_k}+t,\cdot +x(t_{n_k}+t)) & \rightharpoonup & \tilde{u}(t,\cdot+\tilde{x}(t)) \mbox{ in } H^1(\R), \ \label{weakcv}\\
u(t_{n_k}+t,\cdot +x(t_{n_k}+t)) & \to & \tilde{u}(t,\cdot+\tilde{x}(t)) \mbox{ in } H^1_{loc}(\R)   \label{strongcv}\
\end{eqnarray}
Moreover, for any  function $ \phi\in C_0(\R) $, it holds 
\begin{equation}
\dist{y(t_{n_k}+t,\cdot +x(t_{n_k}+t))}{\phi} \rightarrow  \dist{\tilde{y}(t,\cdot+\tilde{x}(t))}{\phi}  \label{weakcvy}\; ,
\end{equation}
where $ \tilde{y}=\tilde{u}-\tilde{u}_{xx} $. Indeed, on one hand, it follows from  part {\bf 3.} of Proposition \ref{WP}  that 
$$
\dist{y(t_{n_k}+t,\cdot +x(t_{n_k})+\tilde{x}(t))}{\phi} \rightarrow  \dist{\tilde{y}(t,\cdot+\tilde{x}(t))}{\phi} 
$$
and on the other hand, the uniform continuity of $ \phi $ together with \eqref{cvx} ensure that 
\begin{align*}
&\dist{y(t_{n_k}+t,\cdot +x(t_{n_k})+\tilde{x}(t))-y(t_{n_k}+t,\cdot +x(t_{n_k}+t))}{ \phi}  \\
&= \dist{y(t_{n_k}+t)}{\phi(\cdot-x(t_{n_k})-\tilde{x}(t))-\phi(\cdot-x(t_{n_k}+t))}\to 0 
\end{align*}
In view of \eqref{weakcv}  we infer that $ (\tilde{u}, \tilde{x}(\cdot)) $  satisfies   \eqref{ort} and \eqref{fg} 
with the same $ \varepsilon $ than $ (u,x(\cdot))$.   Therefore,  \eqref{defep} 
 forces    $ (\tilde{u}, \tilde{x}(\cdot)) $ to satisfy \eqref{gff} and the  uniqueness result in Lemma \ref{modulation} ensures that $ \tilde{x}(\cdot) $ is a $ C^1$-function  and  satisfies \eqref{estc}.

  The proof of the $Y$-almost localization of the asymptotic object $ \tilde{u} $ will  now proceed by contradiction. Let us first explain it briefly. 
  In the sequel, for $ 0<\gamma\le \frac{2c}{3} $ fixed, we call by G the conserved quantity 
  $$
   G(u)=E(u)+\gamma \dist{u-u_{xx}}{1} \; .
   $$
    If  $\tilde{u} $ is not $Y$-almost localized then $ \tilde{u} $ loses a certain amount of $ G $ close to $\tilde{x}(t) $ between $ 0 $ and  some $ T_0>0 $. By the convergence results \eqref{strongcv}-\eqref{weakcvy} we infer that for $ n $ large enough, $ u $ loses some fixed amount  of $ G $ close to $x(t)$ between $ t_n$ and $ t_n+T_0 $. By the conservation of $G$ on the whole line and the almost monotonicity of $ J_{\gamma,l}$ and $ J_{\gamma,r}$ this ensures that for some $ R>0 $ and $ \varepsilon_0>0 $,
  $$
  J_{\gamma,l}^R \Bigl(u(t_{n_k}+T_0, \cdot +x(t_{n_k}+T_0))\Bigr)\ge J_{\gamma,l}^R\Bigl(u(t_{n_k}, \cdot+x(t_{n_k}))\Bigr) + \varepsilon_0 \; .
  $$
  But by the almost monotonicity of $ J_{\gamma,l} $, taking $ \{t_{n_k}\}\subset \{t_n\} $ such that $ n_k\ge n_0 $ and $ t_{n_{k+1}}-t_{n_k} \ge T_0 $ we get 
  $$
  J_{\gamma,l}^R\Bigl(u(t_{n_k}, \cdot+x(t_{n_k}))\Bigr) \ge J_{\gamma,l}^R\Bigl(u(t_0,\cdot +x(t_0))\Bigr)+k \varepsilon_0/2 
  $$
  which contradicts the conservation of $ G$. 

Let us now  make this proof rigorously. For $ v\in Y $ and $ R>0 $, we separate  $ G(v) $ into  two parts : 
$$
G_o^{R}(v)=\dist{v^2+v_x^2+\gamma (v-v_{xx})}{1-\Psi(\cdot+R)+\Psi(\cdot-R)}=J_{\gamma,r}^R(v)+J_{\gamma,l}^R(v) \; ,
$$
which  almost ``localizes'' outside  the ball of radius $ R $ and 
$$
G_i ^{R}(v)=\dist{v^2+v_x^2+\gamma (v-v_{xx})}{\Psi(\cdot+R)-\Psi(\cdot-R)}=G(v)-G_o^R(v) \; ,
 $$
which almost ``localizes'' inside this ball.
 We first notice that it suffices  to prove that  for all $\varepsilon>0 $, there exists $ R_\varepsilon>0 $ such that
\begin{equation}\label{po}
G_o^{R_\varepsilon}\Bigl(\tilde{u}(t,\cdot+\tilde{x}(t))\Bigr)<\varepsilon \; , \quad \forall t\in\R \, .
\end{equation}
 \noindent
 Indeed if \eqref{po} is true for some $ (\varepsilon, R_\varepsilon) $ then $ (\tilde{u},\tilde{x}) $ satisfies \eqref{defloc} with $(\varepsilon/2, 2 R_\varepsilon) $.
 As indicated above, we prove \eqref{po} by contradiction. Assuming that \eqref{po} is not true,  there exists $ \varepsilon_0>0 $ such that for any $ R>0 $ there exists $ t_R\in \R $ satisfying 
  \begin{equation}
  G_o^{R}\Bigl( \tilde{u}(t_R, \cdot+\tilde{x}(t_R))\Bigr)\ge \varepsilon_0
  \end{equation}
  Let $ R_0>0 $ such that 
   \begin{equation}
  G_o^{R_0}\Bigl( \tilde{u}(0)\Bigr)\le \frac{\varepsilon_0}{10}
  \end{equation}
  and $ K_0 e^{-R_0/6}<\frac{\varepsilon_0}{10}$.  The conservation of $G $ then forces
  $$
   G_i^{R_0} (\tilde{u}(t_{R_0}, \cdot+\tilde{x}(t_{R_0}))\le G^{R_0}_i(\tilde{u}(0))-\frac{9}{10} \varepsilon_0 \; .
  $$
 Noticing that  $ \Psi(\cdot+R) -\Psi(\cdot-R) \in C_0(\R) $,  the convergence results \eqref{strongcv}-\eqref{weakcvy} ensure that for $ k\ge k_0$ with $ k_0 $ large enough,
  $$
  G_i^{R_0} (u(t_{n_k}+t_{R_0}, \cdot+x(t_{n_k}+t_{R_0})))\le G^{R_0}_i(u(t_{n_k}, \cdot+x(t_{n_k})))-\frac{4}{5} \varepsilon_0 \; .
  $$
  We first assume that $ t_{R_0}>0 $. By \eqref{monoJr}-\eqref{monoJl} and the conservation of $G $ this ensures that 
  \begin{equation}\label{bb}
  J_{\gamma,l}^{R_0}(u(t_{n_k}+t_{R_0}, \cdot +x(t_{n_k}+t_{R_0}))) \ge  J_{\gamma,l}^{R_0}(u(t_{n_k}, \cdot +x(t_{n_k}))) +\frac{7}{10} \varepsilon_0 \; .
  \end{equation}
  Now we take a subsequence $\{t_{n_k'}\}$ of $ \{t_{n_k}\} $ such that $ t_{n_{k+1}'}-t_{n_k'} \ge t_{R_0}$ and $ n_k'\ge n_{k_0} $. From \eqref{bb} and again \eqref{monoJl},  we get that for any $ k\ge 0 $, 
  $$
  J_{\gamma,l}^{R_0}(u(t_{n_k'}, \cdot +x(t_{n_k'})) )\ge  J_{\gamma,l}^{R_0}(u(t_{n_0'}, \cdot +x(t_{n_0'})) )+\frac{3}{5} k\,  \varepsilon_0 \tendsto{k\to +\infty} +\infty \; 
  $$
  that contradicts the conservation of $ G $ and thus proves the $Y$-almost localization of $\tilde{u} $.
  Finally, if $ t_{R_0} <0 $, then for $ k\ge k_0 $ such that  $ t_{n_k}>|t_{R_0}| $  we get in the same way 
  $$
   J_{\gamma,r}^{R_0}\Bigl(u(t_{n_k}, \cdot +x(t_{n_k}))\Bigr)\le  J_{\gamma,r}^{R_0}\Bigl(u(t_{n_k}-|t_{R_0}|, \cdot +x(t_{n_k}-|t_{R_0}|))\Bigr) -\frac{7}{10} \varepsilon_0 \; .
   $$
   As above, this implies  the existence of a subsequence $\{t_{n_k'}\}$ of $ \{t_{n_k}\} $ such that
   $$
  J_{\gamma,r}^{R_0}(u(t_{n_k'} \cdot +x(t_{n_k'})) )\le  J_{\gamma,r}^{R_0}(u(t_{n_0'}, \cdot +x(t_{n_0'}))) -\frac{3}{5} k\,  \varepsilon_0 \tendsto{k\to +\infty} -\infty \; .
  $$
   which also leads to a contradiction.\vspace*{2mm}\\
  {\bf Acknowledgements :} 
  The author is very grateful to Professor Yvan Martel for his encouragements, his careful reading of a first version of the manuscript  and having suggested the proof of   Lemma \ref{yvan}. He also thank Professor Masaya Maeda for pointing out a flaw in the version of Lemma \ref{modulation} stated in \cite{EL2}
   as well as  the anonymous Referees for valuable remarks.
Finally, the author  gratefully  acknowledges the hospitality and support of IHES, where part of this work was done during the program on {\it nonlinear waves} in spring 2016.
\vspace{3mm}\\
\noindent
 {\bf Conflict of Interest }: The author declares that he has no conflict of interest.


\begin{thebibliography}{20}
\bibitem{AL} {\sc B. Alvarez-Samaniego and D. Lannes},  Large time existence for 3D water-waves and asymptotics, {\em Invent. Math.  {\bf 171}  (2009), 165--186.}

\bibitem{Beals0} {\sc R.~Beals, D.H.~Sattinger and J.~Szmigielski},  Multi-peakons and the classical moment problem, {\em Adv. Math.  {\bf 154}  (2000),  no. 2, 229--257.}

\bibitem{Benjamin} {\sc T. B. Benjamin}, {\em The stability of solitary waves.
Proc. Roy. Soc. London Ser. A {\bf 328} ($1972$), $153-183$.}
\bibitem{Bourbaki}
{\sc N. Bourbaki,}
El\'ements de Math\'ematique, Int\'egration, Chapitre 9, Herman Paris 1969.

\bibitem{BCZ}
{\sc A. Bressan, G. Chen, and Q. Zhang},
 Uniqueness of conservative solutions to the Camassa-Holm equation via characteristics, 
 {\em Discr. Cont. Dyn. Syst. {\bf 35} (2015), 25?42.}
\bibitem{BC1}
{\sc  A.~Bressan and A.~Constantin},
 Global conservative solutions of the Camassa-Holm equation, {\em
 Arch. Rational Mech. Anal.
{\bf 187} (2007), 215--239.}
\bibitem{BC2}
{\sc A.~Bressan and A.~Constantin},
 Global dissipative solutions of the Camassa-Holm equation, {\em
 Analysis and Applications
{\bf 5} (2007), 1--27.}

\bibitem{CCCS}
{\sc H. Cai, G. Chen, R.M. Chen and Y. Shen},
  {\em Lipschitz metric for the Novikov equation, arXiv:1611.08277.}

\bibitem{CH1} {\sc R. Camassa and D. Holm},
An integrable shallow water equation with peaked solitons, {\em
Phys. rev. Lett. {\bf 71} (1993), 1661--1664.}

\bibitem{CH2} {\sc R. Camassa, D. Holm and J. Hyman},
An new integrable shallow water equation, {\em Adv. Appl. Mech.
{\bf 31} (1994).}

\bibitem{C}
 {\sc A. Constantin },
 Existence of permanent and breaking waves for a shallow water equations: a geometric approach,
  {\em  Ann. Inst. Fourier {\bf 50} (2000), 321-362.}
  
\bibitem{C1}
 {\sc A. Constantin },
On the scattering problem for the Camassa-Holm equation, {\em  Proc. Roy. Soc. London Ser. A . {\bf 457} (2001), 953-970.}



\bibitem{CE1}
{\sc A.~Constantin and J.~Escher},
 Global existence and blow-up
for a shallow water equation, {\em Annali Sc. Norm. Sup. Pisa
{\bf 26} (1998), 303--328.}


\bibitem{CGI}
 {\sc A. Constantin,  V. Gerdjikov and R. Ivanov},
Inverse scattering transform for the Camassa-Holm equation, {\em  Inverse problems  {\bf 22} (2006), 2197-2207.}


\bibitem{CL}
{\sc A.~Constantin and D.~Lannes},
The hydrodynamical relevance of the Camassa-Holm and Degasperis-Procesi Equations {\em Arch. Rat. Mech. Anal., {\bf 192} (2009), 165--186.}


\bibitem {CK} {\sc A. Constantin and B. Kolev},
 Geodesic flow on the diffeomorphism group of the circle, {\em  Comment. Math. Helv. {\bf 78}
(2003), 787-804.}


\bibitem{CM1} {\sc A. Constantin and L. Molinet},
  Global weak solutions for a shallow water equation,
 {\em Comm. Math. Phys. {\bf 211} (2000), 45--61.}
%

\bibitem {CS1} {\sc A. Constantin and W. Strauss},
Stability of peakons, {\em  Commun. Pure Appl. Math. {\bf 53}
(2000), 603-610.}






\bibitem{eck}{\sc J. Eckhardt, and G.Teschl}
On the isospectral problem of the dispersionless Camassa-Holm equation,
{\em Adv. Math. {\bf 235} (2013), 469--495.}



\bibitem{EM}{\sc K. El Dika and Y. Martel},
Stability of $ N $ solitary waves for the generalized BBM equations,
 {\em Dyn. Partial Differ. Equ. {\bf 1} (2004), 401-437.}

 
 \bibitem{EL2}{\sc K. El Dika and L. Molinet},
Stability of multipeakons, {\em  Ann. Inst. H. Poincar\'e Anal. Non Lin\'eaire {\bf 26} (2009), no. 4, 1517-1532.}


\bibitem{EL3}{\sc K. El Dika and L. Molinet},
Stability of train of anti-peakons -peakons , {\em  Discrete Contin. Dyn. Syst. Ser. B {\bf 12} (2009), no. 3, 561--577.}


 \bibitem{GSS} {\sc M. Grillakis, J. Shatah and W. Strauss},
Stability theory of solitary waves in the presence of symmetry,
 {\em J. Funct. Anal. {\bf 74} (1987), 160-197.}

\bibitem{Ifti}
{\sc D.  Iftimie}, 
 Large time behavior in perfect incompressible flows,
{\em  Partial differential equations and applications,119--179, S\'emin. Congr., \textbf{15}, Soc. Math. France, Paris, 2007.}
 


 \bibitem{Johnson} {\sc R.S. Johnson},
 Camassa-Holm, Korteweg-de Vries and related models for water waves, {\it J. Fluid Mech. {\bf 455}
(2002), 63--82.}

\bibitem{K1} {\sc B. Kolev},
Lie groups and mechanics: an introduction, 
  {\em J. Nonlinear Math. Phys. {\bf 11}
(2004), 480--498.}

\bibitem{K2} {\sc B. Kolev},
Poisson brackets in hydrodynamics, 
  {\em Discrete Contin. Dyn. Syst. {\bf 19}
(2007), 555--574.}


\bibitem{MMT} {\sc Y. Martel, F. Merle and T-p. Tsai}
Stability and asymptotic stability in the energy space of the sum of $N$ solitons for subcritical gKdV equations. {\em  Comm. Math. Phys.   {\bf 231}  (2002),  347--373.}

\bibitem{MM1}{\sc  Y. Martel and F. Merle}
 Asymptotic stability of solitons for subcritical generalized KdV equations. 
 {\em Arch. Ration. Mech. Anal. {\bf 157} (2001), no. 3, 219-254.}
 
\bibitem{MM2}{\sc  Y. Martel and F. Merle}
Asymptotic stability of solitons of the gKdV equations with general nonlinearity. 
{\em Math. Ann. {\bf 341} (2008), no. 2, 391-427.}

\bibitem{L}{\sc L. Molinet} On well-posedness results for Camassa-Holm equation on the line: a survey. {\em J. Nonlinear Math. Phys.   {\bf 11}  (2004),  521--533.}

 





\end{thebibliography}
\end{document}